\documentclass[DIV=15]{scrartcl}

\usepackage{lmodern}
\usepackage{amsmath,amssymb,amsfonts,amsthm}
\usepackage{mathtools}
\usepackage[numbers,sort&compress]{natbib}

\usepackage{algorithm}
\usepackage{algpseudocode}

\algrenewcommand\algorithmicwhile{\textbf{While}}
\algrenewcommand\algorithmicfor{\textbf{For}}
\algrenewcommand\algorithmicdo{\textbf{Do}}
\algrenewcommand\algorithmicif{\textbf{If}}
\algrenewcommand\algorithmicthen{\textbf{Then}}
\algrenewcommand\algorithmicelse{\textbf{Else}}
\algrenewcommand\algorithmicend{\textbf{End}}
\algrenewcommand\algorithmicreturn{\textbf{Return}}

\usepackage{hyperref}
\hypersetup{
  pdfauthor={Zijia Li, Hans-Peter Schröcker, Daniel F. Scharler},pdftitle={A Complete Characterization of Bounded Motion Polynomials Admitting a Factorization with Linear Factors},}

\newcommand{\D}{\mathbb{D}}
\renewcommand{\DH}{\mathbb{DH}}
\renewcommand{\H}{\mathbb{H}}

\newcommand{\R}{\mathbb{R}}
\newcommand{\N}{\mathbb{N}}
\newcommand{\adef}{\leftarrow}

\newcommand{\Clifford}[1]{C\kern-0.12em\ell_{(#1)}}

\newcommand{\evenClifford}[1]{C\kern-0.12em\ell^+_{(#1)}}

\newcommand{\qi}{\mathbf{i}}
\newcommand{\qj}{\mathbf{j}}
\newcommand{\qk}{\mathbf{k}}
\newcommand{\eps}{\varepsilon}
\newcommand{\SO}[1][3]{\mathrm{SO}(#1)}
\newcommand{\SE}[1][3]{\mathrm{SE}(#1)}

\newcommand{\Cj}[1]{{#1}^\ast}

\newcommand{\lconcat}[2]{[#1,#2]}

\newcommand{\QNorm}[1]{\nu(#1)}

\DeclareMathOperator{\mrpf}{mrpf}
\DeclareMathOperator{\lgcd}{lgcd}
\DeclareMathOperator{\rgcd}{rgcd}
\DeclareMathOperator{\mygcd}{realgcd}

\DeclareMathOperator{\lquo}{lquo}
\DeclareMathOperator{\rquo}{rquo}
\DeclareMathOperator{\lrem}{lrem}
\DeclareMathOperator{\rrem}{rrem}
\DeclareMathOperator{\czero}{czero}

\newtheorem{thm}{Theorem}
\newtheorem{lem}{Lemma}
\newtheorem{prop}{Proposition}
\newtheorem{cor}{Corollary}
\theoremstyle{definition}
\newtheorem{defn}{Definition}
\theoremstyle{remark}

\newtheorem{example}{Example}

\setkomafont{title}{\normalfont\bfseries}
\setkomafont{sectioning}{\normalfont\bfseries}

\title{Motion Polynomials Admitting a Factorization with Linear Factors}
\author{Zijia Li, Hans-Peter Schröcker, Mikhail Skopenkov, Daniel F. Scharler}

\begin{document}

\begin{center}
  \makeatletter
  {\bfseries\huge \@title}
  \par\bigskip
  \Large
  Zijia Li\footnotemark[1]\quad
  Hans-Peter Schröcker\footnotemark[2]\quad
  Mikhail Skopenkov\footnotemark[3]\quad
  Daniel F. Scharler\footnotemark[2]\textsuperscript{,}\footnotemark[4]
  \par\bigskip
  \large
  \begin{minipage}{0.7\linewidth}
    \centering
    \textsuperscript{1}KLMM, Academy of Mathematics and Systems Science, Chinese Academy of Sciences, Beijing, China
  \end{minipage}
  \par\bigskip
  \begin{minipage}{0.7\linewidth}
    \centering
    \textsuperscript{2}Department of Basic Sciences in Engineering Sciences, University of Innsbruck, Innsbruck, Austria
  \end{minipage}
  \par\bigskip
  \begin{minipage}{0.7\linewidth}
    \centering
    \textsuperscript{3}CEMSE, King Abdullah University of Science and
    Technology, Saudi Arabia
  \end{minipage}
  \makeatother
\end{center}

\footnotetext[4]{Daniel F. Scharler greatly contributed to an earlier version of this manuscript. He tragically passed away on April 12, 2022 at the age of only~29.}

\begin{abstract}
  Motion polynomials (polynomials over the dual quaternions with nonzero
  real norm) describe rational motions. We present a necessary and sufficient
  condition for reduced bounded motion polynomials to admit factorizations into
  monic linear factors, and we give an algorithm to compute them. We can use those
  linear factors to construct mechanisms because the factorization corresponds
  to the decomposition of the rational motion into simple rotations or
  translations. Bounded motion polynomials always admit a factorization into
  linear factors after multiplying with a suitable real or quaternion polynomial.
  Our criterion for factorizability allows us to improve on earlier algorithms
  to compute a suitable real or quaternion polynomial co-factor.
 \end{abstract}

\section{Introduction}
\label{sec:introduction}

The factorization theory of polynomials over division rings was developed a century ago in \cite{gordon65,niven41,ore33}. It gained new attention recently because relations to mechanism theory were unveiled \cite{frischauf23, gallet16, hegedus13,juettler93, li15survey,li15darboux,li2016spatial,li19jsc,li17,rad18,scharler17,schicho2022and}. Quaternion polynomials parameterize rational spherical motions, that is, motions in $\SO$ with only rational trajectories. For describing motions in $\SE[2]$ or $\SE[3]$, one can use dual quaternion polynomials whose coefficients are subject to a quadratic condition (the ``Study condition'' that the norm polynomial is real and nonzero; cf.~Definition~\ref{def-motion}). The factorization theory of these so-called \emph{motion polynomials} turned out to be more complicated and, arguably, more interesting as well \cite{hegedus13,li19,li19jsc}.

The first publication on motion polynomial factorization is \cite{hegedus13}. There, the authors showed that a ``generic'' monic motion polynomial could be written in finitely many ways as a product of monic linear motion polynomials. They also related these factorizations to a decomposition of rational motions into coupled rotations and converted them into the construction of mechanical linkages. For instance, using different factorizations of a cubic motion polynomial, the authors in \cite{hegedus13} found a new family of paradoxically mobile closed-loop linkages with six revolute axes. Notice that the family contained the well-known Bricard Octahedra of type three \cite{gallet2021bricard} as a sub-class. Computational design issues were the topic of \cite{hegedus15}. In \cite{gallet16,li17}, the factorization of bounded motion polynomials was used for the construction of rational Kempe linkages. The potential for applications in architectural design is studied in \cite{li2020invertible}. Moreover, in \cite{liu2021structure}, it was used to synthesize single-loop variable-degree-of-freedom mechanisms.

It is easy to give examples of ``non-generic'' motion polynomials that admit infinitely many or no factorization. In \cite{li19jsc}, the authors showed that for any \emph{bounded} motion polynomial $M$ (cf.~Definition~\ref{def:generic-bounded} below; the name refers to the bounded trajectories of the underlying rational motion), there exists a real polynomial $S$ such that $MS$ admits a factorization. From the viewpoint of kinematics, this is an attractive property as $M$ and $MS$ describe the same rational motion. A variant of this theme can be found in \cite{li17} where the factorizability of $MT$ with a quaternion polynomial $T$ is studied to construct linkages to mechanically ``draw'' a prescribed curve.

The authors of \cite{li17,li19jsc} provide algorithms for computing the co-factors $S$ and $T$ but fail to characterize all cases where $S = T = 1$ is sufficient, that is, a direct factorization of $M$ is possible. While their algorithms will return $S = 1$ or $T = 1$ for many motion polynomials, a return value different from $1$ does not imply the non-existence of a factorization (cf.~the example in Section~\ref{sec:example}). In this article, we fill this gap for bounded motion polynomials by proving the following theorem.

\begin{thm}
  \label{th:bounded}
  Consider a bounded monic motion polynomial $M = cQ + \eps D \in
  \DH[t]$ with $c \in \R[t]$, $Q \in \H[t]$, $D \in \H[t]$ such that both $M$ and $Q$ have
  only constant real polynomial factors. Define $g$ to be the greatest common
  real polynomial divisor of the quaternion polynomials $c$, $\Cj{Q}D$, and
  $D\Cj{Q}$. The polynomial $M$ admits a factorization with monic linear motion
  polynomial factors if and only if $cg$ divides~${D}\Cj{D}$.
\end{thm}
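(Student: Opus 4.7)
Both directions will be proved by induction on $\deg M$, using as the principal tool the right remainder $M \bmod p$ for irreducible real quadratic divisors $p$ of the norm $M\Cj{M} = c^2 Q\Cj{Q}$. Since $M$ is bounded, every irreducible real factor of this norm is of the form $p = (t-z)(t-\bar z)$ with $z \in \C\setminus\R$. The analysis splits according to whether $p \mid c$ (the translation case) or $p \mid Q\Cj{Q}$ while $p \nmid c$ (the rotation case).

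For \textbf{sufficiency}, assume $cg \mid D\Cj{D}$. In the rotation case, the standard motion-polynomial factorization algorithm of \cite{hegedus13} applies directly: $M \bmod p$ is a genuine linear dual quaternion polynomial whose leading coefficient is invertible, from which a right factor $t-h$ with $h = h_0 + \eps h_1$ and $h_0 \notin \R$ is recovered. In the translation case the primal part vanishes modulo $p$, so $M \equiv \eps D \pmod p$; the hypothesis $cg \mid D\Cj{D}$ is exactly what guarantees that $D \bmod p$ is a non-constant linear polynomial producing a valid translation-like linear right factor $t - \eps h_1$. After right-dividing by the chosen factor, I would rewrite the quotient in reduced form $M' = c'Q' + \eps D'$ (so that $M'$ and $Q'$ again have only constant real factors) and close the induction by verifying $c'g' \mid D'\Cj{D'}$. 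For \textbf{necessity}, assume $M$ admits a linear factorization; each factor contributes an irreducible quadratic either to $c$ (translation-like) or to $Q\Cj{Q}$ (rotation-like). Tracking the dual-part coefficients through the factorization, together with the Study condition $Q\Cj{D} + D\Cj{Q} = 0$ and the identity $\Cj{Q}D \cdot \Cj{D}Q = (Q\Cj{Q})\,D\Cj{D}$, should yield the required divisibility.

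The \textbf{main obstacle} is the bookkeeping in the inductive step of sufficiency: after extracting a linear factor, re-expressing $M'$ in reduced form and checking that the new data still satisfy $c'g' \mid D'\Cj{D'}$. Because $\DH[t]$ is non-commutative, right division couples the primal and dual parts, so the updates to $c$, $\Cj{Q}D$, and $D\Cj{Q}$ under right division by a linear motion polynomial are delicate; the left--right asymmetry is precisely why $g$ is defined using \emph{both} $\Cj{Q}D$ and $D\Cj{Q}$ rather than just one of them. The most demanding situation is when the chosen $p$ divides $g$ itself---so $p$ divides $c$, $\Cj{Q}D$, and $D\Cj{Q}$ simultaneously---and the multiplicities of $p$ in each of these, as well as in $D\Cj{D}$, must be coordinated carefully to ensure the induction proceeds without obstruction.
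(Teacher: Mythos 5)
Your overall plan---induction on degree, extracting one linear right factor at a time, and splitting the analysis according to whether the chosen irreducible quadratic $p$ divides $c$---is the same strategy as the paper's second proof (Section~\ref{ap:simpler}), and your ``main obstacle'' paragraph correctly identifies where the real work lies. However, the central step of your sufficiency argument in the ``translation case'' is wrong in a way that cannot be repaired by more careful bookkeeping.

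You claim that when $p\mid c$ (so $p$ divides all of $P$), the reduction $M\equiv \eps D\pmod p$ yields a ``translation-like linear right factor $t-\eps h_1$.'' No such factor can exist for a bounded $M$: if $t-\eps h_1$ is a linear motion polynomial then $h_1$ is vectorial and $\nu(t-\eps h_1)=t^2$, which has a real root, whereas $\nu(M)=c^2 Q\Cj{Q}$ is assumed to have none. More concretely, when $p\mid P$ the remainder $M\bmod p$ has the form $\eps R$ with $R\in\H[t]$, and its leading coefficient $\eps r_1$ is a zero divisor in $\DH$, so $\czero(M,p)$ is not defined and the generic algorithm simply does not apply; there is no way to read off a linear right factor from this remainder. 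The paper's inductive proof handles this case entirely differently: from $N\mid c$ and $c\mid \nu(D)$ it invokes Lemma~\ref{lem:g-a-b-lemma} to obtain a \emph{left} factor $t-p$ with $p\notin\R$ and $\nu(t-p)=N$ dividing \emph{both} $P$ and $D$, so the extracted linear factor $t-p-\eps q$ is a rotation, not a translation, and one then has to choose the purely dual correction $q$ carefully (in one sub-case $q=0$ works, in the other a nonzero $q$ with $\Cj{p}q=qp$ is needed, which amounts to conjugating by $1+\eps v$). Without an analogue of that lemma and that two-case choice of $q$, your induction does not get off the ground in exactly the situation you flag as hardest, namely $p\mid g$. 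The necessity direction is left as a one-sentence hope (``should yield the required divisibility''); the paper's proof of that direction is a genuine induction with its own case split on whether $Q_1$ is right-divisible by $t-\Cj{p}$ and a careful comparison of multiplicities, and the identity $\Cj{Q}D\cdot\Cj{D}Q=(Q\Cj{Q})(D\Cj{D})$ by itself does not produce it.
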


We present two proofs of the theorem, both relying on the preliminaries in Section~\ref{sec:preliminaries}. The first proof is given in Section~\ref{sec:factorcondition}.
The necessity of the factorization condition is Proposition~\ref{prop:final-characterization}, and the sufficiency is Proposition~\ref{prop:reverse-final} there. 
The proof of the latter is divided into two major steps. First, we decompose the motion polynomial into a product of particular motion polynomials. They are not necessarily linear, but their norm polynomials are \emph{primary}, that is, powers of an irreducible (over $\R$) real polynomial.
Subsequently, we demonstrate that the factors of primary norms can be written as products of linear motion polynomials. This proof structure is also reflected in the structure of the paper and our algorithmic interpretation. After treating some aspects of the simple unbounded case in Section~\ref{sec:unbounded},
we decompose an arbitrary bounded motion polynomial into factors of a primary norm in Section~\ref{sec:primary-decomposition}. We consider the case of a primary norm in Section~\ref{sec:non-generic}.
The central results are Propositions~\ref{prop:singlepattern}--\ref{prop:alg_singlepattern}. The transition from the primary norm case to the general case is done in Section~\ref{sec:general}. The algorithm for computing factors with primary norm is Algorithm~\ref{alg:mgfactorization}, and the algorithm for decomposing polynomials with primary norm into linear factors is Algorithm~\ref{alg:singlepattern}. The combination of these algorithms yields a general method for factorizing bounded motion polynomials into linear factors, provided such a factorization exists. 

An alternative proof by induction is presented in Section~\ref{ap:simpler}. The second proof has the advantage of being
 simpler, but the first proof reveals more about the structure of the problem and also allows to extract a statement on factorizability with not-necessarily linear
 factors (Proposition~\ref{prop:alg_mgfactor}). These also have a kinematic relevance \cite{siegele23}.

\section{Preliminaries}
\label{sec:preliminaries}

The real associative algebra $\H$ of quaternions is commonly used to parameterize the special orthogonal group $\SO$. For a given quaternion $q = q_0 + q_1\qi + q_2\qj + q_3\qk$, we denote by $\Cj{q} \coloneqq q_0 - q_1\qi - q_2\qj - q_3\qk$ its \emph{conjugate} and by $\QNorm{q} \coloneqq q\Cj{q} = q_0^2 + q_1^2 + q_2^2 + q_3^2 \in \R$ its \emph{norm} (this is a convenient terminology in algebra; beware that in the other literature, $\sqrt{\QNorm{q}}$ is often called the norm of $q$). We say a quaternion $q$ is \emph{vectorial} if $q_0=0$, and in this case we have $\Cj{q}=-q$. Embed vector space $\R^3$ into $\H$ via $(x_1,x_2,x_3) \mapsto x \coloneqq x_1\qi + x_2\qj + x_3\qk$. Then the map
\begin{equation}
  \label{eq:SO}
  x \mapsto y = \frac{q x \Cj{q}}{q\Cj{q}}
\end{equation}
is the rotation of $x$ about $q_1\qi + q_2\qj + q_3\qk$ through the angle $\varphi = 2\arccos(q_0/\sqrt{\QNorm{q}})$, degenerating to the identity map for $q\in\mathbb{R}$. This assumes, of course, that $q \neq 0$. Note that nonzero scalar multiples of $q$ describe the same map \eqref{eq:SO} and we obtain an isomorphism from $\H \setminus \{0\}$ modulo the real multiplicative group $\R^\times$ to~$\SO$.

In order to construct a similar isomorphism into $\SE = \SO \ltimes \R^3$, we adjoin the element $\eps$ with $\eps^2 = 0$ to $\H$, where $\eps$ commutes with everything, thus arriving at the algebra $\DH$ of \emph{dual quaternions.} Formally, $\DH:=\H[\varepsilon]/(\varepsilon^2)$. A dual quaternion can be written as $h = p + \eps d$ where $p$ and $d$, the \emph{primal} and the \emph{dual parts}, are quaternions. The \emph{conjugate} of a dual quaternion $h$ is $\Cj{h} = \Cj{p} + \eps \Cj{d}$, and we will frequently make use of the formula $\Cj{(hk)} = \Cj{k}\Cj{h}$. The \emph{norm} $\QNorm{h} \coloneqq h\Cj{h} = \QNorm{p} + \eps(p\Cj{d} + d\Cj{p}) \in \D \coloneqq \R[\varepsilon]/(\varepsilon^2)$
turns out to be a dual number but it is customary in kinematics to require a real norm $\QNorm{h}$, i.e., $p\Cj{d} + d\Cj{p} = 0$. This is the famous \emph{Study condition} \cite{Study1891bewegungen}.

The \emph{$\eps$-conjugate} of $h$ is $h_\eps \coloneqq p - \eps d$. The embedding of $\R^3$ into $\DH$ is now done via $(x_1,x_2,x_3) \mapsto 1 + \eps(x_1\qi + x_2\qj + x_3\qk) = 1 + \eps x$, whence  for $h\Cj{h}\in \R \setminus \{0\}$ the map
\begin{equation}
  \label{eq:SE}
  1 + \eps x \mapsto 1 + \eps y = \frac{h_\eps (1 + \eps x) \Cj{h}}{h\Cj{h}}
\end{equation}
describes an element of $\SE$ and gives an isomorphism
\begin{equation}
  \label{eq:dh-isomorphism}
  \{ h \in \DH \mid h\Cj{h} \in \R \setminus \{0\}\} / \R^\times \to \SE,
\end{equation}
cf.~\cite{husty12}.

We now consider the algebra $\DH[t]$ of polynomials in the indeterminate $t$ with coefficients in $\DH$ and the sub-algebra $\H[t]$ with quaternion coefficients. Multiplication is defined by the requirement that $t$ commutes with all dual quaternions. This is only one possibility among many \cite{ore33}. It is appropriate in kinematics because we think of $t$ as a real motion parameter and $\R$ is in the center of $\DH$. For $M \in \DH[t]$ we denote by $\Cj{M}$ and $M_\eps$ the polynomials obtained by conjugating or $\eps$-conjugating its coefficients, respectively, and by $\QNorm{M}:=M\Cj{M} \in \D[t]$ its \emph{norm polynomial}. There exist unique polynomials $P$, $D \in \H[t]$ such that $M = P + \eps D$. They are called the \emph{primal} and the \emph{dual part} of $M$, respectively.

\begin{defn} \label{def-motion}
  A polynomial $M \in \DH[t]$ is called a \emph{motion polynomial,} if $M\Cj{M} \in \R[t] \setminus \{0\}$.
\end{defn}

The motivation for this definition is \eqref{eq:dh-isomorphism} and the possibility for a parametric version of map~\eqref{eq:SE}:
\begin{equation}
  \label{eq:rational-motion}
  1 + \eps x \mapsto \frac{M_\eps (1 + \eps x) \Cj{M}}{M\Cj{M}} =
  1 + \eps \frac{Px\Cj{P} + P\Cj{D} - D\Cj{P}}{P\Cj{P}}.
\end{equation}
The right-hand side of \eqref{eq:rational-motion} is a rational parametric curve. Any rigid body motion with the property of having exclusively rational trajectories can be described by a suitable motion polynomial which is unique up to multiplication with real polynomials \cite{juettler93}.

The main contribution of this article is a simple necessary and sufficient condition for a motion polynomial $M$ of degree $n$ to have a decomposition into monic linear motion polynomial factors, i.e.,
\begin{equation}
  \label{eq:mp-factorization}
  M = m_n(t - h_1)(t - h_2) \cdots (t - h_n)
\end{equation}
with the leading coefficient $m_n \in \DH$ and linear motion polynomials $t - h_i$, $i \in \{1,2,\ldots,n\}$. A linear motion polynomial $t-h$ parametrizes a rotation or translation. The rotation axis or the translation direction are defined by $h$ \cite{hegedus13}. It is no loss of generality, possibly after a rational re-parametrization, to assume that the leading coefficient $m_n$ is invertible \cite{li16}. Since multiplying with $m_n^{-1}$ does not change relevant kinematic or algebraic properties, we will only consider \emph{monic motion polynomials} in the following. Factorizations of form~\eqref{eq:mp-factorization} are interesting as they correspond to decompositions of rational motion \eqref{eq:rational-motion} into coupled rotations around fixed axes or, in exceptional cases, translations along fixed lines. This is the algebraic foundation for the linkage constructions of \cite{hegedus13,hegedus15,li2020invertible,li17,liu2021structure}.

We denote by $\mrpf(A,B)$ the greatest common real monic polynomial divisor of $A$ and~$B$. In addition, 
we define $\mrpf(M):= \mrpf(M,M)$ and $\mrpf(0):=1$. In case of $\mrpf(M) = 1$, we say
that $M$ is \emph{reduced.}

\begin{defn}
  \label{def:generic-bounded}
  A monic reduced motion polynomial $M = P + \eps D$ is \emph{generic} if $\mrpf(P) = 1$. It is called \emph{bounded} if $\mrpf(P)$ has no real zero, and \emph{unbounded} otherwise.
\end{defn}

From \eqref{eq:rational-motion} we see that the trajectories of a bounded motion polynomial are bounded rational curves while they are unbounded otherwise.

We also see that a criterion for the factor $t - h_i$ in \eqref{eq:mp-factorization} to describe a translation is the reality of its primal part. Linear factors of this type exist only for unbounded $M$. More generally, a motion polynomial $M = P + \eps D$ is called \emph{translational} if $P \in \R[t]$.

The usual division in $\DH[t]$ with remainders is possible if the divisor has an invertible leading coefficient \cite[Theorem~1]{li19}. More precisely, given $A$, $B \in \DH[t]$ with the leading coefficient of $B$ being invertible, there exist unique polynomials $Q_l$, $Q_r$, $R_l$, and $R_r$ such that
\begin{equation*}
  A = Q_lB + R_l,
  \quad A = BQ_r + R_r,
  \quad\text{and}\quad \deg R_l,\deg R_r<\deg B.
\end{equation*}
They are called \emph{left/right quotient} and \emph{left/right remainder,}
respectively. We denote them by $\lquo(A,B) \coloneqq Q_l$, $\rquo(A,B)
\coloneqq Q_r$, $\lrem(A,B) \coloneqq R_l$, and $\rrem(A,B) \coloneqq R_r$.
Moreover, if $R_l = 0$ ($R_r = 0$), we say that $Q_l$ ($Q_r$) is a left (right)
\emph{factor}, or \emph{divisor}, of $A$. For nonzero quaternion polynomials $A$ and $B$, the leading coefficient is always invertible; the unique monic left/right common divisor of the maximal degree for $A$ and
$B$ is denoted by $\lgcd(A, B)$ and $\rgcd(A, B)$, respectively. It is a 
quaternion polynomial and can be computed by a non-commutative version of the
Euclidean algorithm; see~\cite{li16,ore33} for more details. We say that a real polynomial $F\in\R[t]$ \emph{divides} a motion polynomial $M\in\DH[t]$ and write $F | M$ if $F$ is a factor of $M$.

We are going to use the following two folklore results which are related to references but are stated differently; cf.~\cite[Lemma~2.2]{li23abc}, \cite[Proposition~2.1]{cheng16}, \cite[Lemma~5]{li19jsc}, and \cite[Lemma~1]{li23}.
It is worth pointing out that the statement on quaternion polynomials is related to the one on complex quaternions if one replaces the parameter $t$ by the complex imaginary unit and takes the quadratic polynomial $N=t^2+1$, w.l.o.g. 

\begin{lem}[ABC-lemma]
  \label{lem:a-b-c-lemma}
  If a quadratic irreducible polynomial $N \in \R[t]$ divides the product $ABC$ of polynomials $A,B,C\in \H[t]$ and $N$ divides $B\Cj{B}$, then $N$ divides $AB$ or $BC$. 
\end{lem}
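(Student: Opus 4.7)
The plan is to pass to the quotient ring. Since $N$ is a monic quadratic polynomial irreducible over $\R$, the quotient $\R[t]/(N)$ is a field isomorphic to $\C$, and therefore $R := \H[t]/(N) \cong \H \otimes_\R \C$ is the algebra of complex biquaternions. Because $N$ lies in the center of $\H[t]$, divisibility by $N$ is equivalent to vanishing in $R$, and quaternion conjugation descends to $R$. Writing $a, b, c \in R$ for the residues of $A, B, C$, the hypotheses translate to $abc = 0$ and $b\Cj{b} = 0$, and the goal becomes to show $ab = 0$ or $bc = 0$.

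Next I would invoke the classical isomorphism $\H \otimes_\R \C \cong M_2(\C)$, under which quaternion conjugation corresponds to the classical adjugate and the reduced norm $q\Cj{q}$ corresponds to $\det(q)\cdot I$. The assumption $b\Cj{b} = 0$ then forces $\det b = 0$, so $b$ is a singular $2\times 2$ complex matrix of rank at most one.

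If $b = 0$ the conclusion is immediate. Otherwise $b$ has rank exactly one and admits an outer-product factorization $b = u v^\tp$ with nonzero $u, v \in \C^2$. Then
\begin{equation*}
  abc = a(uv^\tp)c = (au)(v^\tp c)
\end{equation*}
is the outer product of the column $au$ with the row $v^\tp c$, and it vanishes iff $au = 0$ or $v^\tp c = 0$. In the first case $ab = (au)v^\tp = 0$, in the second $bc = u(v^\tp c) = 0$; lifting back to $\H[t]$ yields $N \mid AB$ or $N \mid BC$, respectively.

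The main obstacle will be to verify cleanly that the isomorphism $\H \otimes_\R \C \cong M_2(\C)$ carries the quaternion conjugation to the matrix adjugate, so that $q\Cj{q} = \det(q)\cdot I$; this is standard but merits either a reference or a short explicit check. Once it is in place, the rank-one outer product computation above finishes the argument immediately. An alternative without matrices would be to work directly with left/right annihilator ideals of $b$ in the simple algebra $R$, but the matrix model appears more transparent.
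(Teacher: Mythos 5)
Your proof is correct, and it takes a genuinely different route from the paper's. The paper stays inside $\H[t]$ and argues by division with remainders: writing $B = B_1 N + B_2$, $A = A_1\Cj{B_2} + A_2$, $C = \Cj{B_2}C_1 + C_2$ with $A_2, C_2 \in \H$ constant, it observes that $N \mid B_2\Cj{B_2}$ forces $ABC \equiv A_2 B_2 C_2 \pmod N$, a polynomial of degree $\le 1$, which must therefore vanish; since $\H[t]$ has no zero divisors and $B_2 \neq 0$, either $A_2 = 0$ (giving $N \mid AB$) or $C_2 = 0$ (giving $N \mid BC$). You instead pass to the quotient $\H[t]/(N) \cong \H \otimes_\R \C \cong M_2(\C)$, identify quaternion conjugation with the adjugate so that the norm becomes the determinant, and reduce the claim to the fact that a rank-one complex matrix $b = uv^\tp$ kills $abc = (au)(v^\tp c)$ precisely when $au = 0$ or $v^\tp c = 0$. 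Both are valid; the paper's argument is more elementary and self-contained, while yours is shorter conceptually once the matrix model is in place and makes the underlying simple-algebra structure explicit. In fact the authors gesture exactly at your viewpoint in the sentence preceding the lemma (relating the statement to complex quaternions via $N = t^2+1$), but they chose not to develop it and gave the division-with-remainders proof instead. One small remark: your appeal to the outer-product factorization already subsumes the $b = 0$ case if you allow $u$ or $v$ to vanish, so the case split is optional; and the fact that conjugation goes to the adjugate follows from the explicit $2\times 2$ representation of $\H$, as you anticipated in your final paragraph.
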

\begin{proof}
     Assume that $N$ does not divide $B$; otherwise, there is nothing to prove. Using division with remainders, we find $A_1,B_1,B_2,C_1\in\H[t]$ and $A_2,C_2\in\H$ such that $B=B_1N+B_2$, $A=A_1\Cj{B_2}+A_2$, $C=\Cj{B_2}C_1+C_2$, and $B_2\ne 0$. Since $N$ divides $B\Cj{B}$, it divides $B_2\Cj{B_2}$ as well. Since $N$ divides $ABC$, it divides $A_2B_2C_2$ as well. The latter polynomial has a degree at most $1$, hence it vanishes. Since $B_2\ne 0$, it follows that $A_2=0$ or $C_2=0$, hence $N$ divides $AB$ or $BC$.
\end{proof}

\begin{lem}[AB-Lemma]
  \label{lem:g-a-b-lemma}
  If a monic polynomial $F\in \R[t]$ divides the product $AB$ of polynomials 
  $A,B\in \H[t]$ but 
  $\mrpf(F,A)=\mrpf(F,B)=1$, then $\rgcd(F,A)=\lgcd(F,B)^*$ and $F=\rgcd(F,A)\lgcd(F,B)$.
\end{lem}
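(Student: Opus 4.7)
The plan is to induct on $\deg F$. I first observe that $F$ cannot have any real root: if $(t - r) \mid F$ for some $r \in \R$, then $(t - r) \mid AB$ gives $A(r)B(r) = 0$ in $\H$, and since $\H$ is a division ring either $A(r) = 0$ or $B(r) = 0$. Either of these makes $(t - r)$ a common real divisor of $F$ with $A$ or $B$, contradicting the hypotheses on $\mrpf$. So every real irreducible factor of $F$ is quadratic.

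The base case is $F = N = t^2 + pt + q$ irreducible. I would consider the remainders $R_A = \rem(A, N)$ and $R_B = \rem(B, N)$ of central division by $N$. From $N \mid R_A R_B$ together with the $\mrpf$ hypotheses and degree considerations, both $R_A$ and $R_B$ must be genuinely linear. Writing $R_A = a_1(t - h)$ and $R_B = (t - k) b_1$ with $h, k \in \H$ and $a_1, b_1 \neq 0$, the equation $R_A R_B = a_1 b_1 N$, combined with left-cancellation of $a_1$ and right-cancellation of $b_1$ in the domain $\H[t]$, reduces to $(t - h)(t - k) = N$. Comparing coefficients then forces $h + k = -p \in \R$ and $hk = q \in \R$, hence $k = \Cj{h}$ and $h^2 + ph + q = 0$; that is, $h$ is a quaternion root of $N$ and $N = (t - h)(t - \Cj{h}) = (t - \Cj{h})(t - h)$. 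Substituting $A = SN + a_1(t - h) = [S(t - \Cj{h}) + a_1](t - h)$ shows $(t - h)$ is a right factor of $A$, and symmetrically $(t - \Cj{h}) = (t - h)^*$ is a left factor of $B$.

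For the inductive step I pick any irreducible quadratic factor $N$ of $F$, write $F = F' N$, and apply the base case to obtain $A = A'(t - h)$ and $B = (t - \Cj{h}) B'$. Then $AB = A' N B'$, so cancelling the central $N$ in the domain $\H[t]$ yields $F' \mid A' B'$. The hypotheses $\mrpf(F', A') = \mrpf(F', B') = 1$ are inherited from those of $(F, A, B)$, so induction gives $F' = P' Q'$ with $Q' = (P')^*$, $P' \mid A'$ on the right, and $Q' \mid B'$ on the left. Setting $P := P'(t - h)$ and $Q := (t - \Cj{h}) Q'$ produces $Q = P^*$, $PQ = P' N Q' = F$, and the desired one-sided divisibilities. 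To identify $P$ with $\rgcd(F, A)$ and $Q$ with $\lgcd(F, B)$, I use that $P$ is a common right factor (so $P$ right-divides $\rgcd(F, A)$); for the reverse inequality I invoke the general identity $X X^* = X^* X$ in $\H[t]$ (which forces $PQ = QP$) together with cancellation in the domain $\H[t]$ to show that any strictly larger common right factor of $F$ and $A$ would descend to a nontrivial real common divisor, contradicting $\mrpf(F, A) = 1$.

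The main obstacle is the base case, specifically the careful manipulation of $R_A R_B = a_1 b_1 N$ to extract the quaternion root $h$ and to verify that the resulting linear factors sit on the correct side of $A$ and $B$. A secondary subtlety is the $\rgcd/\lgcd$ identification, where non-commutativity of $\H[t]$ makes left/right bookkeeping delicate.
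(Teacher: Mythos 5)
Your proposal is correct and follows essentially the same route as the paper: induct on the irreducible quadratic factors of $F$, extract a monic linear right factor $t-h$ of $A$ and the conjugate linear left factor $t-\Cj{h}$ of $B$ with $(t-h)(t-\Cj{h})=N$, then recurse, finishing with the same norm-based argument (via $Y\Cj{Y}\in\R[t]$ being a nontrivial common real divisor) to pin down $\rgcd(F,A)$ and $\lgcd(F,B)$. The only cosmetic difference is that you obtain $(t-h)(t-\Cj{h})=N$ directly from $R_AR_B=a_1b_1N$ via cancellation of the units $a_1,b_1$, whereas the paper first passes through $N\mid A\Cj{A}$ to get $R\Cj{R}=pN$; both routes are correct and of comparable length.
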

\begin{proof}
     The polynomial $F$ has no real roots, otherwise, it would have a common real root with $A$ or $B$, contradicting $\mrpf(F,A)=\mrpf(F,B)=1$. Let $N$ be a real irreducible quadratic factor of $F$. Since $N\,|\,F\,|\,AB$, it follows that $N\,|\,A\Cj{A}B\Cj{B}$, hence $N\,|\,A\Cj{A}$ or $N\,|\,B\Cj{B}$. Assume that $N\,|\,A\Cj{A}$ without loss of generality. Using division with remainders, we find $Q,R,B_1\in\H[t]$ and $r\in\H$ such that $A=QN+R$ and $B=\Cj{R}B_1+r$. Here $R\ne 0$ because $\mrpf(F,A)=1$. Since  $N\,|\,A\Cj{A}$, it follows that $R\Cj{R}=pN$ for some nonzero $p\in\mathbb{R}$. Since $N\,|\,AB$, it follows that $N\,|\,Rr$, hence $r=0$. The polynomials $A_1:=AR^{-1}=Q\Cj{R}/p+1$, $B_1=(\Cj{R})^{-1}B$, and $F_1:=F/N$ still satisfy the assumptions of the lemma. Applying the same construction inductively, we get $A_n,B_n,R_n\in\H[t]$ such that $A=A_nR_n$, $B=\Cj{R_n}B_n$, $F=R_n\Cj{R_n}$, and $R_n$ is monic.

     It remains to show that $R_n=\rgcd(F,A)$ and $\Cj{R_n}=\lgcd(F,B)$ (cf.~\cite[Lemma~3]{li16}). Indeed, by the extended Euclidean algorithm, $\rgcd(F,A)=XA+YF$ for some $X,Y\in\H[t]$. Hence $\rgcd(F,A)=SR_n$ for some monic $S\in\H[t]$. Since $F=\Cj{R_n}R_n$ and $A=A_nR_n$ are right-divisible by $\rgcd(F,A)=SR_n$, it follows that $\Cj{R_n}$ and $A_n$ are right-divisible by $S$.
     Then $R_n$ is left-divisible by $\Cj{S}$, hence  
     $F$ and $A$ are divisible by $S\Cj{S}$. Since $\mrpf(F,A)=1$, it follows that $S=1$ and $R_n=\rgcd(F,A)$. Analogously, 
     $\Cj{R_n}=\lgcd(F,B)$.
\end{proof}

Let us conclude this section with a definition of the product symbol. We set $\prod_{i=l}^{u} r(i)=r(l)r(l+1)\dots r(u)$ for $l\le u$ and $\prod_{i=l}^{u} r(i)=1$ when $l>u$, where $r(i)$ is a function in $i$.

\section{Conditions for Existence of Factorizations with Linear Factors}
\label{sec:factorcondition}

\subsection{Factorizations of Unbounded Motion Polynomials}
\label{sec:unbounded}

For an unbounded motion polynomial $M$, the existence of a factorization is not guaranteed, not even after multiplication with a real polynomial $S \in \R[t]$ or a quaternion polynomial $T \in \H[t]$ \cite[Example~13]{li19}. However, there is a dense set of unbounded motions polynomials that admit a factorization:

\begin{thm}[{\cite[Theorem~7]{li19}}]
  \label{th:unbounded-sufficient}
  For any unbounded motion polynomial $M = P + \eps D \in \DH[t]$ with $P$, $D \in  \H[t]$ such that $P$ has no linear real factors of multiplicity two or greater,  there exists $S \in \R[t]$ such that $MS$ admits a factorization with linear motion polynomial factors.
\end{thm}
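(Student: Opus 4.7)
My plan is to reduce the unbounded case to the bounded case by inducting on the number of distinct real linear factors of $P$, peeling off one translational linear factor at each step. The base case is when $P$ has no linear real factor at all, i.e., when $M$ is bounded in the sense of Definition~\ref{def:generic-bounded}; in this case the existence result of \cite{li19jsc} mentioned in the introduction already provides a real polynomial $S$ such that $MS$ factors into linear motion polynomials.

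For the inductive step, pick a real root $r$ of $P$. By the simplicity hypothesis $(t-r)$ is a simple factor of $P$, so $P'(r) \in \H$ is nonzero and hence invertible. Set $v := -P'(r)^{-1}D(r) \in \H$. Since $t$ is central in $\DH[t]$ and $\eps^2 = 0$, the right substitution $t \mapsto r + \eps v$ yields
\begin{equation*}
  M(r + \eps v) = P(r) + \eps \bigl(D(r) + P'(r) v\bigr) = 0,
\end{equation*}
so $t - r - \eps v$ is a right divisor of $M$. Provided $v$ is vectorial (addressed below), one has $(t-r-\eps v)\Cj{(t-r-\eps v)} = (t-r)^2 \in \R[t]$, making $t - r - \eps v$ a (translational) linear motion polynomial. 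Writing $M = M_1 (t - r - \eps v)$ with $M_1 \in \DH[t]$ of degree $n - 1$, we get $M_1 \Cj{M_1} = M \Cj{M}/(t-r)^2 \in \R[t] \setminus \{0\}$, so $M_1$ is again a motion polynomial, and expanding the product identifies its primal part as $P_1 = P/(t-r)$. This $P_1$ still satisfies the simplicity hypothesis but has one fewer real linear factor, so by induction there exists $S \in \R[t]$ with $M_1 S$ admitting a factorization into linear motion polynomials; concatenating $t - r - \eps v$ on the right then yields such a factorization for $MS$.

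The main subtlety I expect is verifying that $v$ is vectorial, i.e., $v + \Cj{v} = 0$. For this I would differentiate the Study condition $P\Cj{D} + D\Cj{P} = 0$ in $t$ and evaluate at $t = r$; since $P(r) = \Cj{P}(r) = 0$, two of the four resulting summands vanish, leaving
\begin{equation*}
  P'(r)\Cj{D}(r) + D(r)\Cj{P'(r)} = 0,
\end{equation*}
which is equivalent to $v + \Cj{v} = 0$ after multiplying by $P'(r)^{-1}$ on the left and $\Cj{P'(r)}^{-1}$ on the right. The simplicity hypothesis on linear real factors of $P$ enters precisely here, by guaranteeing the invertibility of $P'(r)$; without it one could neither solve for $v$ nor absorb the factor $(t-r)^2$ into the norm of a single translational linear motion polynomial, which is the very reason the theorem fails in general for multiplicity $\ge 2$.
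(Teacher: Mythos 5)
Your argument is essentially correct, but note that the paper itself offers no proof of this statement: it is imported verbatim as \cite[Theorem~7]{li19}, so there is nothing internal to compare against. What you have done is give a clean reduction of the unbounded case to the bounded case: at a simple real root $r$ of $P$ the right evaluation $M(r+\eps v)=P(r)+\eps\bigl(D(r)+P'(r)v\bigr)$ is killed by the unique choice $v=-P'(r)^{-1}D(r)$ (here $P'(r)=P_1(r)\neq 0$ precisely because the factor $t-r$ is simple), the differentiated Study condition shows $v$ is vectorial so that $t-r-\eps v$ is a translational linear motion polynomial, and splitting it off strictly reduces the number of real linear factors of the primal part. All the individual steps check out. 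Two small points you should make explicit for the induction to close: the quotient $M_1$ is monic (clear) and reduced (any real factor of $M_1$ would divide $M$), since Definition~\ref{def:generic-bounded} and the bounded-case existence theorem you invoke in the base case both presuppose monic reduced input; also, since $S$ is central, $MS=(M_1S)(t-r-\eps v)$ is indeed the concatenation you claim. The resulting proof is conditional on the bounded-case result of \cite{li19jsc} (Theorem~4 here), which is a legitimate black box in this context but means your argument is a reduction between two cited theorems rather than a from-scratch proof; as a bonus, it shows the real cofactor $S$ can always be chosen to depend only on the bounded part left after stripping all translational factors.
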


We complement Theorem~\ref{th:unbounded-sufficient} by the following result. 

\begin{prop}
  \label{prop:unbounded-necessary}
  If a reduced unbounded motion polynomial $M = P + \eps D \in \DH[t]$ with $P$, $D \in \H[t]$ admits a factorization into linear motion polynomials, then $P$ has no linear real factors of multiplicity two.
\end{prop}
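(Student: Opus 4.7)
My plan is to argue by contradiction. I assume that $M$ is reduced, that it admits a factorization $M=(t-h_1)(t-h_2)\cdots(t-h_n)$ with each $t-h_i$ a monic linear motion polynomial, and that $(t-a)^2\mid P$ for some $a\in\R$; the goal is to force $(t-a)\mid M$ in $\DH[t]$, violating $\mrpf(M)=1$. Writing $h_i=p_i+\eps d_i$, I first note that because $\eps^2=0$ and $t$ is central, the primal part of a dual quaternion product equals the product of primal parts, so $P=(t-p_1)(t-p_2)\cdots(t-p_n)$ in $\H[t]$.

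The next step is to count how many of the $p_i$ are equal to $a$. Since $(t-a)$ lies in the center of $\H[t]$, each factor $(t-p_i)$ with $p_i=a$ commutes past the others and can be pulled to the front, while if $p_i\neq a$ then $(t-a)\nmid(t-p_i)$ by a one-line degree comparison. Hence $P=(t-a)^m Q$ with $Q(a)\neq 0$ and $m$ equal to the number of indices $i$ with $p_i=a$. The hypothesis $(t-a)^2\mid P$ therefore forces $m\geq 2$, so I can fix two indices $k_1<k_2$ with $p_{k_1}=p_{k_2}=a$.

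The heart of the argument is then to evaluate $M(a)\in\DH$. Since $a$ is central in $\DH[t]$, $(t-a)\mid M$ is equivalent to $M(a)=0$. Setting $\alpha_i=a-p_i$ and $\beta_i=-d_i$, direct expansion yields
\[
  M(a)=\prod_{i=1}^n(\alpha_i+\eps\beta_i)=\prod_{i=1}^n\alpha_i
   \,+\, \eps\sum_{k=1}^n\Bigl(\prod_{i<k}\alpha_i\Bigr)\beta_k\Bigl(\prod_{i>k}\alpha_i\Bigr).
\]
The primal part equals $P(a)=0$. For the dual part, each summand omits exactly one index $\alpha_k$, so at least one of the vanishing factors $\alpha_{k_1},\alpha_{k_2}$ still appears either to the left or to the right of $\beta_k$ and annihilates the summand. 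Hence $M(a)=0$, so $(t-a)\mid M$ in $\DH[t]$, contradicting $\mrpf(M)=1$. The main thing to be careful about is precisely this last noncommutative annihilation step: the $\alpha_i$ cannot simply be collected together, but the fact that two of them vanish is enough to kill every summand no matter on which side of $\beta_k$ the zero factor sits. Everything else reduces to a routine evaluation exploiting the centrality of $a$.
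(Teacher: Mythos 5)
Your proof is correct and follows essentially the same route as the paper's: both arguments reduce to the observation that $(t-a)^2\mid P$ forces at least two linear factors whose primal part is $t-a$, which in turn forces the real factor $t-a$ to divide $M$, contradicting $\mrpf(M)=1$. Your evaluation-at-$a$ computation is just a cleaner packaging of the step the paper compresses into ``expanding the product of linear factors into two parts'' $M=M_1M_2$ with $p\mid P_1$ and $p\mid P_2$.
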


\begin{proof}
  Assume $\mrpf(P)$ has a linear real factor $p$ of multiplicity two. We can factor $M$ into $M_1M_2$, where $M_1 = pP_1+\eps D_1$ and $M_2 = pP_2+\eps  D_2$ by expanding the product of linear factors into two parts. But the product   $M_1M_2=p^2P_1P_2+\eps p (D_1P_2+P_1D_2)$ is not reduced. We have a contradiction.
\end{proof}

In what follows, we consider only bounded motion polynomials $M = P + \eps D$, where $P = cQ$ with monic $c \in \R[t]$ and $Q \in \H[t]$ such that $\mrpf(Q) = 1$ and $c$ has no real zeros.

\subsection{Factorizations of Generic Motion Polynomials}
\label{sec:generic}

For a generic motion polynomial, the factorization exists and there is a simple algorithm to construct it:

\begin{thm}[{\cite[Theorem~1]{hegedus13}}]
  \label{thm:alg_galg} 
  For a generic monic motion polynomial $M\in \DH[t]$ of degree~$n$,
Algorithm~\ref{galg} outputs a list $[L_1,L_2,\ldots,L_n]$ of monic linear motion polynomials such that
    $M=L_1L_2 \cdots L_n$.
\end{thm}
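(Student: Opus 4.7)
The plan is to prove the theorem by induction on the degree $n$ of $M$, mirroring what Algorithm~\ref{galg} does at each iteration: extract a monic linear right factor $L=t-h$ of $M$ and recurse on the left quotient $M':=\lquo(M,L)$. The base case $n=0$ is trivial because $M=1$ equals the empty product. For the inductive step I assume the result for all generic monic motion polynomials of degree less than $n$.

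Since $M$ is generic and bounded, the polynomial $P\Cj{P}=M\Cj{M}\in\R[t]$ has degree $2n$ and, because $P$ has no real roots (boundedness), decomposes into monic irreducible quadratic real factors. Pick any such factor $N=t^2-st+p$. As $N$ is central in $\DH[t]$, polynomial division yields $M=QN+R$ with $R=At+B$ and $A,B\in\DH$. The central technical claim is that $A$ is invertible in $\DH$, equivalently that its primal part $a\in\H$ is nonzero. Indeed, if $a=0$, taking primal parts of $M=QN+R$ would give $P\equiv b\pmod N$ for some constant quaternion $b$; then $N\mid P\Cj{P}$ forces $N\mid b\Cj{b}\in\R$, so $b=0$, hence $N\mid P$ and $N\mid\Cj{P}$, hence $N\mid\mrpf(P)$, contradicting $\mrpf(P)=1$. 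Set $h:=-A^{-1}B\in\DH$ and $L:=t-h$.

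Next I verify that $L$ is a monic linear motion polynomial and a right factor of $M$. The congruence $M\Cj{M}\equiv R\Cj{R}\pmod N$ combined with $N\mid M\Cj{M}$ gives $N\mid R\Cj{R}$ in $\D[t]$; comparing degrees yields $R\Cj{R}=(A\Cj{A})N$. Since $R=A(t-h)=AL$ and $L\Cj{L}$ lies in the center $\D[t]$ of $\DH[t]$, one computes $R\Cj{R}=AL\Cj{L}\Cj{A}=(L\Cj{L})\cdot A\Cj{A}$. Cancelling the invertible $A\Cj{A}$ yields $L\Cj{L}=N\in\R[t]$, so $L$ is a monic linear motion polynomial (in particular, $h+\Cj{h}=s$ and $h\Cj{h}=p$ are real). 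Furthermore $\lrem(M,L)$ equals the right-evaluation $M(h)=R(h)=Ah+B=0$, hence $L$ is a right factor of $M$ with $M=M'L$ for $M':=\lquo(M,L)\in\DH[t]$.

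It remains to verify that $M'$ is again a generic monic motion polynomial of degree $n-1$. Monicity and the degree are immediate, and the identity $M\Cj{M}=M'(L\Cj{L})\Cj{M'}=NM'\Cj{M'}$ shows $M'\Cj{M'}=(M\Cj{M})/N\in\R[t]$, so $M'$ is a motion polynomial. If $P'$ denotes the primal part of $M'$ and $p_h$ the primal part of $h$, then $P=(t-p_h)P'$, so any monic real divisor of $\mrpf(P')$ would divide both $P$ and $\Cj{P}$, contradicting $\mrpf(P)=1$; hence $\mrpf(P')=1$. The induction hypothesis applied to $M'$ yields $L_2,\dots,L_n$ with $M'=L_2\cdots L_n$, giving $M=LL_2\cdots L_n$. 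The principal obstacle is the invertibility of $A$: it is the only place where the genericness hypothesis $\mrpf(P)=1$ is essentially used, and precisely the step whose failure motivates the more delicate constructions of later sections.
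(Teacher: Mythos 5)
Your proof is correct. The paper itself states this theorem without proof (it is quoted from \cite{hegedus13}), but your induction is precisely the standard argument and matches the paper's accompanying explanation of why $\czero(M,F)$ is well defined for generic $M$ --- namely the invertibility of the leading coefficient of the linear remainder, which you correctly derive from $\mrpf(P)=1$; the only slip is writing $P=(t-p_h)P'$ where the factorization $M=M'L$ actually gives $P=P'(t-p_h)$, a harmless transposition since real divisors are central.
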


Algorithm~\ref{galg} shows the algorithm of \cite{hegedus13} for the factorization of a generic motion polynomial $M$. Let us explain the notation there. If $F \in \R[t]$ is an irreducible quadratic factor of the norm polynomial $\QNorm{M}$, the remainder $R = r_1t + r_0$ when dividing $M$ by $F$ has, in general, a unique right zero $h = -r_1^{-1}r_0$. More precisely, the zero is unique if and only if the leading coefficient $r_1$ is invertible. In this case, we denote it by $\czero(M,F) \coloneqq h$. A necessary and sufficient condition for $r_1$ to be invertible is that $F$ does not divide the primal part $P$ of $M$, i.e., $\mygcd(P,F) = 1$. Because $M$ is generic, this condition is always fulfilled and the computation of $h$ in Step~\ref{galg:h} of Algorithm~\ref{galg} will always work. In Step~\ref{galg:concat}, two lists are concatenated and the procedure is called recursively. Non-uniqueness of the factorization comes from the possibility of selecting the quadratic factor $F$ in Step~\ref{galg:pick-quadratic-factor}.

\begin{algorithm}
  \caption{\texttt{GFactor}: Factorization algorithm for generic motion polynomials}
  \label{galg}
  \begin{algorithmic}[1]
    \Require A generic monic motion polynomial $M \in \D\H[t]$ of degree~$n$.
    \Ensure A list $[L_1,\ldots,L_n]$ of monic linear motion polynomials such that
    $M=L_1\cdots L_n$.
\If{$\deg M = 0$}
      \State \Return $[\,]$ \Comment{Empty list.}
    \EndIf
    \State $F \adef$ a quadratic real factor $F$ of the norm polynomial $\QNorm{M} \in \R[t]$
    \label{galg:pick-quadratic-factor}
    \State $h \adef \czero(M, F)$
    \label{galg:h}
    \State $M \adef \lquo(M, t-h)$
    \State \Return $\lconcat{\mathtt{GFactor}(M)}{t-h}$
    \label{galg:concat}
  \end{algorithmic}
\end{algorithm}

We complement Theorem~\ref{thm:alg_galg} by the following result.

{\begin{cor}
  \label{cor:alg_galg}
  Let $M = P + \eps D \in \DH[t]$ be a monic motion polynomial, $G \in \R[t]$ be a monic  
  polynomial such that $G| \QNorm{M}$ and $\mrpf(G,P)=1$. Then there is a motion polynomial $M_2$ that admits a factorization into monic linear factors such that $G=M_2\Cj{M_2}$ and $M=M_1M_2$ for some motion polynomial $M_1$. \end{cor}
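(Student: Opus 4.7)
The plan is to build $M_1$ by extracting monic linear motion polynomial \emph{left} factors of $M$ one at a time, one for each irreducible quadratic factor of $G$. First I observe that $G$ has no real linear factor: any real root $a$ of $G$ would force $P(a)\Cj{P(a)}=0$ via $G\mid P\Cj{P}=\QNorm{M}$, hence $P(a)=0$, so $(t-a)\in\R[t]$ would divide both $G$ and $P$, contradicting $\mrpf(G,P)=1$. Accordingly, I factor $G = F_1 F_2 \cdots F_k$ with each $F_i\in\R[t]$ monic and irreducible of degree~two.

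Setting $N_1:=M$, I would iterate for $i=1,\ldots,k$, maintaining the invariant that $M = L_1 \cdots L_{i-1}\,N_i$ for monic linear motion polynomials $L_j$ with $L_j\Cj{L_j}=F_j$, and that $N_i$ is a monic motion polynomial whose primal part $\tilde P_i$ right-divides $P$ in $\H[t]$. Since $\tilde P_i$ right-divides $P$ and $F_i\mid G$, the hypothesis $\mrpf(G,P)=1$ forces $\mrpf(\tilde P_i,F_i)=1$; combined with $F_i\mid\QNorm{N_i}$, the computation $h:=\czero(\Cj{N_i},F_i)$ underlying Step~\ref{galg:h} of Algorithm~\ref{galg} is well defined (coprimality with the primal part of $\Cj{N_i}$, namely $\Cj{\tilde P_i}$, is the same real-polynomial condition), and it produces a monic linear right factor $t-h$ of $\Cj{N_i}$ with norm $F_i$. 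Conjugating gives a monic linear motion polynomial $L_i:=t-\Cj{h}$ with $L_i\Cj{L_i}=F_i$ and $N_i = L_i N_{i+1}$ for some monic motion polynomial $N_{i+1}$; the primal part of $N_{i+1}$ right-divides $\tilde P_i$ and hence $P$, preserving the invariant.

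After $k$ steps I put $M_1 := L_1 L_2 \cdots L_k$ and $M_2 := N_{k+1}$, so that $M = M_1 M_2$. Because each $L_j\Cj{L_j}=F_j$ lies in the centre $\R[t]$, the norm telescopes from the inside out:
\[
  M_1\Cj{M_1} = L_1 \cdots L_{k-1}(L_k\Cj{L_k})\Cj{L_{k-1}}\cdots\Cj{L_1} = F_k\,L_1 \cdots L_{k-1}\Cj{L_{k-1}}\cdots\Cj{L_1} = \cdots = F_k F_{k-1}\cdots F_1 = G.
\]
Working in $\H(t)$, $M_2\Cj{M_2} = M_1^{-1}\,\QNorm{M}\,\Cj{M_1}^{-1} = \QNorm{M}/G \in \R[t]$, so $M_2$ is a motion polynomial, and $M_1$ is by construction a product of monic linear motion polynomials. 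The principal obstacle in the plan is the need to produce \emph{left} rather than right factors from the construction in Algorithm~\ref{galg}; I would handle this by passing to $\Cj{N_i}$, which is a motion polynomial of the same norm whose primal part shares all real divisors with $\tilde P_i$, and conjugating the resulting right factor.
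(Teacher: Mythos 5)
Your proof is correct and takes essentially the same route as the paper's very terse argument: reduce to the observation that $G$ has no real root and then peel off one monic linear motion-polynomial factor per irreducible quadratic factor of $G$ via the key step of Algorithm~\ref{galg}. The only genuine addition is your conjugation device for turning the algorithm's \emph{right} factor $t-h$ into the required \emph{left} factor $t-\Cj{h}$, together with the explicit invariant that the primal part of $N_i$ right-divides $P$ (which guarantees $\czero$ is well defined); the paper leaves these details implicit.
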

\begin{proof}
The polynomial $G$ has no real roots because $G| \QNorm{P}$ and $\mrpf(G,P)=1$.
The linear factors are obtained using Algorithm~\ref{galg} by the recursive procedure where each quadratic real factor of $G$ gives a linear factor. \end{proof}
}

Beware that if a non-generic motion polynomial $M$ has a decomposition into monic linear motion polynomial factors, $M_1$ need not have one.

\subsection{Decomposition of Bounded Motion Polynomials into Factors with Primary Norm}

\label{sec:primary-decomposition} 

For a non-generic motion polynomial $M$, the recent results from \cite{li17,li19jsc} ensure the existence of factorizations after multiplying $M$ with a suitable real or quaternion polynomial:
\begin{thm}[\cite{li17,li19jsc}]
For any bounded monic motion polynomial $M = P + \eps D \in \DH[t]$ with
  $P$, $D \in \H[t]$:
  \begin{itemize}
  \item There exists a polynomial $S \in \R[t]$ of degree $\deg S \le
    \deg\mrpf(P)$ such that $MS$ admits a factorization with linear motion polynomial factors.
  \item If $\mygcd(P,\QNorm{D}) = 1$ then there exists $T \in \H[t]$ of
    degree $\deg T = \frac{1}{2}\deg\mygcd(P)$ such that $MT$ admits a
    factorization with linear motion polynomial factors.
  \end{itemize}
\end{thm}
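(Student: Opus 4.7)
The proof separates naturally into the two bullets. For the first bullet, I propose induction on $\deg c$ where $c := \mrpf(P)$, using Algorithm~\ref{galg} and Corollary~\ref{cor:alg_galg} as the principal tools. The base case $\deg c = 0$ is the generic case, handled by Theorem~\ref{thm:alg_galg} with $S = 1$. For the inductive step, since $M$ is bounded, $c$ has no real zeros and admits an irreducible real quadratic factor $F$. The plan is to multiply by $F$ (or use $F$ twice, once on each side, accumulating $F$ into $S$) and then apply Corollary~\ref{cor:alg_galg} with $G = F$ to extract a right linear motion polynomial factor $L$ with $L \Cj{L} = F$. The quotient $MF/L$ should then have a primal part whose $\mrpf$ is strictly smaller than $c$, and a second linear factor can be peeled off so the new primal $\mrpf$ equals $c/F$. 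We then apply the inductive hypothesis to the quotient. Since each step consumes an irreducible quadratic factor of $c$ and contributes the same degree to $S$, the bound $\deg S \le \deg c$ follows by a straightforward accounting.

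For the second bullet, the prescription $\deg T = \tfrac12 \deg c$ strongly suggests constructing $T$ as a ``quaternionic square root'' of $c$. Writing $c = N_1 N_2 \cdots N_k$ as a product of irreducible real quadratics (with $k = \tfrac12 \deg c$, possible because $c$ is real and has no real zeros), each $N_i$ factors as $L_i \Cj{L_i}$ for a monic linear quaternion polynomial $L_i$. I then aim to build $T := L_1 L_2 \cdots L_k$ with $T \Cj{T} = c$ in such a way that $MT$ admits a factorization into linear motion polynomials. The hypothesis $\mrpf(P, \QNorm{D}) = 1$ implies $\mrpf(c, D\Cj{D}) = 1$, and this coprimality is the crucial freedom: the vector part of each $L_i$ lives on a 2-sphere, and I would use the coprimality to choose each $L_i$ iteratively so that at every stage Corollary~\ref{cor:alg_galg} applies to peel off a linear motion polynomial factor. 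Concretely, I would proceed one $N_i$ at a time: pick $L_1$ so that $M L_1$ has a right linear motion polynomial factor associated with $N_1$; cancel that factor; continue with the quotient together with $L_2, \ldots, L_k$.

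The main obstacle in both parts is tracking how $\mrpf$ of the primal part and the boundedness condition behave under multiplication and division by linear factors. In the first bullet, one must verify that, after multiplying by $F$ and peeling off a linear factor, $\deg \mrpf$ of the new primal part strictly decreases and the quotient remains a bounded motion polynomial; the ABC-lemma (Lemma~\ref{lem:a-b-c-lemma}) and the AB-lemma (Lemma~\ref{lem:g-a-b-lemma}) should govern how the irreducible quadratics of the norm interact with $P$ and $D$. In the second bullet, the delicate step is ensuring the hypothesis $\mrpf(P, \QNorm{D}) = 1$ is indeed enough to make the iterative choice of $L_i$ feasible at every stage, rather than getting stuck on a leading coefficient that fails to be invertible in the analogue of Step~\ref{galg:h} of Algorithm~\ref{galg}. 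I expect this to be the heart of the argument.
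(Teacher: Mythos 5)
This theorem is not proved in the paper at all: it is quoted from \cite{li17,li19jsc}, and the authors explicitly decline to reproduce the argument (``The algorithm of \cite{li19jsc} for computing the co-factor $S$ is too complicated to be discussed here''). So the relevant question is whether your sketch would actually work, and for the first bullet it would not. The tool you propose, Corollary~\ref{cor:alg_galg}, carries the hypothesis $\mrpf(G,P)=1$; taking $G=F$ with $F\mid c=\mrpf(P)$ violates it, and multiplying $M$ by $F$ only raises the multiplicity of $F$ in the primal part, so the hypothesis fails for $MF$ as well. Concretely, the remainder of $MF$ modulo $F$ is $\eps$ times a linear quaternion polynomial, its leading coefficient is a zero divisor, and $\czero(MF,F)$ is undefined --- there is no generic linear factor to peel off. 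Moreover, even granting such a factor $L$ with primal part $t-p$, the primal part of $MF L^{-1}$ is $P(t-\Cj{p})$, whose $\mrpf$ still contains all of $c$; so the claimed strict decrease of $\deg\mrpf$ does not occur and the induction never advances. The paper's own Example~\ref{ex:multiplication-trick} ($M=t^2+1+\eps\qi$, $S=t^2+1$) is precisely a case where your mechanism stalls at the first step: the four linear factors of $MS$ are not obtained by applying the generic algorithm to $MS$, whose primal part $(t^2+1)^2$ is divisible by every quadratic factor of its norm.

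For the second bullet your plan is closer in spirit to what \cite{li17} actually does, but the step you defer --- showing that $\mygcd(P,\QNorm{D})=1$ lets you choose each $L_i$ so that a linear motion polynomial factor can be split off --- is the entire content of that proof. The mechanism is not the primal-part zero $\czero(M,N)$ (which is always undefined here, since $N\mid P$) but a zero computed from the \emph{dual} part: reducing $D$ modulo $N$ gives a linear remainder whose leading coefficient is invertible exactly because $N\nmid\QNorm{D}$, and that is what determines $L_i$. Without this, your sketch for both bullets reduces to ``apply the generic algorithm after arranging its hypotheses,'' whereas the whole difficulty of the theorem is that those hypotheses cannot be arranged by multiplying with real polynomials alone. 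I would grade this as identifying the right obstacles but not overcoming them; the argument as written has a genuine gap at its central step.
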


The algorithm of \cite{li19jsc} for computing the co-factor $S$ is too complicated
  to be discussed here. We confine ourselves to a simple example.

\begin{example}
  \label{ex:multiplication-trick}
  Consider the polynomial $M = t^2 + 1 + \eps\qi$. It admits no factorization with monic linear motion polynomial factors (this follows from Theorem~\ref{th:bounded} and can also be checked by a direct computation~\cite[Example 1]{li19jsc}). But with $S = t^2+1$ and $T = t - \qk$, we have
  \begin{align*}
    MS &=
    (t+\tfrac{3}{5}\qj-\tfrac{4}{5}\qk)
    (t - \tfrac{3}{5}\qj+\tfrac{4}{5}\qk + \eps(\tfrac{2}{5}\qj+\tfrac{3}{10}\qk))
    (t - \tfrac{3}{5}\qj+\tfrac{4}{5}\qk - \eps(\tfrac{2}{5}\qj + \tfrac{3}{10}\qk))
    (t+\tfrac{3}{5}\qj-\tfrac{4}{5}\qk),\\
MT &= (t + \qk)
    (t - \qk - \tfrac{1}{2}\eps\qj)
    (t - \qk + \tfrac{1}{2}\eps\qj).
  \end{align*}
\end{example}

The above results state that a factorization of a bounded motion polynomial into linear factors can be guaranteed but it might be necessary to multiply it with a real polynomial (which does not change the underlying motion) or with a quaternion polynomial (which does not change the trajectory of the origin). One can use the factorization of motion polynomials to construct linkages with a prescribed bounded rational trajectory as \cite{gallet16, li17}. However, the number of joints will increase with the degree of $S$ or $T$. Therefore, it is desirable to have co-factors $S$ or $T$ of low degree, possibly even of degree zero. This paper focuses on precisely this situation. We characterize bounded motion polynomials that admit a factorization with monic linear factors without requiring prior multiplication with a real or quaternion polynomial.

Our approach has two major steps. In the first step, we provide a factorization $M = M_1 \cdots M_r$ into not necessarily linear factors $M_i$ of \emph{primary} norm, i.e., $\QNorm{M_i} = N_i^{n_i}$, where $N_i \in \R[t]$ is an irreducible quadratic polynomial and $n_i$ is a positive integer. In the second step, we obtain factorizations of each polynomial $M_i$ of primary norm into monic linear factors.

Recall that a motion polynomial is \emph{translational} if its primal part is a real polynomial.

\begin{prop}
  \label{prop:alg_translation}
  For any monic translational motion polynomial $M = f_1 f_2 + \eps D \in \DH[t]$ such that $D \in \H[t]$ and $f_1$, $f_2 \in \R[t]$ are two coprime monic real polynomials, Algorithm~\ref{alg:factortranslation} outputs translational motion polynomials $M_1 = f_1 + \eps D_1$, $M_2 = f_2 + \eps D_2$ such that $M = M_1M_2$.
\end{prop}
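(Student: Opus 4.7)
The plan is to reduce the assertion to a linear equation in quaternion polynomials and then solve it via Bezout in $\R[t]$. First I would expand
\begin{equation*}
(f_1 + \eps D_1)(f_2 + \eps D_2) = f_1 f_2 + \eps(f_1 D_2 + D_1 f_2),
\end{equation*}
using $\eps^2 = 0$ and that $f_1, f_2 \in \R[t]$ commute with everything. So the factorization $M = M_1 M_2$ is equivalent to finding $D_1, D_2 \in \H[t]$ satisfying $f_1 D_2 + D_1 f_2 = D$, subject to the additional constraint that each $M_i$ is itself a motion polynomial.

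Next I would pin down the motion polynomial condition. A direct calculation gives $M_i \Cj{M_i} = f_i^2 + \eps f_i (D_i + \Cj{D_i})$, so $M_i$ is a motion polynomial iff the scalar part of $D_i$ vanishes, that is, $D_i$ is vectorial. The analogous computation for $M$ yields $M\Cj{M} = f_1^2 f_2^2 + \eps f_1 f_2 (D + \Cj{D}) \in \R[t]\setminus\{0\}$, forcing $D$ to be vectorial too. Hence we may seek vectorial $D_1, D_2$ solving the equation above, and any Bezout-style manipulation involving only real polynomials will preserve this property automatically, since real polynomials act on quaternion coefficients by scalar multiplication.

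Finally, I would solve the equation via Bezout. Because $\mrpf(f_1, f_2) = 1$, the extended Euclidean algorithm in $\R[t]$ produces $a, b \in \R[t]$ with $a f_1 + b f_2 = 1$. To obtain canonical factors of low degree (which I expect Algorithm~\ref{alg:factortranslation} to output), I would take $D_2$ to be the remainder of $aD$ upon division by $f_2$; writing $aD = qf_2 + D_2$ with $q \in \H[t]$ and $\deg D_2 < \deg f_2$, one computes
\begin{equation*}
D - f_1 D_2 = D - f_1 aD + f_1 q f_2 = (bD + f_1 q)\, f_2,
\end{equation*}
so that $D_1 \coloneqq bD + f_1 q$ gives a valid (and low-degree) solution. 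All of $a, b, f_1, f_2, q$ are real-coefficient, and $D$ is vectorial, so $D_1$ and $D_2$ are vectorial, confirming that $M_1, M_2$ are motion polynomials with $M_1 M_2 = M$. There is really no serious obstacle here: the key point is simply that $\R[t]$ sits in the center of $\DH[t]$, so the classical Bezout identity for coprime real polynomials lifts trivially to the quaternion and dual-quaternion setting. The only mild care needed is the bookkeeping for the commutation $f_1 D_2 = D_2 f_1$ etc., and the observation that vectoriality of $D$ descends to $D_1, D_2$ through these manipulations.
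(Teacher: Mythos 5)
Your proposal is correct and follows essentially the same route as the paper: write the Bezout identity $af_1+bf_2=1$ in $\R[t]$, hit it with $D$, and reduce modulo $f_1,f_2$ to peel off $D_1,D_2$; the vectoriality and motion-polynomial properties follow because all manipulations are by central real polynomials. The one point you gloss over with ``low-degree'' is that $\deg D_1 < \deg f_1$: this is where the paper actually invokes monicity of $M$ (so $\deg D<\deg f_1f_2$), forcing the quotients $H_1+H_2=0$ in their version, or equivalently forcing $\deg(D_1f_2)<\deg(f_1f_2)$ in yours; without it $M_1=f_1+\eps D_1$ need not be monic, which the downstream algorithms rely on.
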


\begin{proof}
Let $d_1$ and $d_2$ be two real polynomials such that $f_1d_1+f_2d_2 = 1$, for example, those given by the extended Euclidean algorithm.
Then multiplying both side by $D$ yields $D = f_1 d_1 D + f_2 d_2 D$. We divide $d_1 D$ and $d_2 D$ by $f_2$ and $f_1$, respectively, and obtain the quotients $H_1$, $H_2$, and the remainders $D_1$, $D_2 \in \H[t]$ such that $d_1 D = f_2 H_2 + D_2$ and $d_2 D = f_1 H_1 + D_1$ whence
  \begin{equation*}
    D = f_1 (f_2 H_2 + D_2) + f_2 (f_1 H_1 + D_1) = f_1 f_2 (H_1 + H_2) + f_1 D_2 + f_2 D_1.
  \end{equation*}
  Comparing coefficients on both sides we conclude that $H_1 + H_2 = 0$ by monicity of $M$, i.e., $\deg(D) < \deg(f_1 f_2)$. Therefore $D = f_1D_2 + f_2D_1$. With this we have
  \begin{equation*}
    (f_1 + \eps D_1) (f_2 + \eps D_2) = f_1 f_2 + \eps(f_1 D_2 + f_2 D_1) = f_1 f_2 + \eps D = M.
  \end{equation*}
  Since $D$, $H_1$, and $H_2$ are vectorial, and $\deg(D_i) < \deg(f_i)$ for $i =
  1$, $2$ by the definition of polynomial division, it follows that $f_1 + \eps D_1$ and $f_2
  + \eps D_2$ are monic motion polynomials.
\end{proof}

Notice that $M$ needs not be reduced or bounded.

\begin{algorithm}
  \caption{Factorization algorithm for translational motion polynomials}
  \label{alg:factortranslation}
  \begin{algorithmic}[1]
    \Require A monic translational 
    motion polynomial $M = f_1f_2 + \eps D \in \DH[t]$ with $f_1,f_2\in\R[t]$ and $\mygcd(f_1,f_2)=1$.
    \Ensure Two motion polynomials $M_1 = f_1 + \eps D_1$, $M_2 = f_2 + \eps D_2$ such that $M = M_1M_2$.
    \State Find real polynomials $d_1$, $d_2$ with $f_1d_1+f_2d_2=1$ by the extended Euclidean algorithm.
\State $D_1 \adef \rrem(d_1D, f_2)$, $D_2 \adef \rrem(d_2D, f_1)$
    \State $M_1 \adef f_1+\eps D_1$
    \State $M_2 \adef f_2+\eps D_2$
    \State \Return $M_1$, $M_2$
  \end{algorithmic}
\end{algorithm}

Now we compute $M_1$, $M_2$, \ldots, $M_r$ for non-translational polynomials $M$.

\begin{algorithm}
  \caption{\texttt{MGFactor}: Algorithm for decomposing motion polynomials into
    factors of primary norm}
  \label{alg:mgfactorization}
  \begin{algorithmic}[1]
    \Require A bounded monic reduced motion polynomial $M=P+\eps D \in \D\H[t]$.
\Ensure A list $[M_1,M_2,\ldots,M_r]$ of monic motion polynomials such that
    $M=M_1 M_2 \cdots M_r$ and $\QNorm{M_1}= N_1^{n_1}$, \dots, $\QNorm{M_r}= N_r^{n_r}$ for some pairwise coprime real irreducible quadratic polynomials $N_1$,     $N_2$, \ldots, $N_r$ and positive integers $n_1$, $n_2$, \ldots, $n_r$.
    \If{$\deg M = 0$}
      \State \Return $[]$ \Comment{Empty list.}
    \EndIf
    \State $N\adef$ quadratic, real factor of $\QNorm{M} \in \R[t]$ of
    multiplicity $n$
    \State $Q \adef \frac{P}{\mrpf(P)}$, $c_2 \adef \mygcd(P, N^{n})$, and $c_1 \adef \mrpf(P)/c_2$
    \State $Q_2 \adef \rgcd(N^{n}/c_2^2,Q)$ and $Q_1 \adef \lquo(Q,Q_2)$
    \State $M_t \adef c_1c_2\QNorm{Q_1}\QNorm{Q_2}+ \eps \Cj{Q}_1D\Cj{Q}_2$
    \State compute the factorization $M_t = (c_1\QNorm{Q_1}+\eps
    D_1)(c_2\QNorm{Q_2}+\eps D_2)$ \Comment Using Algorithm~\ref{alg:factortranslation}.
    \State $M' \adef c_1Q_1 +\eps \frac{D_1Q_1}{\QNorm{Q_1}}$, $M_r \adef c_2Q_2 +\eps \frac{D_2Q_2}{\QNorm{Q_2}}$\\
    \Return $[\mathtt{MGFactor}(M'), M_r]$
  \end{algorithmic}
\end{algorithm}

{
\begin{prop}
  \label{prop:alg_mgfactor}
   For a bounded monic reduced motion    polynomial $M \in \D\H[t]$, 
Algorithm~\ref{alg:mgfactorization} outputs a list $[M_1,M_2,\ldots,M_r]$ of monic motion polynomials such that
    $M=M_1 M_2 \cdots M_r$ and $\QNorm{M_1}= N_1^{n_1}$, \dots, $\QNorm{M_r}= N_r^{n_r}$ for some pairwise coprime real irreducible quadratic polynomials $N_1$,     $N_2$, \ldots, $N_r$ and positive integers $n_1$, $n_2$, \ldots, $n_r$. 
\end{prop}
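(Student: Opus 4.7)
The plan is to proceed by strong induction on $\deg M$. The base case $\deg M = 0$ returns the empty list. For the inductive step I would verify, in order, that the polynomials produced by the algorithm are well-defined; that $M_t$ is a monic translational motion polynomial to which Proposition~\ref{prop:alg_translation} applies; that the dual-part expressions $D_iQ_i/\QNorm{Q_i}$ (read with the appropriate ordering of the factors) define genuine polynomials so that $M'$ and $M_r$ are motion polynomials; that $M'M_r = M$ and $\QNorm{M_r} = N^n$ is primary; and that $M'$ is a bounded monic reduced motion polynomial whose norm has one fewer distinct irreducible quadratic factor than $\QNorm{M}$, so the inductive hypothesis applies (which will also ensure pairwise coprimality of the $N_i$).

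The structural unpacking proceeds as follows. Since $M$ is bounded, $\mrpf(P)$ has no real zero and splits uniquely as $c_1c_2$ with $c_2 = N^k$ the $N$-part and $\mygcd(c_1, N) = 1$. Writing $Q = P/(c_1c_2)$, one has $\mrpf(Q) = 1$, and $\QNorm{P} = c_1^2c_2^2\QNorm{Q}$ forces the $N$-adic valuation of $\QNorm{Q}$ to equal $n - 2k$. A crucial sub-claim is $\QNorm{Q_2} = N^{n-2k}$, which I would prove by noting that $Q_2$ right-divides $N^{n-2k}$ so its norm is a power of $N$, while the factor $Q_1$ in $Q = Q_1Q_2$ inherits $\mrpf(Q_1) = 1$; the AB-Lemma (Lemma~\ref{lem:g-a-b-lemma}) combined with the exact $N$-valuation of $\QNorm{Q}$ then forces $\QNorm{Q_2}$ to be maximal. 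Consequently $\QNorm{Q_1}$ is coprime to $N$ and $\mygcd(c_1\QNorm{Q_1}, c_2\QNorm{Q_2}) = 1$. Verifying that $M_t$ is a motion polynomial reduces, since its primal part is real, to vectoriality of $\Cj{Q_1}D\Cj{Q_2}$, which follows from the Study condition $Q\Cj{D} + D\Cj{Q} = 0$ on $M$ together with the centrality of $\QNorm{Q_1}$ in $\H[t]$. Proposition~\ref{prop:alg_translation} then provides vectorial $D_1, D_2$ with
\begin{equation*}
  c_1\QNorm{Q_1}D_2 + c_2\QNorm{Q_2}D_1 = \Cj{Q_1}D\Cj{Q_2}.
\end{equation*}
Multiplying on the right by $Q_2$ and invoking coprimality yields $\QNorm{Q_2}\mid D_2Q_2$; symmetrically, multiplying on the left by $Q_1$ yields the analogous divisibility needed to make the dual coefficient of $M'$ polynomial.

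Finally, direct expansion of $M'M_r$, combined with the above identity multiplied on the left by $Q_1$ and on the right by $Q_2$, gives $M'M_r = M$. Norm computations yield $\QNorm{M_r} = c_2^2\QNorm{Q_2} = N^n$ (primary) and $\QNorm{M'} = \QNorm{M}/N^n$ (one fewer distinct irreducible quadratic factor). Monicity of $M'$ is immediate from construction; boundedness follows from $\mrpf(c_1Q_1) = c_1$ having no real roots (using $\mrpf(Q_1)=1$); reducedness follows because a real common factor of the coefficients of $M'$ would lift through $M = M'M_r$ to a real common factor of $M$, contradicting $\mrpf(M) = 1$. Applying the inductive hypothesis to $M'$ and appending $M_r$ then completes the proof. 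The main obstacle is establishing $\QNorm{Q_2} = N^{n-2k}$ together with the divisibility claims $\QNorm{Q_i}\mid D_iQ_i$ (with the correct side of multiplication); both hinge on carefully combining the coprimality structure with AB-Lemma-style reasoning.
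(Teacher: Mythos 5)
Your proposal is correct and follows essentially the same line of argument as the paper's proof: decompose the norm as $c_1 c_2$ with $c_2$ the $N$-primary part of $\mrpf(P)$, use the AB-Lemma to split $Q = Q_1 Q_2$ with $\QNorm{Q_2}$ a pure $N$-power, conjugate $M$ by $Q_1/\QNorm{Q_1}$ on the left and $Q_2/\QNorm{Q_2}$ on the right to obtain the translational $M_t$, split $M_t$ via Proposition~\ref{prop:alg_translation}, and push the quaternion conjugations back into the two pieces. The only genuine differences are cosmetic: you induct on $\deg M$ whereas the paper inducts on the number $r$ of distinct irreducible quadratic factors of $\QNorm{M}$ (both trivially terminate, since $\deg M_r = n_r \ge 1$), and you establish the two divisibilities $\QNorm{Q_1}\mid Q_1 D_1$ and $\QNorm{Q_2}\mid D_2 Q_2$ by multiplying the dual-part identity $c_1\QNorm{Q_1}D_2 + c_2\QNorm{Q_2}D_1 = \Cj{Q}_1 D \Cj{Q}_2$ by $Q_1$ or $Q_2$ and then invoking coprimality in $\R[t]$, whereas the paper reads them off directly from the explicit formulas $D_1 = d_2\Cj{Q}_1 D \Cj{Q}_2 - H_1 c_1 \QNorm{Q_1}$, $D_2 = d_1\Cj{Q}_1 D \Cj{Q}_2 - H_2 c_2 \QNorm{Q}_2$ produced by Proposition~\ref{prop:alg_translation}. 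Both variants are sound; the paper's is slightly more direct, while your coprimality route has the small advantage of not depending on the internal structure of the subroutine. You are also a bit more explicit than the paper in checking that $\Cj{Q}_1 D \Cj{Q}_2$ is vectorial before invoking Proposition~\ref{prop:alg_translation}, which is a worthwhile addition.
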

\begin{proof} Let $\QNorm{M}=N_1^{n_1}N_2^{n_2} \cdots N_r^{n_r}$ be a decomposition into monic real irreducible factors. We perform induction on $r$. If $r=0$, then the algorithm outputs
  the empty list and stops. Assume that for an integer $r-1$, the algorithm
  outputs a list of $r-1$ elements as needed.

In Step~4, let $N=N_r$ without loss of generality. Using the definitions in Steps~5--6 and applying Lemma~\ref{lem:g-a-b-lemma} to $A:=Q$, $B:=\Cj{Q}$, and $F:=N^n/c_2^2$, we get
\begin{equation*}
P= c_1 Q_1 c_2Q_2,\quad
    \QNorm{c_1 Q_1} = \prod_{i=1}^{r-1} N_i^{n_i},\quad
    \QNorm{c_2 Q_2} = N_r^{n_r}
  \end{equation*}
  and
\begin{align*}
   M = \frac{\QNorm{Q_1}}{\QNorm{Q_1}} M \frac{\QNorm{Q_2}}{\QNorm{Q_2}} 
    &= \frac{Q_1}{\QNorm{Q_1}} \Cj{Q}_1 (c_1 Q_1 c_2 Q_2 + \eps D) \Cj{Q}_2 \frac{Q_2}{\QNorm{Q_2}}\\
    &= \frac{Q_1}{\QNorm{Q_1}} (c_1 \QNorm{Q_1} c_2 \QNorm{Q_2} + \eps \Cj{Q}_1 D \Cj{Q}_2) \frac{Q_2}{\QNorm{Q_2}}.
  \end{align*}    
The condition $\mygcd(c_1 \QNorm{Q_1}, c_2 \QNorm{Q_2}) = 1$ allows us to apply Proposition~\ref{prop:alg_translation} to the middle factor, and we obtain vectorial quaternion polynomials $D_1$, $D_2 \in \H[t]$ such that
  \begin{equation*}
    M = \frac{Q_1}{\QNorm{Q_1}} (c_1 \QNorm{Q_1} + \eps D_1) (c_2 \QNorm{Q_2} + \eps D_2) \frac{Q_2}{\QNorm{Q_2}}
      = \underbrace{\left(c_1 Q_1 + \eps \frac{Q_1 D_1}{\QNorm{Q_1}}\right)}_{M'} \underbrace{\left(c_2 Q_2 + \eps \frac{D_2 Q_2}{\QNorm{Q_2}}\right)}_{M_r}.
  \end{equation*}
  {By Proposition~\ref{prop:alg_translation}, we have}
\begin{align*}
    D_1 &= d_2 \Cj{Q}_1 D \Cj{Q}_2 - H_1 c_1 \QNorm{Q_1} = \Cj{Q}_1 D \Cj{Q}_2 d_2 - \Cj{Q}_1Q_1 H_1 c_1,\\
    D_2 &= d_1 \Cj{Q}_1 D \Cj{Q}_2 - H_2 c_2 \QNorm{Q_2} = d_1 \Cj{Q}_1 D \Cj{Q}_2 - H_2 c_2 Q_2\Cj{Q}_2
  \end{align*}
  for suitable $H_1$, $H_2 \in \H[t]$ and $d_1,d_2\in\R[t]$. The polynomial $\Cj{Q}_1$ is a left factor of $D_1$, and $\Cj{Q}_2$ is a right factor of $D_2$. Hence, $Q_1 D_1$ is divisible by $\QNorm{Q_1}$ and $D_2 Q_2$ is divisible by $\QNorm{Q_2}$. Therefore, both $M'$ and $M_r$ are polynomials. Obviously, they are monic and bounded. The motion polynomial property is derived by
  \begin{align*}
    \QNorm{M'} &= \Cj{\left(c_1 Q_1 + \eps \frac{Q_1 D_1}{\QNorm{Q_1}}\right)} \left(c_1 Q_1 + \eps \frac{Q_1 D_1}{\QNorm{Q_1}}\right) = c_1^2 \QNorm{Q_1} + \eps c_1 \underbrace{(D_1 + \Cj{D}_1)}_{=0} = \prod_{i=1}^{r-1} N_i^{n_i}, \\
    \QNorm{M_r} &= \left(c_2 Q_2 + \eps \frac{D_2 Q_2}{\QNorm{Q_2}}\right) \Cj{\left(c_2 Q_2 + \eps \frac{D_2 Q_2}{\QNorm{Q_2}}\right)} = c_2^2 \QNorm{Q_2} + \eps c_2  \underbrace{(D_2 + \Cj{D}_2)}_{=0} = N_r^{n_r},
  \end{align*}
  since $D_1$ and $D_2$ are vectorial polynomials. A recursive application of above procedure on $M'$ finishes the proof.
\end{proof}

}

\subsection{Factorizations of Bounded Motion Polynomials with Primary Norm}
\label{sec:non-generic}

In this subsection, we focus on polynomials $M = cQ + \eps D$ whose norm polynomials have only one irreducible factor $N$, i.e., $\QNorm{M} = c^2\QNorm{Q} = N^n$. In what follows we assume that $c\in\R[t]$ and $Q,D\in\H[t]$. Once we obtain conditions for the factorizability of such polynomials $M$, we are going to extend the results to arbitrary motion polynomials using Proposition~\ref{prop:alg_mgfactor}.

We start with an auxiliary factorization.

\begin{lem}
  \label{lem:factorizationpattern}
  Consider a bounded, monic, reduced motion polynomial $M = cQ + \eps D \in \D\H[t]$ of degree $n$ with monic $c\in\R[t]$, $\mrpf(Q) = 1$, and $\QNorm{M} = c^2\QNorm{Q} = N^n$ for some irreducible, quadratic polynomial $N \in \R[t]$. If $M$ admits a factorization into monic linear motion polynomial factors, then there exist quaternion polynomials $Q_L$, $Q_R$, $D_L$, $D_R$, $D' \in \H[t]$  such that  $M = (Q_L + \eps D_L) (c + \eps D') (Q_R + \eps D_R)$, where $Q_L + \eps D_L$, $c + \eps D'$, and $Q_R + \eps D_R$ are products of monic linear motion polynomials.
\end{lem}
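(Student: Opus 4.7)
I plan to leverage the given factorization $M = L_1 L_2 \cdots L_n$ into monic linear motion polynomials and reorganize it into the desired three-block form. The initial observation is that each linear factor $L_i = t - h_i$ with $h_i = p_i + \eps d_i$ must have non-real $p_i$ and satisfy $\QNorm{L_i} = N$: if $p_i$ were real, then $(t - p_i)$ would be a real linear factor of the primal part $cQ$, forcing $(t - p_i)^2 \mid \QNorm{M} = N^n$, contradicting the irreducibility of~$N$. Hence the primal parts satisfy $\prod_{i=1}^n(t - p_i) = cQ$ in $\H[t]$ with each factor of norm~$N$. Writing $c = N^a$, the quaternion part $Q$ is reduced of degree $n - 2a$.

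Next I would analyze the absorption structure by tracking $\mrpf(P_k)$ through partial products $P_k := \prod_{i=1}^k(t - p_i)$. The sequence $\mrpf(P_k) = N^{a_k}$ is non-decreasing from $a_0 = 0$ to $a_n = a$ with unit-or-zero steps, so there are exactly $a$ absorption indices. By the ABC-lemma (Lemma~\ref{lem:a-b-c-lemma}), applied inductively through some linear factorization of the reduced residue $P_{k-1}/N^{a_{k-1}}$, an absorption at position $k$ forces $(t - p_k)$ to equal the conjugate of the rightmost linear factor in such a factorization, so this absorption event can always be ``localized'' to an adjacent conjugate pair after suitable re-factoring of the preceding block.

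The core step is to use local re-factorizations of adjacent pairs $L_i L_{i+1}$ in $\DH[t]$---replacing $L_i L_{i+1}$ by $L_i' L_{i+1}'$ of the same product but different primal parts---to rearrange the factorization so that all $a$ absorption events occur at consecutive positions $k_L + 1, \ldots, k_L + 2a$ for some $k_L$. The middle block $L_{k_L+1} \cdots L_{k_L+2a}$ then becomes a product of $a$ conjugate pairs with primal $N^a = c$, yielding the translational factor $M_m = c + \eps D'$; the remaining blocks form $M_L$ and $M_R$ with primal parts satisfying $Q_L Q_R = Q$, each being a product of monic linear motion polynomials by construction.

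The main obstacle will be rigorously justifying these local re-factorization moves. For a generic pair $L_i L_{i+1}$ the product admits a one-parameter family of alternative linear factorizations that preserve the motion polynomial structure, but pairs already forming a conjugate block of primal $N$, or pairs whose dual part obstructs a swap, require special handling. A likely route is induction on the total displacement of absorption events from their target positions, exploiting the uniform norm $\QNorm{L_i} = N$ together with the characterization of absorption via Lemma~\ref{lem:a-b-c-lemma} to ensure sufficient flexibility in local refactoring.
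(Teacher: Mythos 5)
Your proposal diverges from the paper's proof in a way that leaves a genuine gap. The paper does not rearrange the given factorization at all: writing $c=N^m$ and letting $v$ be the \emph{smallest} index such that $N$ divides $\prod_{u=1}^{v}(t-p_u)$, a repeated application of the ABC-lemma (using that $N$ cannot divide $\prod_{u=v+1}^{n}(t-p_u)$, since $M$ is reduced) shows that the conjugate pairs are automatically \emph{nested} around position $v$, i.e.\ $p_{v-1}=\Cj{p}_v$, $p_{v-2}=\Cj{p}_{v+1}$, \dots, $p_{v-m}=\Cj{p}_{v+m-1}$, so that $\prod_{u=v-m}^{v+m-1}(t-p_u)=c$ and the middle block $c+\eps D'$ is already a \emph{consecutive} sub-product of the factorization you started with. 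The three blocks are then just $\prod_{u=1}^{v-m-1}$, $\prod_{u=v-m}^{v+m-1}$, and $\prod_{u=v+m}^{n}$, and the conclusion is immediate.

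Your route instead hinges on ``local re-factorizations of adjacent pairs $L_iL_{i+1}$'' to move absorption events into consecutive positions, and you correctly flag this as the main obstacle --- but the obstacle is fatal rather than technical. Here every linear factor has norm $N$, so every adjacent pair has norm polynomial $N^2$; such a quadratic motion polynomial is \emph{not} generic, and the Bennett-flip mechanism (the ``second factorization'' of a generic quadratic, which requires two distinct irreducible factors of the norm) does not apply. Quadratic motion polynomials with primary norm may admit exactly one factorization, or infinitely many, or interact with the dual part in ways that block any swap; indeed, the failure of factorization flexibility in the primary-norm case is precisely the phenomenon this paper is written to analyze, so assuming ``sufficient flexibility in local refactoring'' is close to assuming what must be proved. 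Your preliminary observations (each $\QNorm{L_i}=N$, the absorption count equals $m$) are fine, but the argument as proposed cannot be completed without replacing the rearrangement step by the nesting argument above.
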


\begin{proof}
  Let $M = \prod_{u=1}^n (t - p_u - \eps d_u)$ be a factorization of $M$. If
  $\deg(c) = 0$, we can choose $Q_L + \eps D_L = \prod_{u=1}^v (t - p_u - \eps
  d_u)$, $Q_R + \eps D_R = \prod_{u=v+1}^n (t - p_u - \eps d_u)$ and $D' = 0$
  for any integer $v \in \{ 1,\ldots ,n-1 \}$. Hence, we may assume that
  $\deg(c) > 0$. By the assumption, we have $N^n = \QNorm{M} = c^2 \QNorm{Q}$
  and there exists $m$ such that $N^m = c$.
  Moreover, $c$ divides the primal part $\prod_{u=1}^n(t-p_u)$ of $M$. Let $v
  \in \{ 1,\ldots ,n \}$ be the smallest integer such that $N$ divides $P_1
  \coloneqq \prod_{u=1}^v (t-p_u)$. 
  By Lemma~\ref{lem:a-b-c-lemma} applied to $\prod_{u=1}^{v-2} (t-p_u)$, $t-p_{v-1}$, and $t-p_v$, we get $p_{v-1} = \Cj{p}_v$. Further,
$N$ cannot divide $P_2 \coloneqq
  \prod_{u=v+1}^n (t-p_u)$ because otherwise
  \begin{equation*}
    M = \prod_{u=1}^v(t-p_u-\eps d_u) \prod_{u=v+1}^n(t-p_u-\eps d_u) = (P_1 + \eps D_1)(P_2 + \eps D_2) = P_1 P_2 + \eps(P_1 D_2 + D_1 P_2)
  \end{equation*}
  for some quaternion polynomials $D_1$, $D_2 \in \H[t]$ and $N$ divides $M$, a
  contradiction to $M$ being reduced. An inductive application of  Lemma~\ref{lem:a-b-c-lemma}
yields
  \begin{equation*}
    p_{v-2} = \Cj{p}_{v+1},\
    p_{v-3} = \Cj{p}_{v+2}, \ldots,\
    p_{v-m} = \Cj{p}_{v+m-1}
  \end{equation*}
  and $c = \prod_{u=v-m}^{v+m-1}(t-p_u)$. We define $Q_L$, $Q_R$, $D_L$, $D_R$,
  and $D'$ such that
  \begin{equation*}
    Q_L + \eps D_L = \prod_{u=1}^{v-m-1}(t-p_u-\eps d_u), \quad
    c + \eps D' = \prod_{u=v-m}^{v+m-1}(t-p_u-\eps d_u), \quad
    Q_R + \eps D_R = \prod_{u=v+m}^n(t-p_u-\eps d_u).
\end{equation*}
  Therefore, the statement follows.
\end{proof}

\begin{cor}\label{cor:weakcor}
  Consider a bounded, monic, reduced motion polynomial $M = c + \eps D \in \DH[t]$ of degree 
 $n$ with $\QNorm{M} = c^2 = N^n$ for some irreducible, quadratic polynomial $N \in \R[t]$. If $M$ admits a factorization with monic linear motion polynomial factors, then it is of the form
  \begin{equation*}
    M =(t-p_1-\eps d_1) \cdots (t-p_n-\eps d_n)(t-\Cj{p}_n-\eps d'_n) \cdots (t-\Cj{p}_1-\eps d'_1),
  \end{equation*}
  where $p_i$, $d_i$, $d'_i \in \H$ are quaternions for $i=1, 2, \ldots ,n$.
\end{cor}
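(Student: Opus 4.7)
The plan is to specialize Lemma~\ref{lem:factorizationpattern} to the case at hand, namely where the quaternion part of the primal part of $M$ is trivial ($Q = 1$). With $Q = 1$ the primal part of $M$ is simply $c$, and in the lemma's decomposition $M = (Q_L + \eps D_L)(c + \eps D')(Q_R + \eps D_R)$ the primal of the middle factor already captures all of $c$. A degree comparison then forces the lemma's outer factors to collapse to the constant $1$, i.e., $Q_L + \eps D_L = Q_R + \eps D_R = 1$. Consequently $M$ coincides with the middle factor, which by the lemma is a product of monic linear motion polynomials, and the pairing $p_{v-k} = \Cj{p}_{v+k-1}$ for $k = 1, \ldots, m$ extracted in the proof of the lemma holds among all the linear factors of $M$.

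The remaining work is purely cosmetic: relabel the linear factors from the outside inward so as to match the symmetric notation of the corollary. With $Q_L = Q_R = 1$ the index range runs over a single contiguous block $u = v-m, \ldots, v+m-1$ of $2m$ entries whose primal parts are palindromic under conjugation. Assigning the names $p_1, \ldots, p_n$ to the primal parts of the left half (read from the outermost toward the centre) and using the conjugation pairing to identify the corresponding primal parts of the right half as $\Cj{p}_n, \ldots, \Cj{p}_1$ (read from the centre toward the outermost), and renaming the dual parts of the two halves as $d_1, \ldots, d_n$ and $d'_n, \ldots, d'_1$ respectively, reproduces verbatim the factorization displayed in the statement.

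The only real step of substance is the degree argument that forces $Q_L = Q_R = 1$; the conjugation symmetry is then inherited directly from the lemma. The main potential obstacle is therefore just the index bookkeeping between the ``start-to-end'' numbering used in the proof of Lemma~\ref{lem:factorizationpattern} and the ``centre-outward'' numbering of the corollary, but this translation is routine and introduces no new algebraic or kinematic content. In particular, no independent appeal to Lemma~\ref{lem:a-b-c-lemma} or Lemma~\ref{lem:g-a-b-lemma} beyond their use inside the proof of Lemma~\ref{lem:factorizationpattern} is needed.
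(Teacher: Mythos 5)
Your argument is correct and follows the paper's implied route: the corollary is stated without a separate proof as an immediate specialization of Lemma~\ref{lem:factorizationpattern} to $Q=1$, and your degree count on $Q = Q_L Q_R$ correctly collapses both outer factors to $1$, leaving $M$ equal to the middle factor whose conjugation-paired primal parts $p_{v-k}=\Cj{p}_{v+k-1}$ come straight from the lemma's proof.  One bookkeeping slip shared by your write-up and the paper's displayed formula: since $\deg M = n$, there are only $n$ linear factors in total (and $n$ is even because $c = N^{n/2}$), so each half of the palindrome should run over the indices $1,\ldots,n/2$, not $1,\ldots,n$ --- compare the $n=2$ base case in the proof of Lemma~\ref{lem:weaklemma}, which uses the corollary with exactly two linear factors $(t-p-\eps d_1)(t-\Cj{p}-\eps d_2)$.
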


Lemma~\ref{lem:factorizationpattern} allows us to formulate a necessary
condition for factorizability of translational motion polynomials.

\begin{lem}\label{lem:weaklemma}
  Consider a bounded, monic, reduced motion polynomial $M = c + \eps D \in
  \DH[t]$ of degree $n$ with $\QNorm{M} = c^2 = N^n$ for some monic irreducible
  quadratic polynomial $N \in \R[t]$. If $M$ admits a
  factorization with monic linear motion polynomial factors, then $c$ divides
  $\QNorm{D}$.
\end{lem}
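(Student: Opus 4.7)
The plan is to induct on the even integer $n$ (which is forced to be even because $c\in\R[t]$ is monic with $c^2=N^n$ and $N$ is irreducible, hence $c=N^{n/2}$; the Study condition moreover makes $D$ vectorial). Corollary~\ref{cor:weakcor} supplies a factorization $M=L_1L_2\cdots L_n$ with $L_i=t-p_i-\eps d_i$ and palindromic primal parts $p_{n-i+1}=\Cj{p}_i$, so in particular $(t-p_1)(t-p_n)=N$. The base case $n=0$ is trivial.

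For the inductive step with $n\ge 2$ I would isolate the middle subproduct $M_{\mathrm{mid}}:=L_2L_3\cdots L_{n-1}$. Centrality of $N$ in $\DH[t]$ lets me peel $(t-p_1)(t-p_n)=N$ off the real primal $c=N^{n/2}$ to see that the primal part of $M_{\mathrm{mid}}$ is $c':=N^{n/2-1}$; hence $M_{\mathrm{mid}}=c'+\eps D'$ with $D'$ vectorial, and the inductive hypothesis delivers $c'\mid\QNorm{D'}$. I would then expand $M=L_1M_{\mathrm{mid}}L_n$ modulo $\eps^2$ and use the Study condition $d_1(t-\Cj{p}_1)=-(t-p_1)\Cj{d}_1$ for $L_1$, together with the palindromic identity $p_n=\Cj{p}_1$, to collapse the mixed cross terms into a common left factor $(t-p_1)$. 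The outcome is $D=(t-p_1)\,E$ with $E:=D'(t-\Cj{p}_1)+c'(\Cj{d}_1-d_n)$. Since $\QNorm{E}\in\R[t]$ is central, this yields $\QNorm{D}=(t-p_1)\QNorm{E}(t-\Cj{p}_1)=N\cdot\QNorm{E}$, reducing the task to proving $c'\mid\QNorm{E}$.

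Expanding $E\Cj{E}$ produces exactly three summands: $N\QNorm{D'}$, a cross term visibly carrying a factor of $c'$, and $c'^2\QNorm{\Cj{d}_1-d_n}$. The last two are manifestly multiples of $c'=N^{n/2-1}$, and the first is divisible by $c'$ because the inductive hypothesis $N^{n/2-1}\mid\QNorm{D'}$ is strictly stronger than the needed $N^{n/2-2}\mid\QNorm{D'}$. Hence $c'\mid\QNorm{E}$, so $c=Nc'\mid N\QNorm{E}=\QNorm{D}$, closing the induction.

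The main technical step is producing the left factorization $D=(t-p_1)\,E$: it requires simultaneously using the Study condition of $L_1$ and the palindromic identity $p_n=\Cj{p}_1$ while tracking the non-commutativity of $\H[t]$ carefully---interestingly, no Study identity for $L_n$ is invoked. A minor subtlety is that $M_{\mathrm{mid}}$ need not inherit the \emph{reduced} hypothesis from $M$, so the induction is cleanest when formulated for all monic translational motion polynomials with norm a power of $N$ that admit a linear factorization, of which the lemma as stated is the special case.
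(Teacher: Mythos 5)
Your proof is correct and follows essentially the same inductive structure as the paper's: peel off the outer pair $L_1,L_n$ (with $p_n=\Cj{p}_1$ coming from Corollary~\ref{cor:weakcor}), apply the inductive hypothesis to the middle subproduct, and use the Study condition for $L_1$. The bookkeeping is a bit more elegant than the paper's: rather than expanding $\QNorm{D}$ directly into six summands, you first extract the left factor $D=(t-p_1)E$ with $E=D'(t-\Cj{p}_1)+c'(\Cj{d}_1-d_n)$ (this step genuinely uses both the Study identity $d_1(t-\Cj{p}_1)=-(t-p_1)\Cj{d}_1$ and $p_n=\Cj{p}_1$), which reduces the task to showing $c'\mid\QNorm{E}$ with only three summands to control; the observation that no Study identity for $L_n$ is needed is correct. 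The degree count $c'=N^{n/2-1}$ for the primal part of $M_{\mathrm{mid}}$ is also right, as is the verification $N^{n/2-2}\mid\QNorm{D'}$ sufficing for the first summand.

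One remark you make is in fact unnecessary, and your proposed fix would be problematic if taken literally. You claim $M_{\mathrm{mid}}$ ``need not inherit the reduced hypothesis from $M$,'' but it does: $\mrpf(M_{\mathrm{mid}})$ is a real (hence central) factor of $M_{\mathrm{mid}}$, so it divides $M=L_1M_{\mathrm{mid}}L_n$; since $M$ is reduced this forces $\mrpf(M_{\mathrm{mid}})=1$. Thus the inductive hypothesis applies to $M_{\mathrm{mid}}$ exactly as stated, and the paper's proof tacitly relies on the same fact. Your suggested reformulation --- dropping ``reduced'' and inducting over all monic translational motion polynomials with primary norm admitting a linear factorization --- would actually require re-proving Corollary~\ref{cor:weakcor} in that wider generality, and the corollary's palindromic structure does lean on reducedness (it is used in the proof of Lemma~\ref{lem:factorizationpattern} to exclude $N\mid M$). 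Fortunately none of this is needed: $M_{\mathrm{mid}}$ is automatically reduced, so the induction closes as you wrote it.
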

\begin{proof}
  Clearly, $n$ is even.
  We give a proof by induction on $n$. For $n=2$, the factorization has form \begin{equation*}
    M = c + \eps D = N + \eps D = (t-p-\eps d_1)(t-\Cj{p}-\eps d_2)
  \end{equation*}
  by Corollary~\ref{cor:weakcor} and therefore $D= -d_1(t-\Cj{p})-(t-p)d_2$. The
  polynomial $(t-p-\eps d_1)$ is a motion polynomial,
  i.e. $(t-p)\Cj{d}_{1} + d_{1}(t-\Cj{p}) = 0$. The norm of the dual
  part $D$ simplifies to
  \begin{align*}
    \QNorm{D} &= \Cj{D}D = (-(t-p)\Cj{d}_1-\Cj{d}_2(t-\Cj{p})) (-d_1(t-\Cj{p})-(t-p)d_2) \\
              &= (t-p)\Cj{d}_1d_1(t-\Cj{p}) + \Cj{d}_2(t-\Cj{p})d_1(t-\Cj{p}) + (t-p)\Cj{d}_1(t-p)d_2 + \Cj{d}_2(t-\Cj{p})(t-p)d_2 \\
              &= N (\QNorm{d_1} -\Cj{d}_2\Cj{d}_1 - d_1d_2 + \QNorm{d_2}) = c (\QNorm{d_1} -\Cj{d}_2\Cj{d}_1 - d_1d_2 + \QNorm{d_2}).
  \end{align*}

  To perform the induction step, assume $M$ has degree $n+2$. Again, by
  Corollary~\ref{cor:weakcor} we have
  \begin{equation*}
    M = c + \eps D = N^{\frac{n+2}{2}} +\eps D = (t-p-\eps d_1)(N^{\frac{n}{2}}+\eps D')(t-\Cj{p}-\eps d_2),
  \end{equation*}
  where $N^{\frac{n}{2}}+\eps D'$ admits a factorization and we obtain
  \begin{equation*}
    D = (t-p) D' (t-\Cj{p}) - (t-p)N^{\frac{n}{2}}d_2 - d_1N^{\frac{n}{2}}(t-\Cj{p}).
  \end{equation*}
  Thus,
  \begin{multline*}
    \QNorm{D} = \Cj{D}D
    = N^2 \QNorm{D'} + N^{n+1}\QNorm{d_2} + N^{n+1}\QNorm{d_1} + N^{n}[\Cj{d}_2(t-\Cj{p})d_1(t-\Cj{p}) + (t-p)\Cj{d}_1(t-p)d_2] \\
    - N^{\frac{n+2}{2}}[(t-p)\Cj{D'}d_2 + \Cj{d}_2D'(t-\Cj{p})] - N^{\frac{n}{2}}[(t-p)\Cj{D'}(t-\Cj{p})d_1(t-\Cj{p}) + (t-p)\Cj{d}_1(t-p)D'(t-\Cj{p})].
  \end{multline*}
  By the induction hypothesis, $N^{\frac{n}{2}}$ divides $\QNorm{D'}$ whence
  $N^{\frac{n+2}{2}}$ divides the first summand. It is clear that it also divides the remaining terms.
Therefore, the statement follows.
\end{proof}

In order to extend the statement of Lemma~\ref{lem:weaklemma} to
non-translational motion polynomials, we derive some further properties on the
polynomials $Q_L$ and $Q_R$ from Lemma~\ref{lem:factorizationpattern}.

\begin{lem}\label{lem:division-property-gcd}
  Consider a bounded, monic, reduced motion polynomial $M = cQ + \eps D \in
  \D\H[t]$ of degree $n$  with $\mrpf(Q) = 1$ and $\QNorm{M} =
  c^2\QNorm{Q} = N^n$ for some irreducible, quadratic polynomial $N \in \R[t]$.
Let $M$ admit a factorization into monic linear motion
  polynomial factors and consider polynomials $Q_L$ and $Q_R$ from
  Lemma~\ref{lem:factorizationpattern}. Define $g_L \coloneqq \mygcd(c,\Cj{Q}D)$
  and $g_R \coloneqq \mygcd(c,D\Cj{Q}) \in \R[t]$. Then $g_L$ divides
  $\QNorm{Q_L}$ and $g_R$ divides $\QNorm{Q_R}$.
\end{lem}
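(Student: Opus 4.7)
My plan is to apply Lemma~\ref{lem:factorizationpattern} to decompose $M=(Q_L+\eps D_L)(c+\eps D')(Q_R+\eps D_R)$ and analyze $\Cj{Q}D$ modulo~$c$. Expanding and using that $c\in\R[t]$ commutes with every quaternion, the primal part gives $Q=Q_L Q_R$ and the dual part is $D=cQ_L D_R+Q_L D' Q_R+cD_L Q_R$. Left-multiplying by $\Cj{Q}=\Cj{Q}_R\Cj{Q}_L$ and using $\Cj{Q}_L Q_L=\QNorm{Q_L}$, the contributions divisible by~$c$ collapse and I obtain
\[
\Cj{Q}D\;=\;\QNorm{Q_L}\,\Cj{Q}_R(cD_R+D'Q_R)+c\,\Cj{Q}_R\Cj{Q}_L D_L Q_R\;\equiv\;\QNorm{Q_L}\,\Cj{Q}_R D'Q_R\pmod{c},
\]
so $g_L=\mrpf(c,\Cj{Q}D)=\mrpf(c,\QNorm{Q_L}\,\Cj{Q}_R D'Q_R)$. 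The statement on $g_R$ will then follow by applying the same argument to $\Cj M$, whose linear factorization is obtained by reversing and conjugating that of $M$; this exchanges the roles of $Q_L$ and $\Cj Q_R$, and (using $Q\Cj D+D\Cj Q=0$) transforms $g_L$ into~$g_R$.

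Since $\QNorm M=N^n$ is a prime power, I write $c=N^m$, $\QNorm{Q_L}=N^a$, and argue with the $N$-adic valuation~$v_N$. Then $v_N(g_L)=\min(m,\,a+v_N(\Cj{Q}_R D'Q_R))$, so the divisibility $g_L\mid\QNorm{Q_L}$ is equivalent to $v_N(g_L)\le a$. This is immediate when $m\le a$, so the substantive case is $m>a$, where it reduces to showing $N\nmid\Cj{Q}_R D'Q_R$. Two preparatory facts are available. First, Lemma~\ref{lem:weaklemma} applied to the translational factor $c+\eps D'$ (which is itself factorizable by Lemma~\ref{lem:factorizationpattern}) yields $c\mid\QNorm{D'}$, and in particular $N\mid D'\Cj{D'}$. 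Second, since $M$ is reduced, so is the right sub-product $(c+\eps D')(Q_R+\eps D_R)=cQ_R+\eps(cD_R+D'Q_R)$, for otherwise a non-trivial real factor would propagate to~$M$. Reducing its dual part modulo~$N$ (which divides~$c$) and using $\mrpf(Q_R)=1$ then gives $N\nmid D'Q_R$.

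The main obstacle is the final step. Assume for contradiction that $N\mid\Cj{Q}_R D'Q_R$. The ABC-Lemma (Lemma~\ref{lem:a-b-c-lemma}) applied to $\Cj{Q}_R\cdot D'\cdot Q_R$, together with $N\mid D'\Cj{D'}$ and the just-established $N\nmid D'Q_R$, forces $N\mid\Cj{Q}_R D'$. The AB-Lemma (Lemma~\ref{lem:g-a-b-lemma}) then produces a monic linear polynomial $R=t-q$ with $\QNorm R=N$ such that $R$ is a right factor of $\Cj{Q}_R$ (equivalently, $t-q^*$ is a left factor of $Q_R$) and $R^*=t-q^*$ is a left factor of $D'$. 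I plan to convert this structural configuration into a contradiction with the reducedness of~$M$. The left-most linear factor of $Q_R+\eps D_R$ has primal $t-q^*$, i.e., $p_{v+m}=q^*$; a careful expansion of the dual part $D'=\sum_k(t-p_{v-m})\cdots(-d_k)\cdots(t-p_{v+m-1})$ of the middle block, using the pairing $p_{v-m+i}=p_{v+m-1-i}^*$ from Lemma~\ref{lem:factorizationpattern} and the constraint $m>a$, pins down $p_{v-m}=q^*$ and forces certain dual parts $d_k$ in the middle block to vanish, which in turn manufactures an additional real factor $N$ in the dual part of the full product~$M$ and so contradicts $\mrpf(M)=1$. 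The base case $m=1$ of this combinatorial check can be verified by direct expansion (the product of the three factors $(t-q^*)(t-q-\eps d_v)(t-q^*-\eps d_{v+m})$ together with the surrounding factors always develops a common real factor $N$); the inductive step reduces to this case by peeling off one pair of linear factors from the middle block.
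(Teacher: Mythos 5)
Your proof follows essentially the same route as the paper until the very last step, and everything up to that point is correct: the expansion $\Cj{Q}D \equiv \QNorm{Q_L}\,\Cj{Q}_R D'Q_R \pmod{c}$, the $N$-adic valuation reduction, the invocation of Lemma~\ref{lem:weaklemma} to get $c \mid \QNorm{D'}$, the observation that reducedness of the right sub-product $cQ_R + \eps(cD_R + D'Q_R)$ forces $N \nmid D'Q_R$, and the application of the ABC-Lemma to conclude $N \mid \Cj{Q}_R D'$ under the contradiction hypothesis. The symmetry argument passing from $g_L$ to $g_R$ via conjugation of $M$ (using $Q\Cj{D} + D\Cj{Q} = 0$) is also sound.

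The gap is the final step. You have reached $N \mid \Cj{Q}_R D'$ and $N \nmid D'Q_R$ but then embark on an elaborate and unverified combinatorial analysis of the linear factors of the middle block $c + \eps D'$ (pinning down $p_{v-m}$, forcing dual parts $d_k$ to vanish, manufacturing a real factor of $M$). You explicitly acknowledge that this is a plan, not a proof, and the outlined induction on $m$ is not carried out. This entire machinery is unnecessary: the crucial observation you are missing is that $D'$ is \emph{vectorial}. Since $c + \eps D'$ is a motion polynomial with real primal part $c$, the Study condition $c(D' + \Cj{D'}) = 0$ gives $\Cj{D'} = -D'$. Conjugating $N \mid \Cj{Q}_R D'$ and using that divisibility by the real polynomial $N$ is preserved under quaternion conjugation, we get $N \mid \Cj{(\Cj{Q}_R D')} = \Cj{D'} Q_R = -D'Q_R$, which contradicts $N \nmid D'Q_R$ in one line. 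This is exactly what the paper does (phrased for $g_R$ rather than $g_L$): after deriving $N \mid D'\Cj{Q}_L$ by the ABC-Lemma, it notes that $\Cj{(Q_LD')} = \Cj{D'}\Cj{Q}_L = -D'\Cj{Q}_L$, giving an immediate contradiction. As written, your proof does not establish the statement; the intended combinatorial closing would need to be fully worked out, and even then it is a far more complicated path than the available two-line finish.
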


\begin{proof}
  We only prove the lemma for $g_R$ as the statement for $g_L$ is similar. If $\deg(g_R) = 0$, i.e. $g_R = 1$, the statement is trivial, so we consider the case $0 < \deg(g_R) \leq \deg(c)$. We assume that $g_R$ does not divide $\QNorm{Q_R}$ and show that this yields to a contradiction. By Lemma~\ref{lem:factorizationpattern} we can write $M = cQ + D = (Q_L + \eps D_L) (c + \eps D') (Q_R + \eps D_R)$ with $c$ dividing $\QNorm{D'}$ by Lemma~\ref{lem:weaklemma}. Thus, we have $Q = Q_LQ_R$, $D = c(Q_LD_R + D_LQ_R) + Q_LD'Q_R$ and
  \begin{align*}
    D\Cj{Q} = D\Cj{Q}_R\Cj{Q}_L = c(Q_LD_R\Cj{Q}_R\Cj{Q}_L + D_LQ_R\Cj{Q}_R\Cj{Q}_L) + Q_LD'Q_R\Cj{Q}_R\Cj{Q}_L.
  \end{align*}
  By definition, $g_R$ divides $c$ and $D\Cj{Q}$, hence it divides
  $Q_LD'Q_R\Cj{Q}_R\Cj{Q}_L = Q_LD'\Cj{Q}_L\QNorm{Q_R}$ as well. Since $c =
  N^{\frac{m}{2}}$ for some even integer $m \in \{ 2,4,\ldots ,n \}$ we conclude
  that $g_R = N^{\frac{j}{2}}$ for some even integer $j \in \{ 2,4,\ldots ,m
  \}$. By the assumption, $g_R$ does not divide $\QNorm{Q_R}$ and therefore $N$
  divides $Q_LD'\Cj{Q}_L$. Since $N$ divides $c$ and $c$ divides $\QNorm{D'}$, we have that $N$ divides $\QNorm{D'}$. But from the decomposition
$M = cQ + D = (Q_L + \eps D_L) (c + \eps D') (Q_R + \eps D_R) = (cQ_L + \eps(cD_L + Q_LD'))(Q_R + \eps D_R)$,
we infer that $N$ does not divide 
$Q_LD'$ because otherwise
  $N$ would be a factor of $M$. We conclude that $N$ divides $D'\Cj{Q}_L$ by Lemma~\ref{lem:a-b-c-lemma}. By the definition of motion polynomial $c+\eps D'$, we notice that $\Cj{D'}=-D'$. Moreover, $N$ does not divide the conjugate of $Q_LD'$, and $\Cj{(Q_LD')}=\Cj{D'}\Cj{Q}_L = -D'\Cj{Q}_L$. This is a
  contradiction.
\end{proof}

\begin{prop}\label{prop:singlepattern}
  Consider a bounded, monic, reduced motion polynomial $M = cQ + \eps D \in
  \D\H[t]$ of degree $n$ with $\mrpf(Q) = 1$ and $\QNorm{M} =c^2\QNorm{Q}
  = N^n$ for some irreducible, quadratic polynomial $N \in \R[t]$. If $M$ admits a factorization into monic linear motion polynomial factors, then $cg_L$ or $cg_R$ divides $\QNorm{D}$, where $g_L \coloneqq
  \mygcd(c,\Cj{Q}D)$ and $g_R \coloneqq \mygcd(c,D\Cj{Q})$.
\end{prop}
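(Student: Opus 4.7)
The plan is to leverage the three-factor decomposition $M = M_L M_C M_R$ from Lemma~\ref{lem:factorizationpattern}, compute $\QNorm{D}$ explicitly in terms of this decomposition, and then uncover a palindromic structure in the cross terms that will force divisibility by $g_L$ or $g_R$. Throughout, write $c = N^{\ell}$, $\QNorm{Q_L} = N^{k_L}$, $\QNorm{Q_R} = N^{k_R}$, $g_L = N^{a_L}$, $g_R = N^{a_R}$ (all are pure powers of the irreducible $N$). Lemma~\ref{lem:division-property-gcd} then reads as $a_L \leq k_L$ and $a_R \leq k_R$, while Lemma~\ref{lem:weaklemma} applied to the translational middle factor $M_C = c + \eps D'$ gives $c \mid \QNorm{D'}$.

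Expanding $M = (Q_L + \eps D_L)(c + \eps D')(Q_R + \eps D_R)$ yields $D = cE + Q_L D' Q_R$ with $E := Q_L D_R + D_L Q_R$, from which a direct computation gives
\begin{equation*}
  \QNorm{D} = c^2 E\Cj{E} + c(X + \Cj{X}) + \QNorm{Q_L}\QNorm{Q_R}\QNorm{D'}, \qquad X := Q_L D' Q_R \Cj{E}.
\end{equation*}
The first and the third terms are divisible by both $cg_L$ and $cg_R$: the first because $a_L, a_R \leq \ell$, and the third because $c \mid \QNorm{D'}$ together with $a_L \leq k_L \leq k_L + k_R$ (and symmetrically). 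Consequently the entire problem reduces to showing that at least one of $g_L$, $g_R$ divides the real polynomial $X + \Cj{X}$.

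The main obstacle is the following rearrangement, obtained by expanding $\Cj{E} = \Cj{D}_R \Cj{Q}_L + \Cj{Q}_R \Cj{D}_L$ (and analogously for $E$), using that $D'$ is vectorial, and grouping the four resulting terms according to whether the ``outer'' factor is $Q_L\cdots\Cj{Q}_L$ or comes with the scalar $\QNorm{Q_R}$:
\begin{equation*}
  X + \Cj{X} = \QNorm{Q_L}(Y + \Cj{Y}) + \QNorm{Q_R}(Z + \Cj{Z}), \qquad Y := D' Q_R \Cj{D}_R, \quad Z := Q_L D' \Cj{D}_L.
\end{equation*}
Here the two ``outer'' terms collapse via $Q_L(Y + \Cj{Y})\Cj{Q}_L = (Y + \Cj{Y})\QNorm{Q_L}$, which works because $Y + \Cj{Y} \in \R[t]$ commutes with everything; the remaining two terms are already a scalar multiple of $\QNorm{Q_R}$. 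Verifying this identity carefully is where all the work is concentrated.

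With the identity in hand, the proof closes by a short case distinction. If $a_L \leq k_R$, then $g_L = N^{a_L}$ divides both $\QNorm{Q_L}$ and $\QNorm{Q_R}$, hence $g_L \mid X + \Cj{X}$ and therefore $cg_L \mid \QNorm{D}$; the symmetric case $a_R \leq k_L$ yields $cg_R \mid \QNorm{D}$. Both inequalities cannot fail at once, for together with $a_L \leq k_L$ and $a_R \leq k_R$ this would force $k_R < a_L \leq k_L$ and $k_L < a_R \leq k_R$ simultaneously, a contradiction. Hence $cg_L$ or $cg_R$ divides $\QNorm{D}$, as desired.
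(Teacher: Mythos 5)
Your proposal is correct and follows essentially the same route as the paper's proof: the same three-factor decomposition from Lemma~\ref{lem:factorizationpattern}, the same expansion of $\QNorm{D}$ into the three blocks, the same use of Lemma~\ref{lem:weaklemma} for $c\mid\QNorm{D'}$ and of Lemma~\ref{lem:division-property-gcd} plus the scalar identity $Y+\Cj{Y}\in\R[t]$ to handle the cross term (your $X+\Cj{X}$ is the paper's $E_1$). The only cosmetic difference is that you close with an explicit exponent case distinction where the paper assumes $g_L\mid g_R$ without loss of generality via conjugation of $M$.
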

\begin{proof}
  Without loss of generality we assume that $\deg(g_R) \geq \deg(g_L)$, i.e.
  $g_L$ divides $g_R$, since $M$ admits a factorization if and only if $\Cj{M}$
  does, and conjugation of $M$ interchanges $g_R$ and $g_L$. By
  Lemma~\ref{lem:factorizationpattern} we can write
  \begin{equation*}
    M = cQ + D = (Q_L + \eps D_L) (c + \eps D') (Q_R + \eps D_R) \quad \text{and} \quad D = c(Q_LD_R + D_LQ_R) + Q_LD'Q_R
  \end{equation*}
  and compute
  \begin{equation}\label{eq-E1E2}
  \begin{aligned}
    \QNorm{D} = &(c(Q_LD_R + D_LQ_R) + Q_LD'Q_R) (c(\Cj{D}_R\Cj{Q}_L + \Cj{Q}_R\Cj{D}_L) + \Cj{Q}_R\Cj{D'}\Cj{Q}_L) \\
                        = &c^2(Q_LD_R\Cj{D}_R\Cj{Q}_L + Q_LD_R\Cj{Q}_R\Cj{D}_L + D_LQ_R\Cj{D}_R\Cj{Q}_L + D_LQ_R\Cj{Q}_R\Cj{D}_L) \\
                          &+ c\underbrace{(Q_LD_R\Cj{Q}_R\Cj{D'}\Cj{Q}_L + D_LQ_R\Cj{Q}_R\Cj{D'}\Cj{Q}_L + Q_LD'Q_R\Cj{D}_R\Cj{Q}_L + Q_LD'Q_R\Cj{Q}_R\Cj{D}_L)}_{\eqqcolon E_1} \\
                          &+ \underbrace{Q_LD'Q_R\Cj{Q}_R\Cj{D'}\Cj{Q}_L}_{\eqqcolon E_2}.
  \end{aligned}
  \end{equation}
  It is sufficient to show that $g_L$ divides $E_1$ and $c g_L$ divides $E_2$.
  We have 
  \begin{equation}\label{eq-E1}
  \begin{aligned}
    E_1 &= \QNorm{Q_R}D_L\Cj{D'}\Cj{Q}_L + \QNorm{Q_R}Q_LD'\Cj{D}_L + Q_LD_R\Cj{Q}_R\Cj{D'}\Cj{Q}_L + Q_LD'Q_R\Cj{D}_R\Cj{Q}_L \\
        &= \QNorm{Q_R}D_L\Cj{D'}\Cj{Q}_L + \QNorm{Q_R}Q_LD'\Cj{D}_L + Q_L\underbrace{(D_R\Cj{Q}_R\Cj{D'} + D'Q_R\Cj{D}_R)}_{\in \R[t]}\Cj{Q}_L \\
        &= \QNorm{Q_R}D_L\Cj{D'}\Cj{Q}_L + \QNorm{Q_R}Q_LD'\Cj{D}_L + \QNorm{Q_L}(D_R\Cj{Q}_R\Cj{D'} + D'Q_R\Cj{D}_R)
  \end{aligned}
  \end{equation}
  is divisible by $g_L$ by Lemma~\ref{lem:division-property-gcd} since $g_L$
  divides $g_R$. Finally, $c$ divides $\nu(D')$ by Lemma~\ref{lem:weaklemma} and
  therefore $c g_L$ divides $E_2 = \QNorm{Q_L} \nu(D') \QNorm{Q_R}$.
\end{proof}

\begin{example}
  The factorization $M = cQ + \eps D=(Q_L + \eps D_L) (c + \eps D') (Q_R + \eps
  D_R)$ given by Lemma~\ref{lem:factorizationpattern} need not be unique. An
  example is
  \begin{align*}
(t^2+1)(t-\qi)^3+\eps \qi (t-\qi)^3 
    &= \left((t-\qi)^2+\eps \tfrac{\qj}{4}(t+\qi)\right) \left(t^2+1 +\eps (\qi - \tfrac{5}{4}\qj t + \tfrac{3}{4}\qk )\right) \left(t-\qi+\eps\qj \right) \\
    &= \left((t-\qi)+\eps \tfrac{\qj}{4}\right) \left(t^2+1 +\eps (\qi - \tfrac{5}{4}\qj t + \tfrac{3}{4}\qk )\right) \left((t-\qi)^2+\eps\qj t+\eps\qk\right).
  \end{align*}
  This motion polynomial describes the composition of a vertical Darboux
  motion, cf.~\cite[Chapter~IX, \S~3]{bottema90} or \cite{li15darboux}, with a
  rotation around the Darboux motion's axis. Motions of this type have been
  investigated by W.~Kautny in \cite{kautny56}. More precisely, our motion
  belongs to frequency $n = \frac{1}{3}$ in Kautny's terminology.
\end{example}

Since our proof of Lemma~\ref{lem:factorizationpattern} relies on a
factorization with linear factors of $M$, it is not constructive. Therefore we
are going to describe a constructive procedure to obtain $M_L=Q_L+\eps D_L$,
$M_C=c+\eps D'$, and $M_R=Q_R+\eps D_R$. The pseudo-code is given in
Algorithm~\ref{alg:singlepattern}, and the correctness of the algorithm is proved in Proposition~\ref{prop:alg_singlepattern}.

\begin{algorithm}
  \caption{\texttt{Factor3}: Factorization algorithm for motion polynomials of a primary norm}
  \label{alg:singlepattern}
  \begin{algorithmic}[1]
    \Require A bounded monic reduced motion polynomial $M=P+\eps D \in \D\H[t]$, with $\deg M = n \ge 1$, $c=
    \mrpf P$, $\deg(c)>0$, $P = cQ$, and $\QNorm{M}=N^{n}$ for a real irreducible quadratic
    polynomial $N$ such that
    $g \coloneqq \mygcd(c, \Cj{Q}D)$ divides $\mygcd(c, D\Cj{Q})$ and
    $cg$ divides~$\QNorm{D}$
    \Ensure { A triple $(M_L,M_C,M_R)$ of monic motion polynomials such that $M = M_L M_C M_R$, where $M_C = c + \eps D'$  is a product of two generic motion polynomials and $D' \in \H[t]$.}
\If{$\deg(Q)=0$}
     \State  \Return $(1, M, 1)$     
    \Else
    \If{$N$ divides $\frac{1}{g}\Cj{Q}D$} 
    \State    Pick  $q\in\H$ which does not commute with $p$, where $t-p=\lgcd(N,Q)$,
    \State $Q_L \adef \lgcd(g, Q)$, $D_L \adef qQ_L-Q_Lq$\Else 
    \State 
    $Q_L \adef \lgcd(g, Q)$, $D_L \adef 0$
    \EndIf
    \EndIf 
    \State
     ${D'_R} \adef \frac{1}{g}(c\Cj{D}_LQ + \Cj{Q}_LD)$ and $Q_R \adef \frac{\Cj{Q}_LQ}{g}$
    \State $Q_c \adef \lgcd(c, {D'_R})$, 
     $M_r \adef \Cj{Q}_cQ_R + \eps \frac{\Cj{Q}_c{D'_R}}{c}$ 
\State  Compute a factorization $M_r=L_1\cdots L_{\deg M_r}$ into monic linear factors using Algorithm~\ref{galg}\State   $M_L \adef Q_L+\eps D_L$, 
    $M_C \adef Q_c  L_1L_2\cdots L_{{\deg c}/{2}}$, and $M_R \adef L_{{\deg c}/{2}+1}\cdots L_{\deg M_r}$ \\
\Return $(M_L, M_C, M_R)$
  \end{algorithmic}
\end{algorithm}

{
\begin{prop}
  \label{prop:alg_singlepattern}
  For a  bounded, monic, reduced motion   polynomial $M=P+\eps D \in \D\H[t]$, with $\deg M = n > 1$, $c=
    \mrpf P$, $\deg(c)>0$, $P = cQ$, and $\QNorm{M}=N^{n}$ for a real, irreducible, quadratic
    polynomial $N$ such that
    $g = g_L \coloneqq \mygcd(c, \Cj{Q}D)$ divides $\mygcd(c, D\Cj{Q})$ and
    $cg$ divides $\QNorm{D}$, Algorithm~\ref{alg:singlepattern} outputs a
    triple $(M_L,M_C,M_R)$ of monic motion polynomials such that $M = M_L M_C M_R$, 
    where $M_L$ and $M_R$ are generic motion polynomials, $M_C=  c + \eps D' $ is a product of two generic motion polynomials, and $D'\in\H[t]$.
\end{prop}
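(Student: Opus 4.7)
The plan is to trace Algorithm~\ref{alg:singlepattern} step by step, verifying well-definedness of each constructed polynomial, the motion-polynomial and genericity properties of the factors, and the product identity $M = M_L M_C M_R$. The base case $\deg Q = 0$ (where the algorithm returns $(1, M, 1)$) is handled separately: since the hypothesis reduces to $c \mid \QNorm{D}$, Corollary~\ref{cor:alg_galg} applied with $G = c$ to $M = c + \eps D$ realizes $M$ as a product of two generic motion polynomials.

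For $\deg Q > 0$, the AB-lemma (Lemma~\ref{lem:g-a-b-lemma}) applied to $g \mid \Cj{Q}D$ --- using $\mrpf(g, \Cj{Q}) = 1$, which follows from $\mrpf(Q) = 1$ and $g$ being a power of the irreducible quadratic $N$ --- produces the central identity $g = \QNorm{Q_L}$ with $Q_L = \lgcd(g, Q)$. Indeed, $\rgcd(g, \Cj{Q}) = \lgcd(g, Q)^{*} = \Cj{Q}_L$ by the conjugation symmetry of the real polynomial $g$, and the AB-lemma forces $\rgcd(g, \Cj Q) = \lgcd(g, D)^{*}$, whence $\lgcd(g, D) = Q_L$ and $g = \Cj{Q}_L Q_L = \QNorm{Q_L}$. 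It follows that $\Cj{Q}_L Q = \QNorm{Q_L}\tilde Q$ (writing $Q = Q_L\tilde Q$) is divisible by $g$, so $Q_R$ is polynomial. The motion identity $Q\Cj{D} + D\Cj{Q} = 0$, obtained from $M\Cj{M} \in \R[t]$ after cancelling the central real factor $c$, combined with the hypothesis $g \mid D\Cj{Q}$, similarly gives $g \mid c\Cj{D}_L Q + \Cj{Q}_L D$, so $D'_R$ is polynomial.

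For $M_L = Q_L + \eps D_L$, the motion-polynomial condition $Q_L\Cj{D}_L + D_L\Cj{Q}_L = 0$ is trivial when $D_L = 0$ and follows by direct expansion when $D_L = qQ_L - Q_L q$, using $q + \Cj{q} \in \R$ and the centrality of the real polynomial $\QNorm{Q_L}$. Genericity $\mrpf(M_L) = 1$ reduces to $\mrpf(Q_L) = 1$, inherited from $Q_L \mid Q$; the non-zero branch is engineered precisely to prevent $N$ from becoming a common real factor of the primal and dual parts of $M_L$ in the exceptional case $N \mid \tfrac{1}{g}\Cj{Q}D$. A direct expansion then yields $\Cj{M}_L M = g(cQ_R + \eps D'_R) = \QNorm{Q_L}(cQ_R + \eps D'_R)$, and left-multiplication by $M_L$ (using $M_L\Cj{M}_L = \QNorm{Q_L}$) gives $M = M_L(cQ_R + \eps D'_R)$.

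The analogous AB-lemma argument, this time applied to a divisibility derived from the hypothesis $cg \mid \QNorm{D}$ propagated through the construction of $D'_R$, yields $\QNorm{Q_c} = c$ for $Q_c = \lgcd(c, D'_R)$. Hence $M_r = \Cj{Q}_c Q_R + \eps \Cj{Q}_c D'_R/c$ is a polynomial (the dual part being polynomial because $Q_c \mid D'_R$ and $\QNorm{Q_c} = c$), its primal part $\Cj{Q}_c Q_R$ is $\mrpf$-free (since $Q_c$ absorbed the real obstruction), and so $M_r$ is a generic monic motion polynomial. Applying Corollary~\ref{cor:alg_galg} to $M_r$ with $G = c$ produces a factorization $M_r = M_1 M_2$ with $\QNorm{M_1} = c$; Algorithm~\ref{galg} then decomposes $M_1$ into $\deg c/2$ monic linear motion polynomials, and by the freedom in choosing the quadratic factor at each recursive step of that algorithm, $M_1$ can be taken to have primal part exactly $\Cj{Q}_c$. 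Thus $M_C = Q_c M_1$ has primal part $Q_c\Cj{Q}_c = c$ and is a product of the two generic motion polynomials $Q_c$ and $M_1$; $M_R = M_2$ is generic; and the direct computation $Q_c M_r = cQ_R + \eps D'_R$ confirms $M_C M_R = cQ_R + \eps D'_R$ and hence $M = M_L M_C M_R$. The main obstacle is the clean derivation of $\QNorm{Q_c} = c$ from the input hypotheses via the second AB-lemma application, and the subsequent verification that Algorithm~\ref{galg} can be driven to produce $M_1$ with primal part exactly $\Cj{Q}_c$.
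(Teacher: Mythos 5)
Your outline follows the algorithm's steps, and the application of the AB-Lemma to obtain $g=\QNorm{Q_L}$, $\lgcd(g,D)=Q_L$ is correct, but there are several genuine gaps and errors. First, the base case is handled incorrectly: Corollary~\ref{cor:alg_galg} requires $\mrpf(G,P)=1$, but with $G=c$ and $P=c$ (translational case) you have $\mrpf(G,P)=c\neq 1$ since $\deg c>0$, so the corollary simply does not apply. The paper instead applies Lemma~\ref{lem:g-a-b-lemma} to $A=D$, $B=\Cj{D}$, $F=c$ (legitimate because $\mrpf(c,D)=1$ follows from $M$ being reduced) to split $D=M_1M_2$ with $c=M_1\Cj{M}_1$ and hence $M=M_1(\Cj{M}_1+\eps M_2)$.

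Second, and most importantly, you explicitly flag as "the main obstacle" the derivation of $\QNorm{Q_c}=c$ (equivalently $c\mid D'_R\Cj{D'_R}$), but this is precisely the technical heart of the proposition. The paper does not get this for free from the hypothesis $cg\mid\QNorm{D}$; it requires the case distinction on whether $N\mid\tfrac{1}{g}\Cj{Q}D$, a careful expansion of $D'_R\Cj{D'_R}$ using $g\mid D_2\Cj{Q}$ derived from the hypothesis $g\mid D\Cj{Q}$, and in Case~I it crucially relies on the specific nonzero choice $D_L=qQ_L-Q_Lq$ to guarantee $N\nmid\Cj{Q}_RD'_R$. This brings out a related misconception in your proposal: you attribute the role of that nonzero $D_L$ to keeping $M_L$ generic, but genericity of $M_L$ is automatic from $\mrpf(Q_L)=1$ (as you yourself observe); the choice of $D_L$ actually serves to prevent $N$ from dividing $\Cj{Q}_RD'_R$, which is what later guarantees $\lgcd(c,\Cj{Q}_cQ_R)=\Cj{Q}_c$. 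Finally, your claim that Algorithm~\ref{galg} can be "driven" by choosing quadratic factors to make $M_1$ have primal part $\Cj{Q}_c$ is wrong: because $\QNorm{M_r}$ is a power of the single irreducible $N$, Step~4 of Algorithm~\ref{galg} offers no choice, and the factorization of $M_r$ is in fact unique. The paper instead proves that the product of the first $\deg c/2$ linear factors necessarily has primal part $\Cj{Q}_c$, via the computation $\lgcd(c,\Cj{Q}_cQ_R)=\Cj{Q}_c\lgcd(Q_c,Q_R)=\Cj{Q}_c$ using Lemma~\ref{lem:g-a-b-lemma} and the established fact $N\nmid\Cj{Q}_RD'_R$.
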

\begin{proof}
  From Steps~1-3, if $\deg(Q)=0$, by the assumption, then $M= c+\eps D$ and  we have $g=1$ and $c | \QNorm{D}$. Applying Lemma~\ref{lem:g-a-b-lemma} to $A=\Cj{D}$, $B=D$, and $F=c$, we conclude that $D=M_1M_2$ with $M_1=\lgcd(c,D)$ and some $M_2\in\H[t]$ so that $c=M_1\Cj{M}_1$. Then $M=M_1\Cj{M}_1+\eps M_1M_2 =M_1(\Cj{M}_1+\eps M_2)$ is a product of two generic motion polynomials. Assume further $\deg(Q)\ne 0$.
  
  By the definition of $g$ and Lemma~\ref{lem:g-a-b-lemma}, we have $g=Q_L\Cj{Q_L}$ and $D=Q_LD_2$ for some $D_2\in\H[t]$.
Therefore, we have $g | \Cj{Q}_LD$ and $g | \Cj{Q}_LQ$, and  Step~11 outputs $D'_R$ and $Q_R$ which are polynomials. We have $M=(Q_L+\eps D_L)(cQ_R+\eps D'_R)$. Let us check that the second factor still satisfies the assumptions of the proposition. We make a case distinction for the divisibility of $\frac{1}{g}\Cj{Q}D$ by $N$.
  
  Case I. $N$ divides $\frac{1}{g}\Cj{Q}D$. Then we can always pick $q\in\H$ with $pq\neq qp$, where $t-p=\lgcd(N, Q)$, and  we have $t-p=\lgcd(N, D)$  by Lemma~\ref{lem:g-a-b-lemma}. In addition, we have $\deg g = \deg c$, i.e., $g = c$, and the following properties for $D'_R$:
 \begin{itemize}
\item We claim that $N$ does not
  divide $\Cj{Q}_R{D'_R} =(c\Cj{Q}_R\Cj{D}_LQ + \Cj{Q}_R\Cj{Q}_LD)/g$ with our choice of $D_L$.  Since $N$ divides $\Cj{Q}D/g=\Cj{Q}_R\Cj{Q}_LD/g$, the claim is that $N$ is not a factor of $\Cj{Q}_R\Cj{D}_LQ$. Otherwise, $N$ divides $\Cj{Q}_R\Cj{D}_LQ$, which implies that $N$ divides $\Cj{Q}_R\Cj{Q}_L\Cj{q}Q$ by the definition of $D_L$ and $\deg g>0$.
    Since $t-p=\lgcd(N,Q)$, it follows that $N$ divides $(t-\Cj{p})\Cj{q}Q$ by 
    Lemma~\ref{lem:a-b-c-lemma} applied to $A=\tfrac{\Cj{Q}(t-p)}{N}$, $B=t-\Cj{p}$, and $C=\Cj{q}Q$. In addition, $N$ divides $(t-\Cj{p})\Cj{q}(t-p)$, again by 
    Lemma~\ref{lem:a-b-c-lemma} applied to $A=(t-\Cj{p})$, $B=\Cj{q}(t-p)$, and $C=(t-\Cj{p})Q/N$, which contradicts the choice of $q$.
\item We claim that $c | D'_R\Cj{D'_R}$. Since $c=g$, we have $c | Q\Cj{Q} | \Cj{D}_LQ\Cj{Q}D_L$ and $c| Q_L \Cj{Q}_L\frac{D\Cj{D}}{g^2}$. 
Since $D=Q_LD_2$ and $g^2 | \Cj{D}D$, it follows that $g | D_2\Cj{D}_2$. 
Since $M$ is reduced, it follows that $\mrpf(g,D_2)=\mrpf(g,D)=1$, and $g=\QNorm{\rgcd(D_2,g)}=\QNorm{\rgcd(D,g)}$  by Lemma~\ref{lem:g-a-b-lemma}, hence $\rgcd(D_2,g)=\rgcd(D,g)$. Since $g | D\Cj{Q}$, it follows that $g=\rgcd(g,D)\lgcd(g,\Cj{Q})=\rgcd(g,D_2)\lgcd(g,\Cj{Q})$  by Lemma~\ref{lem:g-a-b-lemma}, thus $g | D_2\Cj{Q}$.
Therefore, $g^2$ divides $Q\Cj{D}Q_L$ and $\Cj{Q}_L{D}\Cj{Q}$. Therefore, $c$ divides $D'_R\Cj{D'_R}$ because we have
    $D'_R\Cj{D'_R}=\Cj{D}_LQ\Cj{Q}D_L + Q_L \Cj{Q}_L\frac{D\Cj{D}}{g^2} +\frac{c}{g^2} \left(\Cj{D}_LQ\Cj{D}Q_L + \Cj{Q}_L{D}\Cj{Q}D_L \right)$.
 \end{itemize} 
 
 Case II. $N$ does not divide $\frac{1}{g}\Cj{Q}D$. The real polynomial $c$ divides $D'_R\Cj{D'_R}$ again because $cg$ divides $D\Cj{D}$, and $N$ does not
  divide $\Cj{Q}_RD'_R=\frac{1}{g}\Cj{Q}D$ by the assumption.

 Therefore, in both cases,
 $c$ divides $D'_R\Cj{D'_R}$, hence by Lemma~\ref{lem:g-a-b-lemma} we have $c= Q_c\Cj{Q}_c$, and $c$ divides $\Cj{Q}_cD'_R$. 
 Therefore, the outputs of Step~11 are right we need, and 
 Step~12 outputs the polynomial $M_r=\Cj{Q}_cQ_R + \eps \frac{\Cj{Q}_cD'_R}{c}$ which is a generic motion polynomial. 

Therefore,  
Step~13 gives a factorization of $M_r$, hence its primal part $\Cj{Q}_cQ_R$, into monic linear factors. Such a factorization is unique because $\nu (M_r)$ is a power of $N$, hence the linear factors are reconstructed one by one by division by $N$ with a remainder. Hence by Lemma~\ref{lem:g-a-b-lemma} applied to $F=c$, the primal part of $L_1L_2\cdots L_{{\deg c}/{2}}$ equals $\lgcd(c,\Cj{Q}_cQ_R)=\Cj{Q}_c\lgcd({Q}_c,Q_R)=\Cj{Q}_c\lgcd(c,D'_R,Q_R)=\Cj{Q}_c$ because $c$ divides $N^n$ and $N$ does not divide $\Cj{Q}_RD'_R$. Hence the primal part of $M_C$ is $c$. Step~14 gives two generic motion polynomials $M_L$ and $M_R$. By a straightforward calculation, $M=(Q_L+\eps D_L)(cQ_R+\eps D'_R)=M_LQ_cM_r=M_LM_CM_R$.

\end{proof}
}

{
In summary, we gave a constructive procedure in Algorithm~\ref{alg:singlepattern} to find the factorization of the
motion polynomial $M=cQ+\eps D$ when $cg$ divides $\nu(D)$ and $\QNorm{M}=N^{n}$ for a real, irreducible, quadratic
    polynomial $N$. 
}

\begin{prop}\label{prop:reverse-singlepattern}
  Consider a bounded, monic, reduced motion polynomial $M = cQ + \eps D \in
  \D\H[t]$ of degree $n \in \N$ with $\mrpf(Q) = 1$ and $\QNorm{M} =c^2\QNorm{Q}
  = N^n$ for some irreducible, quadratic polynomial $N \in \R[t]$. Set $g
  \coloneqq \mygcd(c,\Cj{Q}D,D\Cj{Q}) \in \R[t]$. If $cg$ divides $\QNorm{D}$,
  then $M$ admits a factorization into monic linear factors.
\end{prop}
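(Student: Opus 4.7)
The plan is to obtain the factorization constructively via Proposition~\ref{prop:alg_singlepattern}. Given the hypothesis $cg \mid \QNorm{D}$, that proposition decomposes $M$ as $M_L M_C M_R$, where $M_L$ and $M_R$ are generic monic motion polynomials and $M_C$ itself splits as a product of two generic monic motion polynomials. Each of these four generic pieces then admits a factorization into monic linear motion polynomials by Theorem~\ref{thm:alg_galg}, and concatenating the four resulting lists gives the desired factorization of $M$.

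The one subtlety is the asymmetric hypothesis of Proposition~\ref{prop:alg_singlepattern}, which assumes $g_L := \mygcd(c, \Cj{Q}D)$ divides $g_R := \mygcd(c, D\Cj{Q})$ and uses $g = g_L$, whereas the current statement uses the symmetric $g = \mygcd(g_L, g_R)$. To align the hypotheses I would pass from $M$ to $\Cj{M}$ if necessary, noting that $M = \prod_{i=1}^n (t - h_i)$ is a factorization with monic linear motion polynomial factors if and only if $\Cj{M} = \prod_{i=1}^n (t - \Cj{h}_{n+1-i})$ is. Under $M \mapsto \Cj{M}$ the data $(c, Q, D)$ is replaced by $(c, \Cj{Q}, \Cj{D})$; this swaps $g_L$ and $g_R$ (a real polynomial divides a quaternion polynomial iff it divides its conjugate) while leaving $c$, $\QNorm{D}$, $g$, monicity, boundedness, reducedness, and $\mrpf(\Cj{Q}) = 1$ unchanged. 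Since $g_L$ and $g_R$ both divide $c$ and $c$ is a power of the irreducible $N$, one of $g_L, g_R$ divides the other; hence after possibly replacing $M$ by $\Cj{M}$ we may assume $g_L \mid g_R$, which makes $g = g_L$ and puts us squarely in the setting of Proposition~\ref{prop:alg_singlepattern}.

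The only remaining concern is the edge case $\deg c = 0$, which lies outside the scope $\deg(c) > 0$ of Proposition~\ref{prop:alg_singlepattern}. Here $c = 1$, so $\mrpf(P) = \mrpf(Q) = 1$, $M$ itself is generic, and Theorem~\ref{thm:alg_galg} directly supplies a factorization into linear motion polynomials. I expect this bookkeeping — the conjugation reduction and the trivial edge case — to be the only real obstacle; once in place, the proof is a mechanical assembly of Proposition~\ref{prop:alg_singlepattern} followed by four applications of \texttt{GFactor}.
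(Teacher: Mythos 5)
Your proposal is correct and follows essentially the same route as the paper: reduce to the case $g_L \mid g_R$ by replacing $M$ with $\Cj{M}$ if necessary, apply Proposition~\ref{prop:alg_singlepattern} to get $M = M_L M_C M_R$, and then factor the generic pieces via Theorem~\ref{thm:alg_galg}. Your explicit justification that one of $g_L$, $g_R$ divides the other (both being powers of $N$) and your separate treatment of the $\deg c = 0$ case are details the paper leaves implicit, but they do not change the argument.
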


\begin{proof}
  We may assume that $\mygcd(c,\Cj{Q}D)$ divides $\mygcd(c,D\Cj{Q})$; otherwise, replace $M$ with $\Cj{M}$. Apply Proposition~\ref{prop:alg_singlepattern}
to obtain $M_L$, $M_C$, and $M_R$ such that $M = M_LM_CM_R$. The polynomials $M_L$ and $M_R$ are generic and can be decomposed into monic linear factors by Algorithm~\ref{galg}. The polynomial $M_C$ is translational but, as product of two generic motion polynomials, also admits a factorization with monic linear motion polynomial factors.
\end{proof}

\subsection{Factorizations of General Bounded Motion Polynomials}
\label{sec:general}

The final step is to extend the result of Proposition~\ref{prop:singlepattern} to
motion polynomials with arbitrary norms. We combine
Proposition~\ref{prop:alg_mgfactor}, Lemma~\ref{lem:factorizationpattern}, and a
technique called ``Bennett flips'' introduced in \cite{li17}. It is based on
replacing a factorization $(t-h_1)(t-h_2)$ of a generic 
monic motion
polynomial of degree two with $\QNorm{t-h_1}\ne \QNorm{t-h_2}$ by its second factorization $(t-k_1)(t-k_2)$ \cite[Theorem 1]{hegedus13}. Note that
$\QNorm{t-h_1} = \QNorm{t-k_2}$ and $\QNorm{t-h_2} = \QNorm{t-k_1}$. First, we
generalize the latter procedure.

\begin{lem}\label{lem:bennett-flips}
  Consider a bounded monic reduced motion polynomial $M = c + \eps D \in
  \D\H[t]$ of degree $n$ with $c \in \R[t]$ and a linear bounded monic
  reduced motion polynomial $t - p - \eps d \in \D\H[t]$  with $p,d\in\H$. Let $\QNorm{M}
  = c^2 = N_1^n$ and $\nu(t - p - \eps d) = N_2$ for some irreducible, quadratic
  polynomials $N_1$, $N_2 \in \R[t]$ with $\mygcd(N_1,N_2) = 1$. If $M$ admits a
  factorization into monic linear motion polynomial factors, then there exists a quaternion polynomial $D_1 \in \H[t]$ and
  a quaternion $d_1 \in \H$ such that
$M (t - p - \eps d) = (t - p - \eps d_1)(c + \eps D_1)$, and $c + \eps D_1$ admits a factorization into monic linear motion polynomial factors.
\end{lem}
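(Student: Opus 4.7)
The plan is to start from the given factorization of $M$ and, by iterated \emph{Bennett flips}, move an $N_2$-factor from the rightmost to the leftmost position of the product $M(t-p-\eps d)$. Since $\nu(M) = c^2 = N_1^n$ with $N_1$ irreducible quadratic, any linear factorization of $M$ forces every factor to have norm $N_1$; write $M = (t-h_1)(t-h_2)\cdots(t-h_n)$ accordingly. Appending the factor $t - p - \eps d$ on the right gives an initial linear factorization of $M(t-p-\eps d)$ with the lone $N_2$-factor at the very right.

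I would then flip the rightmost pair $(t-h_n)(t-p-\eps d)$ via the second factorization of a generic monic quadratic motion polynomial granted by Theorem~1 of \cite{hegedus13}. This flip is legal at every stage because the primal part of the quadratic block in question has norm $N_1 N_2$, which is squarefree thanks to $\mygcd(N_1, N_2) = 1$; hence it admits no nontrivial real polynomial factor and the block is generic. Iterating $n$ such flips arrives at
\begin{equation*}
  M(t-p-\eps d) = (t-q-\eps d_1)(t-\tilde h_1)(t-\tilde h_2)\cdots(t-\tilde h_n)
\end{equation*}
for some $q, d_1 \in \H$ with $\nu(t-q-\eps d_1) = N_2$ and each $\nu(t-\tilde h_i) = N_1$.

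The main obstacle will be to identify $q$ with the prescribed quaternion $p$, because individual Bennett flips do not preserve the primal part of the moving $N_2$-factor. I plan to settle this by proving uniqueness of the monic left divisor of norm $N_2$ of the primal polynomial $c(t-p)$. A left-evaluation calculation shows that $(t-q)$ is a left divisor of $c(t-p)$ in $\H[t]$ if and only if $q\,c(q) = c(q)\,p$. Since $\nu(t-q) = \nu(t-p) = N_2$, the quaternion $q$ shares the minimal polynomial $N_2$ with $p$ and is therefore conjugate to $p$ in $\H$, so $q = u p u^{-1}$ for some nonzero $u \in \H$. Then $c(q) = u\,c(p)\,u^{-1}$, and because $c(p) = N_1(p)^{n/2}$ is nonzero (as $p$ is a root of $N_2$ but not of the coprime polynomial $N_1$) and lies in the commutative subring $\R[p] \subset \H$ (so it is invertible and commutes with $p$), the equation $q\,c(q) = c(q)\,p$ collapses to $u p = p u$, forcing $q = p$.

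With $q = p$ in hand, set $c + \eps D_1 := (t - \tilde h_1)(t - \tilde h_2) \cdots (t - \tilde h_n)$. The primal identity $c(t-p) = (t-p)\cdot(\text{primal of } c + \eps D_1) = (t-p)\,c$, combined with left-cancellation of the monic polynomial $t-p$ in $\H[t]$, shows that the primal part of $c + \eps D_1$ equals $c$. Hence $M(t-p-\eps d) = (t-p-\eps d_1)(c + \eps D_1)$, and $c + \eps D_1$, being a product of linear motion polynomials by construction, admits the required factorization.
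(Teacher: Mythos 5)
Your proposal is correct and follows the same strategy as the paper: iterated Bennett flips that move the lone $N_2$-factor from the rightmost to the leftmost position of the product $M(t-p-\eps d)$. The only departure is in the final identification of the moved factor's primal part with $t-p$, which the paper obtains by a quick appeal to the ABC-lemma (Lemma~\ref{lem:a-b-c-lemma} with $A = t-\Cj{p}_1$, $B = t-p$, $C = c$, $N = N_2$), while you instead give a self-contained and equally valid uniqueness argument — via the left-evaluation criterion $q\,c(q)=c(q)\,p$ together with conjugacy in $\H$ and the invertibility of $c(p)\in\R[p]$ — for the monic left divisor of $c(t-p)$ of norm $N_2$.
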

\begin{proof}
  Writing a factorization of $M$ as $M = (t - h_1)(t - h_2)\cdots(t - h_n)$ and using one Bennett flip, we have
$ M(t - p - \eps d) = (t - h_1)(t - h_2) \cdots (t - h_{n-1})(t - p_n - \eps d_n)(t - \ell_n)$,
where $(t - h_n)(t - p - \eps d) = (t - p_n - \eps d_n)(t - \ell_n)$, $\nu(t - h_n) = \nu(t - \ell_n) = N_1$, and $\nu(t - p_n - \eps d_n) = \nu(t - p - \eps d) = N_2$. Applying this procedure $n$ times, we arrive at
$ M(t - p - \eps d) = (t - p_1 - \eps d_1)(t - \ell_1)(t - \ell_2) \cdots (t - \ell_n)$
with $\nu(t - p_1 - \eps d_1) = N_2$ and $\nu((t - \ell_1)(t - \ell_2) \cdots (t - \ell_n)) = N_1^n$. Comparing the primal parts and applying Lemma~\ref{lem:a-b-c-lemma} to $A=t-\Cj{p_1}$, $B=t-p$, $C=c$, and $N=N_{2}$, we conclude $p_1 = p$ and $(t - \ell_1)(t - \ell_2) \cdots (t - \ell_n) = c + \eps D_1$ for some $D_1 \in \H[t]$.
\end{proof}

\begin{lem}\label{lem:m_r_admit_a_factorization}
    Let $M = \prod_{i=1}^r M_i$ be a factorization of a bounded monic reduced motion polynomial $M$ given by Proposition~\ref{prop:alg_mgfactor}. If $M$ admits a factorization into monic linear motion polynomial factors, then each $M_i$ does. \end{lem}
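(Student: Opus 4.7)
The plan is to use Bennett flips to rearrange a linear factorization of $M$ so that its factors cluster into blocks matching the norms $N_i^{n_i}$, and then to identify each such block with the corresponding $M_i$ via a uniqueness result for coprime-norm decompositions.

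Concretely, I would start from a given linear factorization $M = L_1 L_2 \cdots L_n$. Since $M$ is bounded, each $L_j$ has a non-real primal part, so $\QNorm{L_j}$ is one of the irreducible quadratics $N_1, \ldots, N_r$. Whenever two adjacent factors $L_j L_{j+1}$ have distinct norms, their product is a generic quadratic motion polynomial (else the primal roots would be conjugate and the two norms would coincide), so Algorithm~\ref{galg} applied to $L_j L_{j+1}$ with the other quadratic factor of its norm yields a second factorization in which the two norms are swapped. A bubble-sort argument thus produces a linear factorization $M = B_1 B_2 \cdots B_r$ where $B_i$ is a product of $n_i$ linear factors of norm $N_i$; in particular, $\QNorm{B_i} = N_i^{n_i}$ and each $B_i$ admits a linear factorization by construction.

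The second ingredient is a uniqueness claim: if a reduced monic motion polynomial $M$ admits two factorizations $M = AB = \tilde A \tilde B$ into monic motion polynomials with $\QNorm{A} = \QNorm{\tilde A} = F_1$, $\QNorm{B} = \QNorm{\tilde B} = F_2$, and $\mrpf(F_1, F_2) = 1$ in $\R[t]$, then $A = \tilde A$ and $B = \tilde B$. For the primal parts this reduces to the standard uniqueness of coprime-norm factorizations in $\H[t]$: from $P_A P_B = P_{\tilde A} P_{\tilde B}$ one deduces $F_1 P_B = \Cj{P_A} P_{\tilde A} P_{\tilde B}$, so $P_{\tilde B}$ right-divides $F_1 P_B$, and coprimality forces $P_{\tilde B}$ to right-divide $P_B$ itself. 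Matching degrees yields $P_{\tilde B} = P_B$ and then $P_{\tilde A} = P_A$. For the dual parts, set $X \coloneqq D_B - D_{\tilde B}$ and $Y \coloneqq D_{\tilde A} - D_A$; comparing dual parts gives $P_A X = Y P_B$, and right-multiplying by $\Cj X \Cj{P_A}$ (which equals $\Cj{P_B} \Cj Y$ by conjugating the identity) produces the clean real polynomial equation $\QNorm{X} F_1 = F_2 \QNorm{Y}$. Coprimality of $F_1, F_2$ together with the degree bound $\deg \QNorm{Y} \leq 2(\deg A - 1) < \deg F_1$ forces $Y = 0$, and the symmetric argument yields $X = 0$.

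Applying the uniqueness to $M = (M_1 \cdots M_{r-1}) M_r = (B_1 \cdots B_{r-1}) B_r$ with $F_1 = \prod_{i < r} N_i^{n_i}$ and $F_2 = N_r^{n_r}$ identifies $B_r$ with $M_r$ and $B_1 \cdots B_{r-1}$ with $M_1 \cdots M_{r-1}$. Hence $M_r$ inherits the linear factorization of $B_r$, and $M' \coloneqq M_1 \cdots M_{r-1}$ inherits that of $B_1 \cdots B_{r-1}$; since $M_1 \cdots M_{r-1}$ is precisely the decomposition of $M'$ produced by Proposition~\ref{prop:alg_mgfactor}, induction on $r$ finishes the proof. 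I expect the main obstacle to be the uniqueness of the dual parts: once the primal-part uniqueness is in hand, the degree-counting trick applied to $\QNorm{X} F_1 = F_2 \QNorm{Y}$ is the decisive step, while the Bennett-flip sorting and the induction on $r$ are routine.
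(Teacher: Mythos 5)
Your proposal is correct and follows the same overall strategy as the paper: use Bennett flips to regroup a linear factorization of $M$ into blocks $B_1\cdots B_r$ with $\QNorm{B_i}=N_i^{n_i}$, identify the last block with $M_r$, and induct on $r$. The only genuine difference is in the identification step. The paper shows directly that $N_r^{n_r}$ divides $M_r\Cj{\tilde M_r}$ (where $\tilde M_r$ is the last block) by applying Lemma~\ref{lem:g-a-b-lemma} separately to the primal and dual parts of $M\Cj{\tilde M_r}$, and concludes $M_r=\tilde M_r$ by comparing degrees. You instead prove a self-contained uniqueness statement for two-factor decompositions with coprime real norms: after settling the primal parts by the standard coprimality argument ($P_{\tilde B}$ right-divides both $F_1P_B$ and $F_2P_B$, hence $P_B$), you pass to $P_AX=YP_B$ for the dual-part differences and derive $\QNorm{X}F_1=F_2\QNorm{Y}$, which kills $Y$ (and then $X$) by degree count and positive-definiteness of the quaternion norm. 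Both identification arguments are short and correct; yours has the mild advantage of isolating a reusable uniqueness lemma (from which the paper's claim $M_r=\tilde M_r$ follows immediately), while the paper's stays entirely within its existing AB-lemma toolkit. Your observation that adjacent linear factors with distinct norms form a generic (hence flippable) quadratic is exactly the justification the paper implicitly relies on for its Bennett flips, so that part is fine as well.
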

\begin{proof}
  We prove the statement by induction on $r$. 
  The base $r = 1$ is trivial. Take $r>1$ and consider any factorization $M= \prod_{i=1}^r M_i$ of $M$ given by
  Proposition~\ref{prop:alg_mgfactor} with $\QNorm{M_1}= N_1^{n_1}$, \dots, $\QNorm{M_r}= N_r^{n_r}$ for some pairwise coprime real irreducible quadratic polynomials $N_1$,     $N_2$, \ldots, $N_r$ and positive integers $n_1$, $n_2$, \ldots, $n_r$. Using Bennett flips, we get $M= \prod_{i=1}^r \prod_{j=1}^{n_i} L_{ij}$, where $L_{ij}$ are monic linear motion polynomial factors with $\nu(L_{ij})=N_i$. Let $\tilde{M}_r=\prod_{j=1}^{n_r} L_{rj}$. Then we have $M\Cj{\tilde{M}_r}=N_r^{n_r}\prod_{i=1}^{r-1} \prod_{j=1}^{n_i} L_{ij}=:N_r^{n_r}\tilde{M}'$.   Since $\nu(\tilde{M}')=\prod_{i=1}^{r-1} N_i^{n_i}$, by applying the induction hypothesis to $\tilde{M}'$,
we only need to prove that $\tilde{M}_r=M_r$.  It is clear that $ N_r^{n_r}$ divides $M\Cj{\tilde{M}_r}$. First, $ N_r^{n_r}$ divides the primal part of $M\Cj{\tilde{M}_r}=(P_1+\eps D_1)(P_2+\eps D_2)=P_1P_2+\eps(P_1D_2+D_1P_2)$ with $\prod_{i=1}^{r-1} M_{i}=P_1+\eps D_1$ and $M_r\Cj{\tilde{M}_r}=P_2+\eps D_2$. This implies that $ N_r^{n_r}$ divides $P_1P_2$, then $ N_r^{n_r}$ divides $P_2$ by Lemma~\ref{lem:g-a-b-lemma}. Second, $ N_r^{n_r}$ divides the dual part $P_1D_2+D_1P_2$.  Then $ N_r^{n_r}$ also divides $P_1D_2$. Then $ N_r^{n_r}$ divides $D_2$ by Lemma~\ref{lem:g-a-b-lemma}. Then $ N_r^{n_r}$ divides  $M_r\Cj{\tilde{M}_r}$, and they must be equal. Then $M_r=\tilde{M}_r$, which finishes the proof.
\end{proof}

\begin{lem}\label{lem:multiplicativity-gcd}
  Consider a bounded, monic, reduced motion polynomial $M = c Q + \eps D \in \DH[t]$ with $c \in \R[t]$ and $\mrpf(Q) = 1$. Let $M = \prod_{i=1}^r (c_iQ_i + \eps D_i)$ be a factorization of $M$ given by Proposition~\ref{prop:alg_mgfactor} with $c_i \in \R[t]$ and $\mrpf(Q_i) = 1$. 
  For $i = 1,\ldots ,r$, set 
  \begin{align*}
  g_{iL}&\coloneqq\mygcd(c_i,\Cj{Q}_iD_i), 
  &  g_{iR}&\coloneqq\mygcd(c_i,D_i\Cj{Q}_i), 
  &
  g_i&\coloneqq \mygcd(c_i,D_i\Cj{Q}_i,\Cj{Q}_iD_i),\\ 
g_L &\coloneqq \mygcd(c,\Cj{Q}D), 
  &
  g_R &\coloneqq \mygcd(c,D\Cj{Q}), 
  & 
  g
  &\coloneqq \mygcd(c,D\Cj{Q},\Cj{Q}D).
  \end{align*}
  Then $g_L = \prod_{i=1}^r g_{iL}$, $g_R = \prod_{i=1}^r g_{iR}$,  and $g = \prod_{i=1}^r g_{i}$.

\end{lem}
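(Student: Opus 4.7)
The plan is to reduce the three identities to a one-factor-at-a-time computation via the Chinese Remainder Theorem for real polynomials. Since the norms $\nu(M_i)=N_i^{n_i}$ are pairwise coprime and $c_i\mid N_i^{n_i}$, the polynomials $c_1,\dots,c_r$ are pairwise coprime in $\R[t]$, and comparing primal parts of $M=\prod M_i$ gives $c=c_1\cdots c_r$ and $Q=Q_1\cdots Q_r$. Because the $c_i$ are pairwise coprime, $\mygcd(c,X)=\prod_i\mygcd(c_i,X)$ for any $X$, so it suffices to prove $\mygcd(c_i,\Cj{Q}D)=g_{iL}$, $\mygcd(c_i,D\Cj{Q})=g_{iR}$, and $\mygcd(c_i,\Cj{Q}D,D\Cj{Q})=g_i$ for each fixed $i$.

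Expanding the product yields $D=\sum_{j=1}^{r}P_1\cdots P_{j-1}\,D_j\,P_{j+1}\cdots P_r$, where $P_k:=c_kQ_k$. The decisive observation is $\Cj{Q}_kP_k=c_k\nu(Q_k)\in\R[t]$; writing $\Cj{Q}=\Cj{Q}_r\cdots\Cj{Q}_1$ and collapsing the pairs $\Cj{Q}_kP_k$ for $k<j$ rewrites the $j$-th summand of $\Cj{Q}D$ as $R_{j-1}\,\Cj{Q}_r\cdots\Cj{Q}_j\,D_j\,P_{j+1}\cdots P_r$, with $R_{j-1}:=\prod_{k<j}c_k\nu(Q_k)$. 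If $j>i$, then $c_i\mid R_{j-1}$; if $j<i$, then $P_i=c_iQ_i$ appears inside $P_{j+1}\cdots P_r$. Either way the term is divisible by $c_i$, so only the $j=i$ summand survives modulo $c_i$.

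To finish, I invoke the elementary stripping lemma: for $F\in\R[t]$ and $A\in\H[t]$ with $\mygcd(F,\nu(A))=1$, one has $\mygcd(F,AX)=\mygcd(F,XA)=\mygcd(F,X)$ for every $X\in\H[t]$, because $F\mid AX$ forces $F\mid\Cj{A}AX=\nu(A)X$ and coprimality then forces $F\mid X$. Iterated application peels off $R_{i-1}$, the factors $\Cj{Q}_k$ with $k>i$, and the factors $P_k$ with $k>i$ from the $j=i$ term (their norms divide the $N_k^{n_k}$ and are thus coprime to $c_i\mid N_i^{n_i}$), leaving $\mygcd(c_i,\Cj{Q}D)=\mygcd(c_i,\Cj{Q}_iD_i)=g_{iL}$. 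The identity $\mygcd(c_i,D\Cj{Q})=g_{iR}$ is entirely symmetric. For the three-way gcd, using $g_{iR}\mid c_i$ one computes $\mygcd(c_i,\Cj{Q}D,D\Cj{Q})=\mygcd(g_{iR},\Cj{Q}D)=\mygcd(g_{iR},\Cj{Q}_iD_i)=\mygcd(g_{iR},g_{iL})=g_i$. Multiplying over $i$ via CRT delivers the three stated equalities.

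The main obstacle is the combinatorial bookkeeping ensuring that every $j\neq i$ summand in the expansion of $\Cj{Q}D$ (and $D\Cj{Q}$) is divisible by $c_i$; once this is spotted, the rest is a routine repeated application of the stripping lemma and does not require anything beyond the coprimality already guaranteed by Proposition~\ref{prop:alg_mgfactor}.
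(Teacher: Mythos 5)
Your proof is correct and rests on the same mechanism as the paper's: the pairwise coprimality of the $c_i$ (equivalently, of the norms $N_i^{n_i}$) localizes each gcd computation at a single index, and real or quaternionic cofactors of coprime norm can be stripped off a $\mygcd$ with $c_i$. The paper packages this as an induction on $r$ that peels off the last factor $c_rQ_r+\eps D_r$, while you run the argument directly for each $i$ via the full $r$-term expansion of $D$, the identity $\mygcd(c,X)=\prod_i\mygcd(c_i,X)$, and an explicit stripping lemma --- an organizational rather than conceptual difference.
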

\begin{proof}
  We prove the statement by induction on $r$. The base $r = 1$ is trivial. Take $r>1$ and consider the factorization $M
  = cQ + \eps D = \prod_{i=1}^{{r}} (c_iQ_i + \eps D_i)$ of $M$ given by
  Proposition~\ref{prop:alg_mgfactor}. We have
$Q = \prod_{i=1}^{{r}} Q_{i}, \quad D = \sum_{i=1}^{{r}} \left[ \left( \prod_{j=1}^{i-1} c_jQ_j \right) D_i \left( \prod_{j=i+1}^{{r}} c_jQ_j \right) \right]$,
and therefore
  \begin{equation*}
    \begin{aligned}
      D\Cj{Q} &= \sum_{i=1}^{{r-1}} \left[ \left( \prod_{j=1}^{i-1} c_jQ_j \right) D_i \left( \prod_{j=i+1}^{{r}} c_jQ_j \right) \right] \left( \prod_{i=1}^{{r}} \Cj{Q}_{r+{1}-i} \right) + \left( \prod_{j=1}^{r-1} c_jQ_j \right) D_{{r}} \left( \prod_{i={1}}^{r} \Cj{Q}_{{r+1-i}} \right) \\
              &= \sum_{i=1}^{{r-1}} \left[ \left( \prod_{j=1}^{i-1} c_jQ_j \right) D_i \left( \prod_{j=i+1}^{{r-1}} c_jQ_j \right) \right] \left( \prod_{i=1}^{{r-1}} \Cj{Q}_{{r-i}} \right) c_{{r}}\nu(Q_{{r}}) \\
              &\qquad + \left( \prod_{j=1}^{r-1} c_j \right) \left( \prod_{j=1}^{r-1} Q_j \right) D_{{r}}\Cj{Q}_{{r}} \left( \prod_{i=1}^{{r-1}} \Cj{Q}_{{r-i}} \right) \\
              &= \tilde{D} \Cj{\tilde{Q}} c_{{r}}\nu(Q_{{r}}) + \tilde{c} \tilde{Q} D_{{r}}\Cj{Q}_{{r}} \Cj{\tilde{Q}},
    \end{aligned}
  \end{equation*}
  where $\tilde{c} \tilde{Q} + \eps \tilde{D} \coloneqq \prod_{i=1}^{r-1} (c_iQ_i +
  \eps D_i)$. Basic $\mygcd$-properties and the definitions of $c_{i}$ and $Q_i$
  yield
  \begin{align*}
    \mygcd(\tilde{c}, \tilde{D} \Cj{\tilde{Q}}) & \overset{\mygcd(\tilde{c},c_{{r}}\nu(Q_{{r}}))=1}{=} \mygcd(\tilde{c}, \tilde{D}\Cj{\tilde{Q}} c_{{r}} \nu(Q_{{r}})) \\
    &= \mygcd(\tilde{c}, \tilde{D}\Cj{\tilde{Q}} c_{{r}} \nu(Q_{{r}}) + \tilde{c} \tilde{Q} D_{{r}}\Cj{Q}_{{r}} \Cj{\tilde{Q}}) = \mygcd(\tilde{c},D\Cj{Q}), \\
    \mygcd(c_{{r}}, D_{{r}}\Cj{Q}_{{r}}) & \overset{\mygcd(c_{{r}},{\tilde{c}\nu(\tilde{Q})})=1}{=} \mygcd(c_{{r}}, \tilde{c}\tilde{Q}D_{{r}}\Cj{Q}_{{r}}\Cj{\tilde{Q}}) \\
     & = \mygcd(c_{{r}}, \tilde{D}\Cj{\tilde{Q}} c_{{r}} \nu(Q_{{r}}) + \tilde{c} \tilde{Q} D_{{r}}\Cj{Q}_{{r}} \Cj{\tilde{Q}}) = \mygcd(c_{{r}}, D\Cj{Q}).
  \end{align*}
  We apply the induction hypothesis to $\tilde{c}\tilde{Q} + \eps \tilde{D}$,
  i.e. $\mygcd(\tilde{c}, \tilde{D}\Cj{\tilde{Q}}) = \prod_{i=1}^{r-1} g_{iR}$, and
  obtain
  \begin{align*}
    \prod_{i=1}^{{r}} g_{iR} &= g_{{r}R} \prod_{i=1}^{r-1} g_{iR} = \mygcd(c_{{r}}, D_{{r}}\Cj{Q}_{{r}}) \mygcd(\tilde{c}, \tilde{D}\Cj{\tilde{Q}}) \\
                             &= \mygcd(c_{{r}}, D\Cj{Q}) \mygcd(\tilde{c}, D\Cj{Q}) \overset{\mygcd(c_{{r}},\tilde{c})=1}{=} \mygcd(c,D\Cj{Q}) = g_R.
  \end{align*}
  The statement on $g_L$ is proved analogously.
  
  Finally, since $\mygcd(g_{iL}, g_{jR}) = 1$ for $i \neq j$, it follows that
  \begin{equation*}
    g = \mygcd(g_L,g_R) = \mygcd\left( \prod_{i=1}^r g_{iL}, \prod_{i=1}^r g_{iR} \right) = \prod_{i=1}^r \mygcd(g_{iL}, g_{iR}) = \prod_{i=1}^r g_i.
  \end{equation*}
\end{proof}

\begin{prop}\label{prop:final-characterization}
  Consider a bounded, monic, reduced motion polynomial $M = c Q + \eps D \in
  \DH[t]$ with $c \in \R[t]$ and $\mrpf(Q) = 1$. Define $g \coloneqq
  \mygcd(c,D\Cj{Q},\Cj{Q}D)$. If $M$ admits a factorization into monic linear factors,
  then $c g$ divides $\QNorm{D}$.
\end{prop}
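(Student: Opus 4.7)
The plan is to reduce the general case to the primary-norm case handled in Section~\ref{sec:non-generic}, by induction on the number $r$ of primary factors in the decomposition provided by Proposition~\ref{prop:alg_mgfactor}. Write $M = M_1\cdots M_r$ with $M_i = c_iQ_i+\eps D_i$ of primary norm $N_i^{n_i}$; since $M$ admits a factorization into monic linear factors, Lemma~\ref{lem:m_r_admit_a_factorization} shows that each $M_i$ does as well. Proposition~\ref{prop:singlepattern} then yields, for every $i$, that either $c_ig_{iL}$ or $c_ig_{iR}$ divides $\QNorm{D_i}$, hence $c_ig_i\mid\QNorm{D_i}$ with $g_i:=\mygcd(g_{iL},g_{iR})$. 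Combined with Lemma~\ref{lem:multiplicativity-gcd}, which gives $cg = \prod_i c_ig_i$, it remains to prove $\prod_i c_ig_i\mid \QNorm{D}$.

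For the inductive step I would split $M = M'M_r$ with $M' = c'Q' + \eps D' = \prod_{i<r} M_i$, which still satisfies the hypotheses of the proposition. Expanding $D = c'Q'D_r + c_rD'Q_r$ and collapsing the diagonal pieces (norms are real) yields
\begin{equation*}
\QNorm{D} = c'^2\QNorm{Q'}\QNorm{D_r} + c'c_r X + c_r^2\QNorm{Q_r}\QNorm{D'},
\quad X := Q'D_r\Cj{Q}_r\Cj{D}' + D'Q_r\Cj{D}_r\Cj{Q}'.
\end{equation*}
The first and last summands are divisible by $cg = c'g'\,c_rg_r$ because of $g'\mid c'$, $g_r\mid c_r$, the inductive hypothesis $c'g'\mid\QNorm{D'}$, the base case $c_rg_r\mid\QNorm{D_r}$, and the coprimality of $c'g'$ with $c_rg_r$ (they are supported on disjoint irreducibles).

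The main obstacle is the cross term: proving $g'g_r\mid X$. The crucial observation is that $X = Y + \Cj{Y}$ with $Y = Q'D_r\Cj{Q}_r\Cj{D}'$, so $X$ is actually a real polynomial. Divisibility by $g_r$ is then immediate from $g_r\mid D_r\Cj{Q}_r$, which forces $g_r\mid Y$ and hence $g_r\mid X$. For $g'\mid X$ I would sandwich by $Q'$: using the Study relations $\Cj{D}'Q' = -\Cj{Q}'D'$ and $D_r\Cj{Q}_r = -Q_r\Cj{D}_r$, one simplifies
\begin{equation*}
\Cj{Q}'XQ' = -\QNorm{Q'}\bigl(D_r\Cj{Q}_r\cdot\Cj{Q}'D' + \Cj{Q}'D'\cdot D_r\Cj{Q}_r\bigr).
\end{equation*}
Substituting $\Cj{Q}'D' = g'H$, legal since $g'\mid\Cj{Q}'D'$ by definition, exhibits $g'\QNorm{Q'}$ as a factor of $\Cj{Q}'XQ'$. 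But $X\in\R[t]$ commutes with $Q'$, so $\Cj{Q}'XQ' = \QNorm{Q'}X$; cancelling the nonzero $\QNorm{Q'}$ in the integral domain $\H[t]$ gives $g'\mid X$. Since $g'$ and $g_r$ are coprime real polynomials, $g'g_r\mid X$, and all three summands of $\QNorm{D}$ are divisible by $cg$, closing the induction.
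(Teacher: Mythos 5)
Your proof is correct, and the way you assemble the general case from the primary-norm case is genuinely different from the paper's. Both arguments begin identically: primary decomposition via Proposition~\ref{prop:alg_mgfactor}, Lemma~\ref{lem:m_r_admit_a_factorization} to push the linear factorization down to each primary factor, and Lemma~\ref{lem:multiplicativity-gcd} to identify $cg$ with $\prod_i c_ig_i$. The paper then re-enters the structural machinery: it applies Lemma~\ref{lem:factorizationpattern} to every $M_i$, merges the resulting three-block decompositions into a single global one $(Q_L+\eps D_L)(c+\eps D')(Q_R+\eps D_R)$ by Bennett flips (Lemma~\ref{lem:bennett-flips}), and then repeats the $E_1$/$E_2$ computation of Proposition~\ref{prop:singlepattern} at the global level, invoking Lemmas~\ref{lem:division-property-gcd} and~\ref{lem:weaklemma} once more. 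You instead use Proposition~\ref{prop:singlepattern} as a black box (giving $c_ig_i\mid\QNorm{D_i}$) and run a two-factor induction $M=M'M_r$ with a direct expansion of $\QNorm{D}$. The only delicate point is the cross term $X$, and your treatment is sound: $X=Y+\Cj{Y}\in\R[t]$; $g_r\mid X$ because the real polynomial $g_r$ divides the contiguous block $D_r\Cj{Q}_r$ inside $Y$; and $g'\mid X$ follows from conjugating by $Q'$, using the Study relations (note $\Cj{D}'Q'+\Cj{Q}'D'=0$ is indeed equivalent to $Q'\Cj{D}'+D'\Cj{Q}'=0$, since the scalar part of a product of two quaternion polynomials is invariant under swapping the factors), and cancelling the central nonzero real polynomial $\QNorm{Q'}$. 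Your route buys a shorter, more self-contained reduction that dispenses with Bennett flips and the global three-block decomposition altogether; the paper's route is longer but exhibits the explicit global structure \eqref{eq:final-characterization}, which mirrors the architecture of its constructive algorithms.
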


\begin{proof}
  Let $M = \prod_{i=1}^r (c_iQ_i + \eps D_i)$ be a factorization of $M$ given by Proposition~\ref{prop:alg_mgfactor}. By Lemma~\ref{lem:m_r_admit_a_factorization}, all the factors admit further factorization into monic linear motion polynomials. We apply Lemma~\ref{lem:factorizationpattern} to each factor of $M$ and obtain
  \begin{equation*}
    M = \prod_{i=1}^r(c_iQ_i + \eps D_i) = \prod_{i=1}^r (Q_{iL} + \eps D_{iL}) (c_i + \eps D_i') (Q_{iR} + \eps D_{iR})
  \end{equation*}
  for some quaternionic polynomials $Q_{iL},D_{iL},D'_i,Q_{iR},D_{iR}$ such that $Q_{iL} + \eps D_{iL}$, $c_i + \eps D_i'$, and $Q_{iR} + \eps D_{iR}$ admit factorizations into monic linear factors. Then we can use Bennett flips from Lemma~\ref{lem:bennett-flips} to rearrange these factors in the following way: First we flip the factor $Q_{2L} + \eps D_{2L}$ over to the right of the factor $Q_{1L} + \eps D_{1L}$ and denote the result by $\tilde{Q}_{2L} + \eps \tilde{D}_{2L}$. Next we flip $Q_{3L} + \eps D_{3L}$ over to the right of the flipped factor $\tilde{Q}_{2L} + \eps \tilde{D}_{2L}$ and continue in this way for $i = 4,\ldots ,r$. In the same manner we flip the factors $Q_{iR} + \eps D_{iR}$ to the left of $Q_{1R} + \eps D_{1R}$. This procedure yields
  \begin{equation}\label{eq:final-characterization}
    \begin{aligned}
      M &= \prod_{i=1}^r (Q_{iL} + \eps D_{iL}) (c_i + \eps D_i') (Q_{iR} + \eps D_{iR}) \\
        &= \left( \prod_{i=1}^r (\tilde{Q}_{iL} + \eps \tilde{D}_{iL}) \right) \left(
          \prod_{i=1}^r (c_i + \eps \tilde{D}_i') \right) \left( \prod_{i=1}^r
          (\tilde{Q}_{iR} + \eps \tilde{D}_{iR}) \right) \\
          & \eqqcolon (Q_L + \eps D_L) (c
          + \eps D') (Q_R + \eps D_R),
    \end{aligned}
  \end{equation}
  for some $Q_{L},D_{L},D',Q_{R},D_{R}\in\H[t]$, where $c = \prod_{i=1}^r c_i$, $\QNorm{Q_L} = \prod_{i=1}^r \nu(\tilde{Q}_{iL}) = \prod_{i=1}^r \nu(Q_{iL})$, and $\QNorm{Q_R} = \prod_{i=1}^r \nu(\tilde{Q}_{iR}) = \prod_{i=1}^r \nu(Q_{iR}) $ by Lemma~\ref{lem:bennett-flips}. 
  Use the notation from Lemma~\ref{lem:multiplicativity-gcd}.
  Lemmas~\ref{lem:division-property-gcd} and~\ref{lem:multiplicativity-gcd} imply that $\prod_{i=1}^r g_{iL}$ divides $\QNorm{Q_L}$, and $\prod_{i=1}^r g_{iR}$ divides $\QNorm{Q_R}$. 
Consequently, $g$ divides both $\QNorm{Q_L}$ and $\QNorm{Q_R}$. 
  
  It remains to prove that $E_1$ and $E_2$ given by Eq.~\eqref{eq-E1E2} above are divisible by $g$ and $cg$ respectively. Here $g|E_1$ by Eq.~\eqref{eq-E1}.
From
  Eq.~\eqref{eq:final-characterization} we infer that
  \begin{equation*}
    D' = \sum_{i=1}^r {\tilde{D}_i'}\prod_{\substack{j=1\\ j \neq i}}^r c_j  \quad \text{and} \quad
    \nu(D') = \left( \sum_{i=1}^r {\tilde{D}_i'}\prod_{\substack{j=1\\ j \neq i}}^r c_j \right) \left( \sum_{k=1}^r {\tilde{D}_k^{\prime\ast}}\prod_{\substack{j=1\\ j \neq k}}^r c_j \right).
  \end{equation*}
  We see that $\nu(D')$ is the sum of expressions like
  \begin{equation*}
    \left( {\tilde{D}_i'}\prod_{\substack{j=1\\ j \neq i}}^r c_j \right) \left( {\tilde{D}_k^{\prime\ast}} \prod_{\substack{j=1\\ j \neq k}}^r c_j \right) =
    \begin{cases}
      {\tilde{D}_i' {\tilde{D}_k^{\prime\ast}}c_i c_k}
      \displaystyle\prod_{\substack{j=1\\ j \neq i,k}}^r c_j^2 & \text{if $i \neq k$,} \\
      {\nu(\tilde{D}_i')}
      \displaystyle\prod_{\substack{j=1\\ j \neq i}}^r c_j^2 & \text{if $i = k$}
    \end{cases}
  \end{equation*}
  with $i,k \in \{ 1, 2, \ldots ,r \}$. The polynomials $c_i + \eps
  \tilde{D}_i'$ admit a factorization into monic linear factors. Therefore, $c_i$
  divides $\nu(\tilde{D}_i')$ for $i = 1,2,\ldots ,r$ by Lemma~\ref{lem:weaklemma}
  and each summand of $\nu(D')$, in particular, $\nu(D')$ itself, is divisible by
  $c= \prod_{i=1}^r c_i$. Thus $c g\,|\,E_2 = \QNorm{Q_L} \nu(D')
  \QNorm{Q_R}$.
\end{proof}

Note that Proposition~\ref{prop:final-characterization} agrees with
Proposition~\ref{prop:singlepattern} in the following way. If the norm polynomial of
$M$ has only one irreducible quadratic factor $\QNorm{M} = N^n$, then $g_L =
N^{n_L}$ and $g_R = N^{n_R}$ for some integer $n_L$, $n_R$ and $g =
\mygcd(g_L,g_R)$ is either equal to $g_L$ or $g_R$.

\begin{prop}\label{prop:reverse-final}
  Consider a bounded, monic, reduced motion polynomial $M = c Q + \eps D \in
  \DH[t]$ with $c \in \R[t]$ and $\mrpf(Q) = 1$. Set $g \coloneqq
  \mygcd(c,\Cj{Q}D,D\Cj{Q}) \in \R[t]$. If $cg$ divides $\QNorm{D}$, then $M$ admits
  a factorization into monic linear factors.
\end{prop}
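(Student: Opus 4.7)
The plan is to reduce the general statement to the primary-norm case handled by Proposition~\ref{prop:reverse-singlepattern}, via the decomposition of Proposition~\ref{prop:alg_mgfactor} and an $N_i$-adic valuation analysis.

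First I would apply Proposition~\ref{prop:alg_mgfactor} to write $M = M_1 M_2 \cdots M_r$ with $M_i = c_i Q_i + \varepsilon D_i$, $\nu(M_i) = N_i^{n_i}$ for pairwise coprime irreducible quadratics $N_i \in \R[t]$, and $\mrpf(Q_i) = 1$. A short check using $c = \prod_i c_i$ and $\mrpf(M) = 1$ confirms that each $M_i$ is monic, bounded, and reduced, so the only hypothesis of Proposition~\ref{prop:reverse-singlepattern} that remains to establish for each factor is $c_i g_i \mid \nu(D_i)$, where $g_i \coloneqq \mrpf(c_i, \Cj{Q_i} D_i, D_i \Cj{Q_i})$.

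The heart of the proof is to derive these local divisibilities from the global hypothesis $cg \mid \nu(D)$. By Lemma~\ref{lem:multiplicativity-gcd}, $g = \prod_i g_i$, and since each $c_i g_i$ is a power of $N_i$ with the $N_i$ pairwise coprime, the required divisibility can be checked one $N_i$ at a time. Since Algorithm~\ref{alg:mgfactorization} is free to pick any irreducible factor of $\nu(M)$ first, for each fixed $i$ I would rearrange the decomposition as $M = M' M_i$ with $M' = P' + \varepsilon D'$ and expand
\[
    \nu(D) \;=\; \nu(D_i)\,\nu(P') \;+\; c_i\bigl(P' D_i \Cj{Q_i}\Cj{D'} + D' Q_i \Cj{D_i}\Cj{P'}\bigr) \;+\; N_i^{n_i}\,\nu(D').
\]
Writing $c_i = N_i^{\alpha_i}$ and $g_i = N_i^{\gamma_i}$, so that $\gamma_i \le \alpha_i$ and $2\alpha_i \le n_i$, the first summand has $N_i$-valuation exactly $v_{N_i}(\nu(D_i))$ because $\gcd(N_i, \nu(P'))=1$, while the third has valuation at least $n_i \ge \alpha_i + \gamma_i$. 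For the middle summand, the motion polynomial identity $Q_i\Cj{D_i} + D_i\Cj{Q_i} = 0$ upgrades $g_i \mid D_i\Cj{Q_i}$ to $g_i \mid Q_i\Cj{D_i}$; since $g_i \in \R[t]$, it then factors out of both quaternion products inside the bracket, so together with the $c_i$ in front the middle summand has $N_i$-valuation at least $\alpha_i + \gamma_i$. Comparing with the hypothesis $v_{N_i}(\nu(D)) \ge \alpha_i + \gamma_i$, the first summand cannot be the unique lowest-valuation term, forcing $v_{N_i}(\nu(D_i)) \ge \alpha_i + \gamma_i$, i.e., $c_i g_i \mid \nu(D_i)$.

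With each $M_i$ now satisfying the hypothesis of Proposition~\ref{prop:reverse-singlepattern}, I would apply that proposition to factor each $M_i$ into monic linear motion polynomials and concatenate to obtain the required factorization of $M$. The hard part is controlling the mixed bracket term in the expansion above; the motion polynomial condition on $M_i$ is exactly what supplies the additional $\gamma_i$-divisibility beyond the $\alpha_i$ that comes for free from the $c_i$ factor, and this is what closes the argument.
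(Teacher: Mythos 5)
Your overall strategy is the same as the paper's: decompose $M=M_1\cdots M_r$ into primary-norm factors via Proposition~\ref{prop:alg_mgfactor}, use Lemma~\ref{lem:multiplicativity-gcd} to localize the hypothesis $cg\mid\nu(D)$ to each $N_i$, verify $c_ig_i\mid\nu(D_i)$, and finish with Proposition~\ref{prop:reverse-singlepattern}. Your valuation analysis of the three summands in $\nu(D)=\nu(D_i)\nu(P')+c_i(\cdots)+N_i^{n_i}\nu(D')$ is correct \emph{for a factor sitting at the right end of the product}, and is a clean version of what the paper computes.

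The gap is the sentence ``I would rearrange the decomposition as $M=M'M_i$.'' You may indeed rerun Algorithm~\ref{alg:mgfactorization} picking $N_i$ first, which yields $M=M^{(i)}\hat M_i$ with $\nu(\hat M_i)=N_i^{n_i}$, but the resulting right factor $\hat M_i$ is \emph{not} the factor $M_i$ occurring in the middle of your fixed decomposition $M=M_1\cdots M_r$ (already for $r=2$ and a generic quadratic $M$, swapping the order of extraction conjugates the linear factors). So your argument certifies $c\hat{g}_i\,\hat{c}_i\mid\nu(\hat D_i)$ for $r$ \emph{different} end factors coming from $r$ \emph{different} decompositions, and these cannot be concatenated into a single linear factorization of $M$. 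Fixing this by induction (peel off $M_r$, recurse on $M_1\cdots M_{r-1}$) is also not automatic: you would have to show that the truncated product again satisfies the global hypothesis, and the middle term $c_rP'D_r\Cj{Q_r}\Cj{D'}$ only visibly contributes $N_k^{\alpha_k}$, not $N_k^{\alpha_k+\gamma_k}$, at the other primes. The paper instead keeps $M_j$ in place: it writes $D=A+B+C$ with $A=(\prod_{i<j}c_iQ_i)D_j(\prod_{i>j}c_iQ_i)$ and $B$, $C$ collecting the terms carrying an explicit factor $c_jQ_j$ on the left or right of some $D_i$, and then checks that every cross term of $\nu(D)$ is divisible by $c_jg_j$ --- using that $D_j\Cj{Q_j}$ (respectively $\Cj{Q_j}\cdot(\text{real})\cdot D_j$, after rewriting $A\Cj{B}+B\Cj{A}=\Cj{B}A+\Cj{A}B$) appears as a contiguous block so that $g_{jR}$ (respectively $g_{jL}$) can be extracted. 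You need either this in-place computation or a genuinely justified induction; as written, the middle factors for $r\ge 3$ are not covered.
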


\begin{proof} Use the notation from Lemma~\ref{lem:multiplicativity-gcd}. It suffices to prove that $c_jg_j$ divides $\QNorm{D_j}$ for each $j=1,\dots,r$; then by Proposition~\ref{prop:reverse-singlepattern}, each factor $M_j$, hence $M$ itself,
  admits a factorization into monic linear factors.

  By expanding the product $\prod_{i=1}^r M_i$, we
  have
  \begin{equation*}
    \begin{aligned}
      D &= \sum_{i=1}^r \left( \prod_{k=1}^{i-1} c_kQ_k \right) D_i \left( \prod_{k=i+1}^r c_kQ_k \right) \\
        &= \underbrace{\left(\prod_{i=1}^{j-1} c_iQ_i \right)D_j\left(\prod_{i=j+1}^rc_iQ_i\right)}_{\coloneqq A} + \underbrace{\left(\prod_{i=1}^{j-1}c_iQ_i\right)c_jQ_jX}_{\coloneqq B} + \underbrace{Yc_jQ_j\left(\prod_{i=j+1}^rc_iQ_i\right)}_{\coloneqq C},
    \end{aligned}
  \end{equation*}
  for some quaternion polynomials $X$ and $Y$. Now
  \begin{align*}
    \QNorm{D} &=
    A\Cj{A} + B\Cj{B} + C\Cj{C} + (A\Cj{B} + B\Cj{A}) + A\Cj{C} + C\Cj{A} +
    B\Cj{C} + C\Cj{B}\\
              &=
     A\Cj{A} + B\Cj{B} + C\Cj{C} + (\Cj{B}A + \Cj{A}B) + A\Cj{C} + C\Cj{A} +
    B\Cj{C} + C\Cj{B}.
  \end{align*}
  Clearly, $B\Cj{B}$, $C\Cj{C}$, $B\Cj{C}$, and $C\Cj{B}$ are divisible by
  $c_j^2$ and hence also by $c_jg_j$. Moreover, $A\Cj{C}$ and $C\Cj{A}$ are
  divisible by $c_j\mygcd(D_j\Cj{Q_j})$, hence by $c_jg_j$. Analogously, $\Cj{B}A$ and $\Cj{A}B$ are divisible by $c_jg_j$. If $cg$ divides $\QNorm{D}$, then $c_jg_j$ divides $\QNorm{D}$ by Lemma~\ref{lem:multiplicativity-gcd}. Thus $A\Cj{A}=\QNorm{D_j}\prod_{\substack{i=1\\ i \neq j}}^r c_i^2 \QNorm{Q_i}$ is divisible by $c_jg_j$. Since the product here is coprime with $c_jg_j$, it follows that  $c_jg_j$ divides $\QNorm{D_j}$, as required.
\end{proof}

\begin{proof}[Proof of Theorem~\ref{th:bounded}]
    This follows directly from Propositions~\ref{prop:final-characterization}--\ref{prop:reverse-final}.
\end{proof}

\subsection{A Comprehensive Example}
\label{sec:example}

Notice that Algorithm~\ref{alg:singlepattern} for finding factorizations with
monic linear factors is different from Algorithm~3 in \cite{li19jsc}.

Let us illustrate the superiority of Algorithm~\ref{alg:singlepattern} over
\cite[Algorithm~3]{li19jsc} by a comprehensive example. We wish to factor the
motion polynomial $M = P + \eps D$ where
\begin{equation}
  \label{eq:7}
    P = (t^2+1)(t-\qi)^2,\quad
    D = \qi(t-\qi)^2.
\end{equation}
It is a Kautny motion of frequency $n = \frac{1}{2}$; cf.~\cite{kautny56}.

Algorithm~3 of \cite{li19jsc} yields a factorization of
$(t^2+1)M$ into a product of six linear motion polynomials. As we shall see, our
Algorithm~\ref{alg:singlepattern} is capable of decomposing $M$ directly,
without the need to multiply with a real polynomial, into four linear motion
polynomial factors. Note that the presence of the right factor $(t-\qi)^2$ is
not helpful in either algorithm.

We follow the steps of Algorithm~\ref{alg:singlepattern}:
\begin{itemize}
\item Since $\QNorm{M}=(t^2+1)^4$, $c=\mrpf P = t^2+1$, $Q=(t-\qi)^2$, $D=\qi(t-\qi)^2$, $g=
  \mygcd(c,\Cj{Q}D,\Cj{D}Q)=t^2+1$, and $cg$ divides $\nu(D)$, we can apply
  Algorithm~\ref{alg:singlepattern}.
\item We get $Q_L=t-\qi$ and $D_L=\qj a+\qk b$ for two
  arbitrary real numbers $(a, b)\ne (0,0)$. \item We get $D'_R=\frac{c\Cj{D}_LQ}{g}+\frac{\Cj{Q}_LD}{g}=-(a \qj + b\qk) t^2 + (\qi -
  2a\qk + 2 b\qj) t + a\qj + b\qk + 1$ and $Q_R=t-\qi$.
\item We find
$Q_c = \lgcd(c,D'_R) =
    t + \frac{1}{4a^2+4b^2+1}((4a^2+4b^2-1)\qi - 4b\qj + 4a\qk)$
and 
  \begin{equation*}
    \begin{aligned}
        M_r=\Cj{Q_c}Q_R + \eps \frac{\Cj{Q}_cD'_R}{c} 
&=t^2 - \frac{ 8 (a^2 + b^2) \qi + 4 a \qk - 4 b \qj} {4 a^2 + 4 b^2 + 1}t -1+ \frac{ 4 a \qj + 4 b \qk +2}{4 a^2 + 4 b^2 + 1} \\
            &\quad  -\eps(a\qj+b\qk)t
            + \eps\frac{\qi + (4a^2+4b^2+3)(b\qj - a\qk)}{4a^2+4b^2+1}. \\
\end{aligned}
  \end{equation*}
\item By Algorithm~\ref{galg}, $M_r=\left(\Cj{Q_c}-\eps\tfrac{(4a^2+4b^2+1)(a\qj + b\qk)}{4 (a^2 + b^2)}\right)\left(Q_R + \eps \frac{a\qj +b\qk}{4 (a^2 + b^2)}\right)$.
\item Finally, we have $M_L=Q_L+\eps D_L$, $M_C=Q_c 
         \left(\Cj{Q}_c-\eps\tfrac{(4a^2+4b^2+1)(a\qj + b\qk)}{4 (a^2 + b^2)}\right)$, and $M_R=t-\qi+\eps\frac{a\qj +b\qk}{4 (a^2 + b^2)}$, and 
the desired factorization
      \begin{multline*}
          (t^2+1)(t-\qi)^2+\eps\qi(t-\qi)^2=\left(t-\qi+\eps(\qj a+\qk b)\right)
          \left( t + \tfrac{(4a^2+4b^2-1)\qi - 4b\qj + 4a\qk}{4a^2+4b^2+1}\right)\\
          \left( t - \tfrac{(4a^2+4b^2-1)\qi - 4b\qj + 4a\qk}{4a^2+4b^2+1}-\eps\tfrac{(4a^2+4b^2+1)(a\qj + b\qk)}{4 (a^2 + b^2)}\right)
          \left(t-\qi+\eps\tfrac{a\qj +b\qk}{4 (a^2 + b^2)}\right).
      \end{multline*}
      For instance, for $a=1$ and $b=0$, we get
      \begin{multline*}
          (t^2+1)(t-\qi)^2+\eps\qi(t-\qi)^2=\left(t-\qi+\eps\qj\right)
          \left( t + \tfrac{3\qi+ 4\qk}{5}\right)
          \left( t - \tfrac{3\qi+4\qk}{5}-\eps\tfrac{5\qj}{4}\right)
          \left(t-\qi+\eps\tfrac{\qj}{4}\right).
      \end{multline*}
\end{itemize}

{

\section{Another proof of Theorem~\ref{th:bounded}}\label{ap:simpler}

Now we give an alternative proof of the main result by induction and an alternative factorization algorithm. This section is independent of the previous one, besides applying known Theorem~\ref{thm:alg_galg} stated there. Throughout this section, for nonzero $X\in\H[t]$ and $N\in\R[t]$, denote
 by $\tau(X)$ the maximal integer $\tau$ such that~$N^\tau|X$.

\begin{proof}[Proof of Theorem~\ref{th:bounded}]
 [The `if' part]
 We give proof by induction on the degree of $P:=cQ$. If $c=1$, then just apply known Theorem~\ref{thm:alg_galg}. It suffices to prove that for
 $c\neq 1$ we can write $M=(t-p-\eps q)M_1$ or $M=M_1(t-p-\eps q)$ for some
$p, q\in\H$ such that  $q$ is vectorial, $\Cj{p}q=qp$ and some motion
 polynomial $M_1$ still satisfying the assumptions of the theorem.

 Pick a monic real irreducible divisor $N$ of $c$. Without loss of
 generality, assume $\tau(D\Cj{P})\geq \tau(\Cj{P}D)$, otherwise replace $M$ by
 $\Cj{M}$.

 Since $N|c$ and $c|\Cj{D}D$,  by Lemma~\ref{lem:g-a-b-lemma} it follows that $D=(t-p)D_1$ for some $p\in\H$ and
 $D_1 \in \H[t]$, where $\nu(t-p)=N$. Moreover, we have $P=(t-p)P_1$ for some
 $P_1\in\H[t]$. Set $M_1=P_1+\eps(D_1+qP/N)$, where $q\in\H$ is going to be specified below, so that $M=(t-p-\eps q)M_1$.
 Define $c_1$ and $g_1$ for $M_1$ analogously to $c$ and $g$.

 Case 1: $\tau(P_1)<\tau(P)$ or $\tau(\Cj{P}D)\le2\tau(P)$. In this case, we set
 $q=0$. We get
  \begin{gather*}
   P =(t-p)P_1, \quad \Cj{P}D=\Cj{P_1}(t-\Cj{p})(t-p)D_1=N\Cj{P_1}D_1, \quad D\Cj{P}=(t-p)D_1\Cj{P_1}(t-\Cj{p}), \\
   cg=\mygcd\left(c^2,\Cj{P}D,D\Cj{P}\right), \quad
   c_1g_1 = \mygcd\left(c_1^2, \Cj{P_1}D_1,D_1\Cj{P_1}\right),
  \end{gather*}
where $c_1g_1|cg|\nu(D)$ and $\nu(D)=N\nu(D_1)$. It remains to prove that $\tau(c_1g_1)<\tau(cg)$. By the assumption $\tau(D\Cj{P})\geq \tau(\Cj{P}D)=\tau(\Cj{P_1}D_1)+1$, we get
\begin{align*}
 \tau(cg)& =\min\{2\tau(P),\tau(\Cj{P}D),\tau(D\Cj{P})\}  \\
 & =\min\{2\tau(P),\tau(\Cj{P_1}D_1)+1\}>\min\{2\tau(P_1),\tau(\Cj{P_1}D_1)\}\geq\tau(c_1g_1)
\end{align*}
because one of the inequalities $\tau(P_1)<\tau(P)$ or $\tau(\Cj{P}D)\leq
2\tau(P)$ holds. We get $c_1g_1| D_1\Cj{D_1}$, hence  
$M_1$ satisfies the assumptions of the theorem and $M=(t-p)M_1$.

Case 2: $\tau(P_1)=\tau(P)$ and $\tau(\Cj{P}D) \geq 2\tau(P)+1$. In this case,
  we set $q$ to be any nonzero vectorial quaternion such that $\Cj{p}q=qp$. In other words, $q=pv-vp$ for some vectorial quaternion $v$ that does not commute with $p$. 
  
  This case is reduced to the previous one by the transformation $M\mapsto (1+\eps v)M(1-\eps v)$. 
  Indeed, the transformation preserves the primary part $P$ and takes $D$ to $D+vP-Pv$. Thus 
  \begin{align*}
  \Cj{P}D&\mapsto \Cj{P}D+\Cj{P}vP-\Cj{P}Pv,\\
  D\Cj{P}&\mapsto D\Cj{P}+vP\Cj{P}-Pv\Cj{P},\\
  D\Cj{D}&\mapsto D\Cj{D}+D\Cj{P}\Cj{v}+vP\Cj{D}
        -(D\Cj{v}\Cj{P}+Pv\Cj{D})+(vP-Pv)\Cj{(vP-Pv)}\\
        &=D\Cj{D}+D\Cj{P}\Cj{v}+vP\Cj{D}
        -(\Cj{P}D\Cj{v}+v\Cj{D}P)+(vP-Pv)\Cj{(vP-Pv)}.
  \end{align*}
  Since $c^2|\Cj{P}P,\Cj{P}vP,(vP-Pv)\Cj{(vP-Pv)}$ and $cg|c^2,D\Cj{P},\Cj{P}D$, it follows that the transformation preserves $c$, $g$, the condition $cg|D\Cj{D}$, and thus the assumptions of the theorem.  
  
  Let us show that $\tau(\Cj{P}D+\Cj{P}vP-\Cj{P}Pv)=2\tau(P).$ The condition $\tau(P_1)=\tau(P)$ implies that $Q=(t-p)Q_1$ for some $Q_1\in\H[t]$.
  Applying Lemma~\ref{lem:a-b-c-lemma} twice, we conclude 
  that $N$ does not divide $\Cj{Q_1} (t-\Cj{p})v(t-p)Q_1$ because 
  $N\not|Q=(t-p)Q_1$ and $vp-pv\ne 0$. Thus
\begin{align*}
\tau(\Cj{P}D)&>2\tau(P),\\
\tau(P\Cj{P})&=1+\tau(P_1\Cj{P_1})>2\tau(P_1)=2\tau(P),\\
\tau(\Cj{P}vP)&=2\tau(P)+\tau(\Cj{Q_1} (t-\Cj{p})v(t-p)Q_1)=2\tau(P), 
        \text{ and hence}\\
\tau(\Cj{P}D+\Cj{P}vP-\Cj{P}Pv) &=2\tau(P)
\le\tau(D\Cj{P}+vP\Cj{P}-Pv\Cj{P}). 
\end{align*}

 By Case~1, we get $(1+\eps v)M(1-\eps v)=(t-p)M_1'$ for some $M_1'$ satisfying the assumptions of the theorem. Then $M=(t-p-\eps(pv-vp))(1-\eps v)M_1'(1+\eps v)=:(t-p-\eps(pv-vp))M_1$, where $M_1$ also satisfies the assumptions of the theorem, as required.

[The `only if' part] 
Induction on the degree of $P:=cQ$. It suffices to prove that
if a bounded monic reduced motion polynomial $M_1$ satisfies the condition $c_1g_1|\nu(D_1)$ and $t-p-\eps q$ is a bounded linear motion polynomial, then $M=M_1(t-p-\eps q)$ satisfies the condition $cg|\nu(D)$, once $M$ is also reduced. Here $g_1,c_1,D_1,Q_1$ for $M_1$ are defined analogously to $g,c,D,Q$. Use notation $N:=\nu(t-p)$ as above.

We may assume that $q=0$ by performing the transformation $X\mapsto (1+\eps v)X(1-\eps v)$, with $v=(p^*-p)^{-1}q$, of all the involved motion polynomials $X$. (A geometric explanation: The linear motion polynomial $t-p-\eps q$ describes a rotation. Via a suitable change of coordinates, we may assume that the rotation axis contains the origin whence $q=0$.)

Case 1: $Q_1$ is not right-divisible by $t-p^*$. Then $c=c_1$,
\begin{align*}
g_1&=\mathrm{realgcd}(c_1,Q_1^*D_1,D_1Q_1^*),\\
g&=\mathrm{realgcd}(c_1,(t-p^*)Q_1^*D_1(t-p),ND_1Q_1^*)\,|\,g_1N^2.
\end{align*}
Let us prove that $\tau(g)\le \tau(g_1)+1$; then
$cg\,|\,c_1g_1N\,|\,N\nu(D_1)=\nu(D)$ as required.

Assume that 
$N\,|\,c_1$; otherwise $\tau(g)=0$ and we are done.
Then  $N\,\not|\,D_1(t-p)$;
otherwise $N\,|\,M=c_1Q_1(t-p)+\eps D_1(t-p)$ and $M$ is not reduced.
In particular, $N\,\not|\,Q_1,D_1$. 
By Lemma~\ref{lem:g-a-b-lemma}, we get $Q_1=LQ_2$, $D_1=LD_2$, and $N\,\not|\,Q_2^*D_2$ for some $L,Q_2,D_2\in\H[t]$.

The desired inequality $\tau(g)\le \tau(g_1)+1$ is going to follow from $N^2\,\not|\,(t-p^*)Q_2^*D_2(t-p)$. If $N\,\not|\,(t-p^*)Q_2^*D_2$ or $N\,\not|\,Q_2^*D_2(t-p)$, then we are done. Thus assume $N\,|\,(t-p^*)Q_2^*D_2$ and $N\,|\,Q_2^*D_2(t-p)$.
Then $N\,|\,\nu(Q_2^*D_2)$, hence 
$N\,|\,\QNorm{Q_2}$ or $N\,|\,\QNorm{D_2}$.
Assume that $N\,|\,\nu(D_2)$;
the case $N\,|\,\nu(Q_2)$ is analogous. Then by Lemma~\ref{lem:a-b-c-lemma} we have $N\,|\,Q_2^*D_2$ or $N\,|\,D_2(t-p)$. The former possibility contradicts the choice of $Q_2$ and $D_2$, and the latter one contradicts $N\,\not|\,D_1(t-p)=LD_2(t-p)$.
This proves that $\tau(g)\le \tau(g_1)+1$ and completes Case~1.

Case 2: $Q_1$ is right-divisible by $t-p^*$. Then $c=Nc_1$, $Q_1=Q(t-p^*)$,
\begin{align*}
g_1&=\mathrm{realgcd}(c_1,(t-p)Q^*D_1,D_1(t-p)Q^*),\\
g&=\mathrm{realgcd}(Nc_1,Q^*D_1(t-p),D_1(t-p)Q^*)\,|\,g_1N.
\end{align*}
Here $N\,\not|\,D_1(t-p)\Cj{Q}$; otherwise $M$ or $Q_1$ is not reduced by Lemma~\ref{lem:a-b-c-lemma}.
Hence $N\,\not|\,g$ and $cg\,|\,c_1g_1N\,|\,N\nu(D_1)=\nu(D)$ as required. 
\end{proof}

\begin{algorithm}
  \caption{\texttt{Factor4}: Recursive factorization algorithm}
  \label{alg:recursive}
  \begin{algorithmic}[1]
    \Require 
    A bounded monic reduced motion polynomial $M=P+\eps D \in \D\H[t]$ such that $\mygcd(\mrpf(P)^2, \Cj{P}D, D\Cj{P})|\Cj{D}D$.

    \Ensure A list $[L_1,\ldots,L_n]$ of monic linear motion polynomials such that
    $M=L_1\cdots L_n$.
\If{$\deg \mrpf(P) = 0$}
      \State \Return $[\mathtt{GFactor}(M)]$
    \Else
      \State $N \adef$ any quadratic real factor of $\mrpf(P)$ \State $\tau(X):=$ the maximal $\tau$ such that $N^\tau|X$.  
      \If  {$\tau(D\Cj{P}) < \tau(\Cj{P}D)$} 
      \State \Return $[\texttt{Factor4}(\Cj{M})]^\ast$ (the list with the order reversed and elements conjugated)
      \Else
      \State $p \adef$ any quaternion such that $(t-p)(t-\Cj{p})=N$ and $D$ is left-divisible by $t-p$ \State $P_1 \adef (t-p)^{-1}P$, $D_1 \adef (t-p)^{-1}D$
\If {$\tau(P_1)<\tau(P)$ or $\tau(\Cj{P}D)\le 2\tau(P)$}
      \State $M_1\adef P_1+\eps D_1$  
      \State \Return $[t-p,\texttt{Factor4}(M_1)]$
      \Else \State $v \adef$ any vectorial quaternion that does not commute with $p$
      \State $q \adef pv-vp$
      \State $M_1\adef P_1+\eps (D_1+qP/N)$  
      \State \Return $[t-p-\eps q,\texttt{Factor4}(M_1)]$
      \EndIf
      \EndIf
    \EndIf

  \end{algorithmic}
\end{algorithm}

The `if' part in this proof gives rise to recursive
Algorithm~\ref{alg:recursive} and also constitutes the proof of its correctness. The resulting algorithm is apparently very different from Algorithms~\ref{alg:mgfactorization} and~\ref{alg:singlepattern} from the previous section. However, they still can output the same linear factors in many cases. For instance, for the example in Section~\ref{sec:example} and appropriate choices of $N$ and $q$, Algorithm~\ref{alg:recursive} ends up with the same factorization.

}

\begin{cor}\label{cor:reverse-final}
  Consider a bounded, monic, reduced motion polynomial $M = c Q + \eps D \in
  \DH[t]$ with $c \in \R[t]$ and $\mrpf(Q) = 1$. Set $g \coloneqq
  \mygcd(c,\Cj{Q}D,D\Cj{Q}) \in \R[t]$ and $g' \coloneqq
  \frac{cg}{\mygcd(cg,\nu{(D)})} \in \R[t]$. Then $Mg'$ admits a factorization
  into monic linear factors.
\end{cor}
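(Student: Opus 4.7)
The plan is to reduce to the primary-norm case via Proposition~\ref{prop:alg_mgfactor} and then construct the factorization explicitly in the primary case.

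First I would write $M = M_1 M_2 \cdots M_r$ with $\nu(M_i) = N_i^{n_i}$ primary (Proposition~\ref{prop:alg_mgfactor}) and use Lemma~\ref{lem:multiplicativity-gcd} to obtain $c = \prod_i c_i$ and $g = \prod_i g_i$. The key arithmetic identity is $\mygcd(cg, \nu(D)) = \prod_i \mygcd(c_i g_i, \nu(D_i))$, which I would derive from the congruence $\nu(D) \equiv \nu(D_j) \prod_{i \neq j} c_i^2 \nu(Q_i) \pmod{c_j g_j}$ established inside the proof of Proposition~\ref{prop:reverse-final}, combined with the pairwise coprimality of the factors $c_j g_j$ (each a power of a distinct $N_j$). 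Consequently $g' = \prod_i g_i'$ where $g_i' = c_i g_i/\mygcd(c_i g_i, \nu(D_i))$; since each $g_i' \in \R[t]$ commutes with every quaternion polynomial, we have $Mg' = \prod_i (M_i g_i')$, and it suffices to establish the corollary for each primary-norm factor $M_i g_i'$ individually.

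Dropping subscripts in the primary case, let $c = N^m$, $g = N^k$, and $g' = N^e$ with $e = \max(0, m + k - \nu_N(\nu(D)))$. When $e = 0$, the divisibility $cg \mid \nu(D)$ already holds and Proposition~\ref{prop:reverse-singlepattern} delivers the factorization of $M = Mg'$.

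The main obstacle is the case $e > 0$: $Mg' = MN^e$ fails to be reduced (its $\mrpf$ equals $N^e$), so Theorem~\ref{th:bounded} does not apply directly; indeed, a naive application of the factorability condition to $Mg'$ reduces back to $cg \mid \nu(D)$, which fails by assumption. My plan for this case is to construct the factorization explicitly by writing $Mg' = L_L \cdot M^\ast \cdot L_R$, where $L_L$ and $L_R$ are each products of $e$ linear motion polynomials of norm $N$ bracketing a middle motion polynomial $M^\ast$ of degree $n$ whose primal part has trivial $\mrpf$; since $M^\ast$ is then generic, it factors into linear motion polynomials by Theorem~\ref{thm:alg_galg}. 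Example~\ref{ex:multiplication-trick} exhibits precisely this pattern. The technical crux, which I expect to be the hard part, is showing that the $2e$ vectorial quaternions in $L_L$ and $L_R$ can be chosen so that the middle factor $M^\ast$ is simultaneously a motion polynomial and generic; this amounts to fully absorbing the non-generic primal factor $N^{m+e}$ of $Mg'$ into the outer linear factors, and I anticipate it requires an iterative peeling-off argument controlling divisibility at each step, in the spirit of Algorithm~\ref{alg:recursive}.
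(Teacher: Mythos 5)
Your reduction to the primary-norm case is sound: the identity $\mygcd(cg,\nu(D))=\prod_i\mygcd(c_ig_i,\nu(D_i))$ does follow from the congruence $\nu(D)\equiv\nu(D_j)\prod_{i\neq j}c_i^2\nu(Q_i)\pmod{c_jg_j}$ established in the proof of Proposition~\ref{prop:reverse-final}, together with the pairwise coprimality of the $c_jg_j$; hence $g'=\prod_ig_i'$, $Mg'=\prod_i(M_ig_i')$, and it is legitimate to treat each primary factor separately. (The paper does not need this reduction: its proof of the corollary inducts on $\deg g'$ for a general $M$ directly.) The subcase $e=0$ is also fine via Proposition~\ref{prop:reverse-singlepattern} or Theorem~\ref{th:bounded}.

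The problem is that the case $e>0$ is the entire content of the corollary beyond Theorem~\ref{th:bounded}, and you do not prove it: you only announce a target decomposition $Mg'=L_L\,M^\ast\,L_R$ with a \emph{generic} middle factor $M^\ast$ of degree $n$, and you explicitly defer the ``technical crux'' of showing the $2e$ outer linear factors can be chosen so that $M^\ast$ is simultaneously a motion polynomial and generic. That existence claim is nontrivial and is not even the shape the paper's argument produces: there the middle factor has degree $n+e$ and is \emph{not} generic, but merely satisfies the divisibility criterion of Theorem~\ref{th:bounded}. The missing idea is a single-step reduction of $\deg g'$: pick an irreducible factor $N$ of $g'$, assume without loss of generality $\tau(\Cj{Q}D)\le\tau(D\Cj{Q})$, and choose $p$ with $\nu(t-p)=N$ such that $N$ does not divide $(t-\Cj{p})\bigl(\Cj{Q}D/\mrpf(\Cj{Q}D)\bigr)(t-p)$; the paper shows such $p$ exists by examining the remainder $qt+r$ of $\Cj{Q}D/\mrpf(\Cj{Q}D)$ modulo $N$ and arranging $p(pq+r)\ne(pq+r)p$. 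Then $M(t-p)$ has the same $c$ and $g$ as $M$ while its dual-part norm gains a factor $N$, so its co-factor is $g'/N$; since $Mg'=\bigl(M(t-p)\bigr)\,(g'/N)\,(t-\Cj{p})$, induction on $\deg g'$ with base case Theorem~\ref{th:bounded} finishes the proof. Without this step, or an actual proof of your proposed absorption claim, the argument is incomplete.
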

\begin{proof}
  The proof is by induction on $\deg g'$, which is an even number because $M$ is bounded. 
  
  The base $\deg g'=0$ is Theorem~\ref{th:bounded}. 
   
  To perform the induction step, take a bounded monic reduced motion polynomial $M_1$ such that $\deg g_1'\ne 0$ Here $g_1',g_1,c_1,D_1,Q_1$ for $M_1$ are defined analogously to $g',g,c,D,Q$. Pick a monic real quadratic irreducible factor $N$ of $g'_1$. Assume without loss of generality that $\tau(\Cj{Q_1}D_1) \leq \tau(D_1\Cj{Q_1})$, because $M_1g_1'$ admits a factorization if and only if $\Cj{M_1}g_1'$ does, and the conjugation of $M_1$ interchanges $\Cj{Q_1}D_1$ and $D_1\Cj{Q_1}$. Divide the ratio $\Cj{Q_1}D_1/\mrpf(\Cj{Q_1}D_1)$ by $N$ with a remainder $qt+r$. Since $M_1$ is a motion polynomial, it follows that $\Cj{Q_1}D_1+\Cj{D_1}Q_1=0$, hence $q$ and $r$ are vectorial quaternions not vanishing simultaneously. There are infinitely many quaternions $p$ such that $\nu(t-p)=N$ and $p(pq+r)\ne (pq+r)p$. Indeed, consider the vectorial quaternion $r':=r+q(p+\Cj{p})/2$ determined by $q$, $r$, and $\nu(t-p)=N$. 
  Take the vectorial part of $p$ to be orthogonal to $q$ if $r'=0$ or $q$ is non-orthogonal to $r'$, orthogonal to $r'$ if $q=0$, and orthogonal to neither $q$ nor $r'$ if $q\ne 0$ is orthogonal to $r'\ne 0$. Then $p(pq+r)-(pq+r)p$ is not orthogonal to $r'$ (for $q,r'\ne 0$) and hence nonzero. Among the infinitely many $p$, pick one such that $t-\Cj{p}$ is not a right factor of $Q_1$ and $D_1$. 

  Take $M:=M_1(t-p)$. We have $\mrpf(\Cj{Q}D)=\mrpf(\Cj{Q_1}D_1)$
  because 
  \begin{align*}
  \frac{\Cj{Q}D}{\mrpf(\Cj{Q_1}D_1)}
  &=(t-\Cj{p})\frac{\Cj{Q_1}D_1}{\mrpf(\Cj{Q_1}D_1)}(t-p)=(t-\Cj{p})(qt+r)(t-p)\\
  &=(t-\Cj{p})((t-p)q+pq+r)(t-p)=(t-\Cj{p})(pq+r)(t-p)\\
  &=(t-\Cj{p})(p(pq+r)-(pq+r)p)\ne 0\mod N.
  \end{align*}
  Hence $c=c_1$, $g=g_1$, and $g'=g_1'/N$. By the inductive hypothesis, $Mg'$ admits a factorization into monic linear factors. Hence $M_1g_1'=Mg'(t-\Cj{p})$ does.
\end{proof}

A natural open question is if the reciprocal assertion holds in the sense that the polynomial~$g'$ divides any real polynomial $S$ such that $MS$ admits a factorization into monic linear factors. It is also interesting to generalize Corollary~\ref{cor:reverse-final} to non-reduced motion polynomials $M$.

\section{Conclusion}
\label{sec:conclusion}

Motion polynomials can describe rational motions but a rational motion
determines the underlying motion polynomial only up to real polynomial factors
\cite{li16}. Therefore, it is natural to consider reduced motion polynomials,
which admit a factorization into linear factors. In this paper, we first gave a
necessary condition (Proposition~\ref{prop:final-characterization}) for a reduced
monic motion polynomial to admit such a decomposition. This condition is also
sufficient (Proposition~\ref{prop:reverse-final}) and can easily be verified.
Furthermore, we provided a constructive algorithm for finding a factorization
when the necessary condition is fulfilled. This also reveals an improved method
to find a real polynomial such that the product with the given motion polynomial
admits a factorization (Corollary~\ref{cor:reverse-final}).

Factorizations into linear factors lie at the core of some mechanism
constructions, for example, the universal construction of Kempe linkages of
\cite{li17} for rational trajectories where the numbers of links and joints are
bounded by a linear function in the curve degree. In that paper, the authors were
interested in linkages with a prescribed rational trajectory and could construct
``tame'' motion polynomials (essentially having $g = 1$ in
Propositions~\ref{prop:final-characterization} and~\ref{prop:reverse-final}). Using the
result herein it is possible to construct linkages with even fewer links and
joints to follow a rational space motion.

The procedure, even if spread about several sub-algorithms, is basically rather
simple, only involving dual quaternion algebra, $\mygcd$-computations and polynomial division. It is symbolic but extensions to numeric input data should
be possible using algorithms for approximate $\mygcd$-computations \cite{qrgcd04,kaltofen06,pan2001gcd}.

Having characterized factorizability in the motion polynomial case, a natural
next step is the extension of our results to \emph{spinor polynomials}. These
generalize the concept of motion polynomial to Clifford algebras and can be used
to describe rational motion in groups different from~$\SE$. The behaviour of
spinor polynomials is similar to motion polynomials: Factorizations exist
generically but not generally. In conformal three-dimensional geometric algebra,
suitable polynomial multiples of spinor polynomials always admit factorizations
\cite{li23}. Identifying factorizability conditions for spinor polynomials along the lines of
our Proposition~\ref{prop:final-characterization} is an interesting open problem. 

Polynomials over the dual quaternions can also be viewed as quaternion polynomials in two variables $t$ and $\varepsilon$ considered up to adding a multiple of $\varepsilon^2$: $\DH[t]=\H[t,\varepsilon]/(\varepsilon^2)$. So, yet another direction is to extend our results to quaternion polynomials in two variables.

\section*{Acknowledgments}
Zijia Li is supported by the National Key R$\&$D Program of China (2023YFA1009401) and partially supported by the Strategic Priority Research Program of the Chinese Academy of Sciences 0640000 \& XDB0640200 and the Guangdong Basic and Applied Basic Research Foundation under Grant 2024A1515010506. Mikhail Skopenkov is supported in part by the KAUST baseline. Daniel Scharler was supported by the Austrian Science Fund (FWF) P~33397-N (Rotor
Polynomials: Algebra and Geometry of Conformal Motions). The first two authors acknowledge the support of the Institut Henri Poincar\'e (UAR 839 CNRS-Sorbonne Universit\'e) and LabEx CARMIN (ANR-10-LABX-59-01). 
 
\bibliographystyle{plainnat}

\end{document}